\numberwithin{equation}{section}
\def\E{\mathbb{E}}
\newtheorem{theorem}{Theorem}
\newtheorem{lemma}{Lemma}
\newtheorem{proposition}{Proposition}
\newtheorem{remark}{Remark}
\newtheorem{definition}{Definition}
\begin{document}
\title{On the mean-field limit of the Cucker-Smale model with Random Batch Method}
        \author[a]{Yuelin Wang \thanks{sjtu$\_$wyl@sjtu.edu.cn}}
        \author[a]{Yiwen Lin \thanks{linyiwen@sjtu.edu.cn}}
\affil[a]{School of Mathematical Sciences, MOE-LSC, Shanghai Jiao Tong University, Shanghai, 200240, P.R.China }
\date{}
\maketitle

\begin{abstract}
    In this work, we focus on the mean-field limit of the Random Batch Method (RBM) for the Cucker-Smale model. Different from the classical mean-field limit analysis, the chaos in this model is imposed at discrete time and is propagated to discrete time flux. We approach separately the limits of the number of particles $N\to\infty$ and the discrete time interval $\tau\to 0$ with respect to the RBM, by using the flocking property of the Cucker-Smale model and the observation in combinatorics. The Wasserstein distance is used to quantify the difference between the approximation limit and the original mean-field limit. Also, we combine the RBM with generalized Polynomial Chaos (gPC) expansion and proposed the RBM-gPC method to approximate stochastic mean-field equations, which conserves positivity and momentum of the mean-field limit with random inputs.\\
    
    \textbf{Keywords:} Random Batch Method, mean-field limit, Cucker-Samle model, stochastic Galerkin.

\end{abstract}

\section{Introduction}

    Collective behaviors of many-body systems are ubiquitous in the real world, like  flocking of birds \cite{ha2009simple,Ha2008particle,cucker2007emergent,motsch2011new}, swarming of fishes \cite{toner1998flocks}, synchronicity of fireflies \cite{kuramoto1975self,buck1966biology} and pacemaker cells \cite{peskin1975mathematical}. We use the jargon ``flocking" to describe the phenomenon in which self-propelled particles organize into an ordered motion using only limited environmental information and simple rules \cite{toner1998flocks}. Since there is a great deal of literature on collective behaviors and related models, we refer the readers to \cite{degond2008continuum,justh2002simple,albi2019vehicular,bellomo2017quest,bellomo2012mathematical,vicsek1995novel,ha2016collective,dorfler2014synchronization,topaz2004swarming} and the references therein.

    The Cucker-Smale model, proposed by Cucker and Smale \cite{cucker2007emergent}, is a famous model of collective behaviors that models phenomenologically  the flocking phenomenon. It is in the form of an $N$-body second-order system of ordinary differential equations or particle systems  for position and velocity, which resemble Newton's equations.  

     In addition, the mean-field limit, as the number of particles $N \to \infty$,  is also a prevalent tool for simplifying large particle systems, which makes use of continuum models derived within the framework of mathematical kinetic theory, to approximate collective behaviors. In these years, there has been extensive research on the mean-field limit of particle systems and we refer to \cite{jabin2014review,chaintron2022propagation,chaintron2022propagation2} for survey articles.
     
    As a classical type of $N$-body systems, in the Cucker-Smale model, each particle interacts with $N-1$ particles, and thus the computational cost to solve it is $\mathcal{O}(N^2)$ per time step. An efficient algorithm to reduce computational complexity to $\mathcal{O}(N)$ is the Random Batch Method (RBM) proposed by Jin et al. \cite{jin2020random} in 2020. The RBM indeed constructs a randomly decoupled system with subsystems of interaction between $p$ particles, where the constant $p\ll N$. In this system, at each time step, the interaction only occurs within small batches with $p$ particles. The choice of batches is random and thus the time-average effect makes it  a good approximation of the original system \cite{jin2020random,ha2021uniform,jin2022random}.

    An interesting problem is to understand the mean-field limit of the RBM. In 2022, Jin and Li considered the mean-field limit of the RBM on first-order systems with Gaussian noise in \cite{jin2022mean}. 
    Inspired by \cite{jin2022mean}, in this paper, we investigate the mean-field limit of the RBM on the Cucker-Smale model as the number of particles tends to infinity. Unlike the first-order system considered in \cite{jin2022mean}, the Cucker-Smale model is a second-order system with interaction coupled with both velocity and position. Technically, moment control needs to be  derived by the asymptotic flock estimate instead of using the contraction property in first-order systems in \cite{jin2020random,jin2022random}. Specifically, we mainly analyze the path from (b) to (c), and briefly discuss the path from (a) to (d) in the following figure \eqref{graph}. On the upper left corner, Ha et al. give an estimate on (a) in \cite{ha2021uniform} and a proof of the existence of mean-field limit on (d) in \cite{ha2018uniform}. Their analysis on (d) is uniform in time but implicit in $N.$ In later work \cite{natalini2020mean}, an explicit estimate on $N$ is given but only works on finite time. The four models in \eqref{graph} are represented in Section \ref{sec:prelimiaries}.
    
    \begin{equation}\label{graph}
        \begin{CD}
        \text{Cucker-Smale system} \,\eqref{eq: cs} @ > (d)\, N\to\infty >> \text{mean-field limit} \,\eqref{eq: mfl for cs}\\
         @A (a)\, \tau\to 0 AA  @AA (c)\, \tau\to 0 A\\
        \text{Random Batch Cucker-Smale model} \,\eqref{eq: the RBMcs} @>> (b)\, N\to\infty > \text{mean-field limit of RBM} \,\eqref{eq: the RBM limit discrete}
        \end{CD}
    \end{equation}

   With the justification of (b) and (c),   we employ the RBM, which is an efficient particle-based numerical scheme for the approximation of stochastic mean-field equations of the Cucker-Smale model, inspired by \cite{carrillo2019particle}. We consider such an application in the stochastic setting, where the interacting force depends on a random variable modeling uncertainties.  This is an important problem in Uncertainty Quantification \cite{jin2017uncertainty}. The method combines the RBM with a generalized Polynomial Chaos (gPC) expansion in the random space and thus we call it the RBM-gPC. It avoids loss of positivity like MCgPC in \cite{carrillo2019particle}, and moreover preserves the mean velocity during evolution.

    The rest of the paper is organized as follows. In Section \ref{sec:prelimiaries}, we give a concise introduction to the Cucker-Smale model including notations, assumptions and prerequisites. Section \ref{sec:main} presents our main results on the analysis of limits (b) and (c) in \eqref{graph} with helpful discussions on the limits from (a) to (d). Section \ref{sec: mainpf1} and Section \ref{sec: mainpf2} provide the proof details. In Section \ref{sec:the RBMgPC}, we show some numerical experiments with the RBM-gPC. Finally, Section \ref{sec:conclusion} is devoted to a summary of our main results and some remaining issues to be explored in the future.
    
    

\section{Preliminaries}\label{sec:prelimiaries}
In this section, we introduce the main models in \eqref{graph} with assumptions and notations. Also, some basic properties of the models are given.

\subsection{The Cucker-Smale model}\label{subsec: CSmodel}
Let $X_i$ and $V_i$ be the position and velocity of the $i$-th particle with unit mass, and $\psi(| X_j -X_i|)$ be the communication weight between the $j$-th and $i$-th particles. The Cucker-Smale model reads as the following
\begin{equation} \label{eq: cs}
    \left\{\begin{aligned}
        \frac{d}{dt} X_i(t) =& V_i(t),\quad i=1,\cdots, N,\\
        \frac{d}{dt} V_i(t) =& \frac{\kappa}{N-1}\sum\limits_{j=1}^N 
        \psi(|X_j(t)-X_i(t)|)(V_j(t)-V_i(t)),
    \end{aligned}\right.
\end{equation}
where $\kappa$ is the nonnegative coupling strength and $\psi$ satisfies  positivity, boundedness, Lipschitz continuity and mononticity conditions, i.e., there exist positive constants $\psi_0, \psi_M  > 0 $  such that 
\begin{equation}\label{ass:psi}
    0< \psi_0 \le \psi(r) \le \psi_M,\, \forall r\ge0; \quad \|\psi\|_\text{{Lip}}<\infty;\quad (\psi(r_1)-\psi(r_2))(r_1-r_2)\le 0,\, r_1,r_2 \in \mathbb{R}_+.
\end{equation}
Without loss of generality, we set $\psi_M = 1$ in this paper for convenience. 
Under the condition \eqref{ass:psi} of $\psi,$ it is observed in \cite{cucker2007emergent,ha2009simple} that the total momentum is conserved as a constant and the total energy is nonincreasing along the Cucker-Smale flow. Actually,
    supposing that $\{(X_i,V_i)\}$ is the solution of  system \eqref{eq: cs}, then for any $t>0,$ one has
    \begin{equation}\label{eq:CSproperty}
        \frac{d}{dt}\sum\limits_{i=1}^N V_i (t) = 0, \quad \frac{d}{dt}\sum\limits_{i=1}^N |V_i(t)|^2 = -\frac{\kappa}{N-1}\sum\limits_{i,j}\psi(|X_j(t) - X_i(t)|) |V_j (t) - V_i (t) |^2.
    \end{equation}

A key observation of the Cucker-Smale model, given by Ha et al. in \cite{ha2009simple}, is that the standard deviations of particle phase-space positions are dominated by SDDI (the system of dissipative differential inequalities):
\begin{equation}\label{origin SDDI}
    \left| \frac{d\mathcal{X}}{dt} \right| \le \mathcal{V}, \quad \frac{d\mathcal{V}}{dt} \le -\phi (\mathcal{X}) \mathcal{V},
\end{equation}
where $(\mathcal{X},\mathcal{V})$ are nonnegative functions and $\phi$ is a nonnegtive measurable function. This is a useful tool to analyze the Cucker-Smale model. In this paper, we use a variant  of \eqref{origin SDDI} (Lemma \ref{lem: SDDI}) in the proof of our main result. One can find its proof in Lemma 3.1 in \cite{ha2018uniform}.
\begin{lemma}\label{lem: SDDI}
    Suppose that two nonnegative Lipschitz functions $\mathcal{X}$ and $\mathcal{V}$ satisfy the coupled differential inequalities:
    \begin{equation}\label{eq:sddi}
        \left| \frac{d\mathcal{X}}{dt}\right| \le \mathcal{V}, \quad\frac{d\mathcal{V}}{dt} \le -\alpha\mathcal{V} + \gamma e^{-\alpha t}\mathcal{X}, \quad a.e. \,t>0,
    \end{equation}
    where $\alpha$ and $\gamma$ are positive constants. Then, $\mathcal{X}$ and $\mathcal{V}$ satisfy the uniform bound and decay estimates:
    \begin{equation*}
        \mathcal{X}(t) \le \frac{2M}{\alpha} (\mathcal{X}(0) + \mathcal{V}(0)), \quad \mathcal{V}(t) \le M (\mathcal{X}(0) + \mathcal{V}(0)) e^{-\frac{\alpha t}{2}}, \quad t\ge 0,
    \end{equation*}
    where $M$ is given by
    \begin{equation*}
        M:= \max \left\{1,\frac{2\gamma}{\alpha e}\right\} + \frac{8\gamma}{\alpha^3 e^3}.
    \end{equation*}
\end{lemma}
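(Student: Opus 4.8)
The plan is to treat this as a standard estimate for a system of dissipative differential inequalities (the argument is essentially that of \cite{ha2018uniform}): first decouple the two inequalities with an integrating factor, then propagate an exponentially decaying bound on $\mathcal{V}$ by a continuity (bootstrap) argument.

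Since $\mathcal{X}$ and $\mathcal{V}$ are Lipschitz, they are absolutely continuous and the a.e.\ inequalities may be integrated. First I would multiply the second inequality by $e^{\alpha t}$, turning it into $\frac{d}{dt}\big(e^{\alpha t}\mathcal{V}(t)\big)\le \gamma\,\mathcal{X}(t)$ a.e.\ and hence, by variation of parameters,
\begin{equation*}
  \mathcal{V}(t)\le \mathcal{V}(0)e^{-\alpha t}+\gamma\int_0^t e^{-\alpha(t-s)}\mathcal{X}(s)\,ds,
  \qquad
  \mathcal{X}(t)\le \mathcal{X}(0)+\int_0^t\mathcal{V}(s)\,ds ,
\end{equation*}
the second bound being the integrated form of $|\mathcal{X}'|\le\mathcal{V}$. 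Writing $N_0:=\mathcal{X}(0)+\mathcal{V}(0)$, these two bounds form a closed system; the subtlety is that the source term $\gamma e^{-\alpha(t-s)}\mathcal{X}(s)$ still couples $\mathcal{V}$ back to $\mathcal{X}$, so one cannot simply apply Gronwall and must instead extract the decay of $\mathcal{V}$ self-consistently.

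For the bootstrap, with $M$ as in the statement I would set
\begin{equation*}
  T^{\ast}:=\sup\Big\{\,t\ge 0 : \ \mathcal{V}(s)\le M N_0\, e^{-\alpha s/2}\ \text{ for all }s\in[0,t]\,\Big\},
\end{equation*}
observe that $M\ge 1$ makes the bound hold at $s=0$ so that $T^{\ast}>0$, and argue by contradiction that $T^{\ast}=\infty$. On $[0,T^{\ast}]$ the integrated first inequality gives $\mathcal{X}(s)\le \mathcal{X}(0)+\tfrac{2M}{\alpha}N_0\le\big(1+\tfrac{2M}{\alpha}\big)N_0$, i.e.\ the asserted bound on $\mathcal{X}$ up to renaming the constant. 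Feeding this into the variation-of-parameters formula and using $\int_0^t e^{-\alpha(t-s)}e^{-\alpha s}\,ds=t\,e^{-\alpha t}$ gives $\mathcal{V}(t)\le \mathcal{V}(0)e^{-\alpha t}+\tfrac{2\gamma M}{\alpha}N_0\,t\,e^{-\alpha t}$ on $[0,T^{\ast}]$. This is exactly where the decay rate has to drop from $\alpha$ to $\alpha/2$: the inhomogeneous term is only $O(t e^{-\alpha t})=O(e^{-\alpha t/2})$. I would then bound the right-hand side by $M N_0 e^{-\alpha t/2}$ using $\mathcal{V}(0)\le N_0$ and the elementary extremum $\sup_{t\ge 0}t e^{-\alpha t/2}=\tfrac{2}{\alpha e}$ (and, on a refined pass, $\sup_{t\ge0}t^{3}e^{-3\alpha t/2}=\tfrac{8}{\alpha^{3}e^{3}}$, which accounts for the second term of $M$); the explicit value $M=\max\{1,\tfrac{2\gamma}{\alpha e}\}+\tfrac{8\gamma}{\alpha^{3}e^{3}}$, built precisely from $\gamma$ times these two extrema together with the floor $1$, is what makes the resulting inequality \emph{strict}. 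The strict improvement contradicts maximality of $T^{\ast}$, so $T^{\ast}=\infty$; substituting the now global velocity bound back into $\mathcal{X}(t)\le\mathcal{X}(0)+\int_0^t\mathcal{V}(s)\,ds$ then yields the stated bound on $\mathcal{X}$.

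The step I expect to be the main obstacle is the quantitative bookkeeping in this closing argument: one must pick the decay exponent and the constant $M$ so that the loop ``assumed decay of $\mathcal{V}$'' $\to$ ``bound on $\mathcal{X}$'' $\to$ ``improved decay of $\mathcal{V}$'' closes \emph{strictly}, and then check that the displayed $M$ actually achieves this — recovering a strictly better constant rather than merely the same one is what the precise shape of $M$ is for. The measure-theoretic points (integrating the a.e.\ inequalities for Lipschitz functions; continuity of $t\mapsto$ the bootstrap quantity, so that $T^{\ast}$ is attained and the strict improvement makes sense) and the reabsorption of the residual $\mathcal{X}(0)$ term into the final constant are routine.
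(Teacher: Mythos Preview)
The paper does not give its own proof of this lemma: it simply states the result and refers the reader to \cite{ha2018uniform}, Lemma~3.1. Your bootstrap/continuity argument is exactly the standard proof one finds there, so in substance you are reproducing the cited argument rather than offering an alternative.

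One small slip worth fixing: after you (correctly) obtain $\frac{d}{dt}\big(e^{\alpha t}\mathcal{V}\big)\le \gamma\,\mathcal{X}$, the integrated form is $\mathcal{V}(t)\le \mathcal{V}(0)e^{-\alpha t}+\gamma\,e^{-\alpha t}\!\int_0^t \mathcal{X}(s)\,ds$, not $\gamma\int_0^t e^{-\alpha(t-s)}\mathcal{X}(s)\,ds$ as you wrote (the latter would correspond to a source $\gamma\mathcal{X}$ without the $e^{-\alpha t}$ factor). Your subsequent computation with $\int_0^t e^{-\alpha(t-s)}e^{-\alpha s}\,ds=t e^{-\alpha t}$ is in fact consistent with the correct formula, so the error is only in the displayed line, but it should be cleaned up. The rest of the sketch --- feeding the uniform bound on $\mathcal{X}$ back in, trading $t e^{-\alpha t}$ for $e^{-\alpha t/2}$ via $\sup_{t\ge 0} t e^{-\alpha t/2}=2/(\alpha e)$, and closing the loop strictly with the stated $M$ --- is the right outline; just be aware that recovering the exact constant $M=\max\{1,2\gamma/(\alpha e)\}+8\gamma/(\alpha^3 e^3)$ requires carrying the bookkeeping through one more iteration rather than stopping at the first pass.
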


\subsection{The mean-field limit of the Cucker-Smale model}

The corresponding evolution for the distribution function $\Tilde{f}(x, v, t)$ was first derived by Tadmor, etc. in \cite{Ha2008particle} by the BBGKY hierarchy (e.g. \cite{benedetto1997kinetic,degond2008continuum}), which follows
\begin{equation}\label{eq: mfl for cs}
    \partial_t \Tilde{f} + v\cdot \nabla_x \Tilde{f}+ \nabla_v \cdot [\xi[\Tilde{f}]\Tilde{f}]=0,
\end{equation}
where the operator $\xi$ is defined by
\begin{equation*}
    \xi[f](x,v,t) = \kappa \int\int \psi(|x-y|)(w-v)f(y,w,t)\, dwdy.
\end{equation*}
It is clear that 
\begin{equation}\label{eq:tildevf}
    \frac{\partial}{\partial t}\int \int v\Tilde{f}dxdv =0.
\end{equation}
Similar with energy-decreasing in \eqref{eq:CSproperty} in the microscopic scale, the mean-field limit has the following property: 
\begin{lemma}[\cite{piccoli2015control}, Theorem 3.1]\label{lem: supp} 
Let $f_0$ be compactly supported in $\mathbb{R}^{2d}$ with $\Bar{x} = \int x f_0 (x,v) dxdv$ and $\Bar{v} = \int v f_0 (x,v) dxdv.$ Define $C_{x0}:=\inf \{X\ge0\, | \,\text{supp}(f_0)\subset B(\Bar{x},X) \times \mathbb{R}^d \}$ and $C_v:=\inf \{V\ge0\, | \,  \text{supp}(f_0)\subset \mathbb{R}^d \times B(\Bar{v},V)\},$ where $B(\Bar{x},X)$ designates the ball of center $\Bar{x}$ and radius $X$ in $\mathbb{R}^d.$ Let $\Tilde{f}_t$ be the unique solution of  \eqref{eq: mfl for cs} with $\Tilde{f}(0) = f_0.$ If the initial data satisfies
\begin{equation}\label{eq: C_v}
    C_v < \kappa\int_{C_{x0}}^\infty \psi (2s) ds,
\end{equation}
then there exists $C_x>0$ such that 
    \begin{equation}\label{eq:2.11}
        \text{supp}[\Tilde{f}_t]\subset B(\Bar{x} + t\Bar{v}, C_x) \times B(\Bar{v}, C_v e^{-\kappa\psi(2C_x) t}).
    \end{equation}
\end{lemma}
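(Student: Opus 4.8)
The plan is to represent $\tilde{f}_t$ as the push-forward of $f_0$ under the characteristic flow of \eqref{eq: mfl for cs},
\[
\dot{x}=v,\qquad \dot{v}=\xi[\tilde{f}_t](x,v,t)=\kappa\iint\psi(|x-y|)(w-v)\,\tilde{f}_t(y,w)\,dwdy,
\]
and then to control the spatial and velocity radii of $\mathrm{supp}(\tilde{f}_t)$ by dissipative differential inequalities in the spirit of Lemma \ref{lem: SDDI}. Since $\tilde{f}_t$ is a probability measure and momentum is conserved by \eqref{eq:tildevf}, the barycenter travels as $\bar{x}+t\bar{v}$ and the mean velocity stays $\bar{v}$; passing to the relative variables $(x-\bar{x}-t\bar{v},\,v-\bar{v})$ leaves the interaction term unchanged and reduces matters to $\bar{x}=\bar{v}=0$. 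On the maximal interval $[0,T^\ast)$ on which $\tilde{f}_t$ is compactly supported, put $\mathcal{X}(t):=\max\{|x|:(x,v)\in\mathrm{supp}(\tilde{f}_t)\}$ and $\mathcal{V}(t):=\max\{|v|:(x,v)\in\mathrm{supp}(\tilde{f}_t)\}$. Because $|w-v|\le 2\mathcal{V}(t)$ and $\psi\le\psi_M=1$ on the support, $|\xi[\tilde{f}_t]|\le 2\kappa\mathcal{V}(t)$, so the characteristic system is well posed and $\mathcal{X},\mathcal{V}$ are locally Lipschitz, hence differentiable a.e.

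Next I would establish the SDDI. From $\dot{x}=v$ and an envelope (Danskin) argument one gets $|\dot{\mathcal{X}}|\le\mathcal{V}$ a.e. For the velocity radius, at a.e.\ $t$ choose a characteristic $(x_\ast,v_\ast)$ with $|v_\ast|=\mathcal{V}(t)$; then $|x_\ast|\le\mathcal{X}(t)$, so $|x_\ast-y|\le 2\mathcal{X}(t)$ for every $(y,w)\in\mathrm{supp}(\tilde{f}_t)$ and monotonicity of $\psi$ gives $\psi(|x_\ast-y|)\ge\psi(2\mathcal{X}(t))$. Since $v_\ast\cdot(w-v_\ast)=v_\ast\cdot w-|v_\ast|^2\le|v_\ast|\,|w|-|v_\ast|^2\le 0$ (using $|w|\le\mathcal{V}=|v_\ast|$), multiplying this nonpositive quantity by the larger positive factor $\psi(|x_\ast-y|)$ only decreases it, so
\begin{align*}
\frac{1}{2}\frac{d}{dt}|v_\ast|^2
&=\kappa\iint\psi(|x_\ast-y|)\,v_\ast\cdot(w-v_\ast)\,\tilde{f}_t\,dwdy\\
&\le\kappa\,\psi(2\mathcal{X})\iint v_\ast\cdot(w-v_\ast)\,\tilde{f}_t\,dwdy=-\kappa\,\psi(2\mathcal{X})\,\mathcal{V}^2,
\end{align*}
where the last step uses $\iint w\,\tilde{f}_t=0$ and $\iint\tilde{f}_t=1$. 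A second Danskin argument (noting $\mathcal{V}$ is nonincreasing) yields $\dot{\mathcal{V}}\le-\kappa\,\psi(2\mathcal{X})\,\mathcal{V}$ a.e., which is of the type covered by Lemma \ref{lem: SDDI} once $\psi(2\mathcal{X})$ is bounded below.

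To close the argument I would use the Lyapunov functional $E(t):=\mathcal{V}(t)+\kappa\int_0^{\mathcal{X}(t)}\psi(2s)\,ds$. By the two inequalities just derived, $\dot{E}=\dot{\mathcal{V}}+\kappa\,\psi(2\mathcal{X})\dot{\mathcal{X}}\le-\kappa\,\psi(2\mathcal{X})\mathcal{V}+\kappa\,\psi(2\mathcal{X})\mathcal{V}=0$, so $E$ is nonincreasing and $\int_0^{\mathcal{X}(t)}\psi(2s)\,ds\le\int_0^{C_{x0}}\psi(2s)\,ds+C_v/\kappa$ for all $t\in[0,T^\ast)$. Assumption \eqref{eq: C_v} says exactly that the right-hand side is strictly below $\int_0^\infty\psi(2s)\,ds$; since $r\mapsto\Psi(r):=\int_0^r\psi(2s)\,ds$ is continuous and strictly increasing, there is a finite $C_x:=\Psi^{-1}\!\big(\Psi(C_{x0})+C_v/\kappa\big)$ with $\mathcal{X}(t)\le C_x$ on $[0,T^\ast)$. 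This uniform spatial bound keeps the support from escaping, so the standard continuation argument gives $T^\ast=\infty$; moreover $\psi(2\mathcal{X}(t))\ge\psi(2C_x)>0$, hence $\dot{\mathcal{V}}\le-\kappa\psi(2C_x)\mathcal{V}$ and $\mathcal{V}(t)\le C_v e^{-\kappa\psi(2C_x)t}$. Reverting to the original frame gives \eqref{eq:2.11}. The main obstacle I anticipate is the bookkeeping forced by the low regularity of $\mathcal{X}$ and $\mathcal{V}$ (they are merely Lipschitz, so every differential inequality above is meant a.e.\ and must be justified through an envelope theorem), together with the bootstrap ensuring that $\mathrm{supp}(\tilde{f}_t)$ stays compact so that the characteristic representation is legitimate in the first place.
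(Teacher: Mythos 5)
Your sketch is correct, but note that the paper does not prove this lemma at all: it is quoted verbatim from the cited reference (Theorem 3.1 of \cite{piccoli2015control}), and your argument --- characteristics, Danskin-type envelope estimates for the support radii $\mathcal{X},\mathcal{V}$, the dissipative inequalities $|\dot{\mathcal{X}}|\le\mathcal{V}$, $\dot{\mathcal{V}}\le-\kappa\psi(2\mathcal{X})\mathcal{V}$, and the Lyapunov functional $\mathcal{V}+\kappa\int_0^{\mathcal{X}}\psi(2s)\,ds$ --- is essentially the standard proof given there, the same machinery the paper itself deploys for the analogous Lemma \ref{lem: suppS}. No genuine gap; only the routine a.e.\ differentiability and continuation bookkeeping you already flagged remains to be written out.
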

\begin{remark}
    Under  assumption \eqref{ass:psi} of $\psi,$  one has $\psi(2C_x)\ge \psi_0.$ Actually, the assumption of the lower bound $\psi_0$ is for the convenience of the proof and can be relaxed if $\eqref{eq: C_v}$ is satisfied by both discrete and continuous systems, guaranteeing the flocking in the Cucker-Smale model.
\end{remark}

\subsection{The Wasserstein distance}

To introduce the topology of the space of probability measures, here we recall the definitions of the Wasserstein space \cite{villani2009optimal}. For a metric space 
 $(\mathbb{R}^d,\rho)$ and $q\ge1,$ define the Wasserstein space of order $q$ by
\begin{equation*}
    \mathcal{P}_{q,\rho} (\mathbb{R}^d):= \left\{ \mu \in \mathcal{P}(\mathbb{R^d}); \int_{\mathbb{R}^d} \rho(x,0)^q \nu (dx)< \infty\right\},
\end{equation*}
endowed with the Wasserstein metric 
\begin{equation*}
    W_{q,\rho}(\mu,\nu) := \left( \inf\limits_{\pi \in \Pi (\mu,\nu)} \left\{ \int_{\mathbb{R}^d \times \mathbb{R}^d} \rho(x,y)^q \pi (dx,dy)\right\}\right)^{\frac{1}{q}},
\end{equation*}
where $\mu,\nu \in \mathcal{P}_q (\mathbb{R}^d)$ and $\Pi (\mu,\nu)$ is the set of probability measures on $\mathbb{R}^d \times \mathbb{R}^d$ with marginals $\mu$ and $\nu$ respectively. The $W_q$ metric, or distance, induces a kind of weak topology and measures the closeness between distributions. We use the notation $W_{q,p}$ when $\rho(x,y):= \|x-y \|_p$ and typically abbreviate $W_q = W_{q,2}.$ 
For convenience in notation, we simplify $\|\cdot\|$ as $|\cdot|$ without ambiguity.

Recall the Jordan decomposition for a signed measure $|\mu| := \mu^+ + \mu^-$. One can control the $W_q$ distance by the weighted total variation by the following lemma (see e.g. Theorem 6.15 in \cite{villani2009optimal}), which helps the proof of our main theorems, i.e., Theorem \ref{thm: 1} and Theorem \ref{thm: 2}, in Section \ref{sec: mainthm}.
\begin{lemma}[The Wasserstein distance is controlled by the weighted total variation]\label{lem: TV}
    Let $\mu$ and $\nu$ be two probability measures on a Polish space $(\mathfrak{X}, d).$ Let $q\in [1,\infty)$ and $x_0 \in \mathfrak{X}.$ Then 
    \begin{equation*}
        W_q (\mu,\nu) \le 2^{1-\frac{1}{q}}\left( \int d(x_0,x)^q d |\mu-\nu|(x) \right)^{\frac{1}{q}}.
    \end{equation*}
\end{lemma}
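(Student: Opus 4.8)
The plan is to prove the inequality by constructing an explicit transport plan between $\mu$ and $\nu$ out of the Jordan decomposition of the signed measure $\mu-\nu$, and then estimating the transport cost of that plan by routing every displacement through the base point $x_0$.

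First I would write $\mu-\nu = \sigma^+ - \sigma^-$, where $\sigma^\pm := (\mu-\nu)^\pm$ are the nonnegative, mutually singular parts of the Jordan decomposition, so that $|\mu-\nu| = \sigma^+ + \sigma^-$. Since $\mu$ and $\nu$ are probability measures, $\sigma^+(\mathfrak{X}) = \sigma^-(\mathfrak{X}) =: \alpha$; if $\alpha = 0$ then $\mu=\nu$ and there is nothing to prove, so assume $\alpha>0$. Setting $\eta := \mu - \sigma^+ = \nu - \sigma^- \ge 0$, I would then consider the coupling
\[
    \pi := (\mathrm{id},\mathrm{id})_{\#}\,\eta \;+\; \frac{1}{\alpha}\,\sigma^+ \otimes \sigma^-
\]
on $\mathfrak{X}\times\mathfrak{X}$, where the first term is the push-forward of $\eta$ onto the diagonal. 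A direct computation of marginals shows the first marginal of $\pi$ is $\eta + \sigma^+ = \mu$ and the second is $\eta + \sigma^- = \nu$, so $\pi \in \Pi(\mu,\nu)$.

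Then I would bound the cost of $\pi$. The diagonal part contributes nothing, so
\[
    W_q(\mu,\nu)^q \le \int_{\mathfrak{X}\times\mathfrak{X}} d(x,y)^q \, d\pi(x,y) = \frac{1}{\alpha}\int\!\!\int d(x,y)^q \, d\sigma^+(x)\, d\sigma^-(y).
\]
Using the triangle inequality together with convexity of $t\mapsto t^q$ for $q\ge1$, i.e. $d(x,y)^q \le \big(d(x,x_0)+d(x_0,y)\big)^q \le 2^{q-1}\big(d(x,x_0)^q + d(x_0,y)^q\big)$, and then integrating out the factor of total mass $\alpha$ in each of the two resulting terms ($\sigma^-$ in the first, $\sigma^+$ in the second), I would obtain
\[
    W_q(\mu,\nu)^q \le 2^{q-1}\Big(\int d(x_0,x)^q\, d\sigma^+(x) + \int d(x_0,y)^q\, d\sigma^-(y)\Big) = 2^{q-1}\int d(x_0,x)^q\, d|\mu-\nu|(x),
\]
and taking $q$-th roots finishes the proof. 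The argument is essentially routine (it is the proof of Theorem 6.15 in \cite{villani2009optimal}, reproduced for completeness); the only points needing a little care are verifying that $\pi$ has the correct marginals and handling the degenerate case $\mu=\nu$ separately, since otherwise the normalization $1/\alpha$ is undefined.
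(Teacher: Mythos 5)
Your argument is correct: the coupling $(\mathrm{id},\mathrm{id})_{\#}\eta + \frac{1}{\alpha}\sigma^+\otimes\sigma^-$ has the right marginals, and the triangle inequality through $x_0$ together with the convexity bound $d(x,y)^q\le 2^{q-1}(d(x,x_0)^q+d(x_0,y)^q)$ yields exactly the stated constant $2^{1-\frac{1}{q}}$ after taking $q$-th roots. The paper does not prove this lemma itself but simply cites Theorem 6.15 of \cite{villani2009optimal}, and your proof is precisely that standard argument, so it matches the intended source.
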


\subsection{The Random Batch method and its mean-field dynamics}

In each sub-time interval $[t_{k-1},t_k),$ we set $[i]_k$ as the batch containing the particle $i.$ The RBM-(approximated) system of \eqref{eq: cs} becomes:
\begin{equation} \label{eq: the RBMcs}
    \left\{\begin{aligned}
        \partial_t X_i^R(t) =& V_i^R(t),\quad i=1,\cdots, N,\\
        \partial_t V_i^R(t) =& \frac{\kappa}{p-1}\sum\limits_{j\in[i]_k} 
        \psi(|X_j^R(t)-X_i^R(t)|)(V_j^R(t)-V_i^R(t)),
    \end{aligned}\right.
\end{equation}
with initial data $(X_i^R(0),V_i^R(0)) = (X_i^{in},V_i^{in}),$ $ i = 1,\cdots,N.$ See Algorithm \ref{algo: the RBM}.

\begin{algorithm}\label{algo: the RBM}
\caption{ The RBM for \eqref{eq: cs}}
\For{$k=1$ to $T/\tau$}
    {Divide $\{1,\cdots, N\}$ into $n = N/p$ batches randomly\;
    
    \For{each batch $\mathcal{C}_q$}
        {
        Update $(X^R_i,V^R_i)$ in $\mathcal{C}_q$ by solving 
        \begin{equation*}
            \left\{\begin{aligned}
                \partial_t X_i^R(t) =& V_i^R(t),\, i=1,\cdots, N,\\
                \partial_t V_i^R(t) =& \frac{\kappa}{p-1}\sum\limits_{j\in\mathcal{C}_q}
                \psi(|X_j^R(t)-X_i^R(t)|)(V_j^R(t)-V_i^R(t)),
            \end{aligned}\right.
        \end{equation*}
        for $t\in [t_{k-1},t_k).$ 
        }
    }
\end{algorithm}


In the $N\to\infty$ limit, the $N$-particle system is then reduced to a $p$-particle
subsystem described by the following system for $t\in [t_{k},t_{k+1}):$
\begin{equation} \label{eq: the RBM limit discrete}
    \left\{\begin{aligned}
        \partial_t X_i(t) =& V_i(t),\quad i=1,\cdots, N,\\
        \partial_t V_i(t) =& \frac{\kappa}{p-1}\sum\limits_{j=1}^p 
        \psi(|X_j(t)-X_i(t)|)(V_j(t)-V_i(t)),
    \end{aligned}\right.
\end{equation}
with $Z_i:=(X_i,V_i)$ and $Z_i (t_k)$ being \textit{i.i.d} drawn from $f(\cdot; t_k).$ Here $f(\cdot; t_k)$ is the law of $Z_1(t_k^{-})$ for $k\ge 1$ and $f(\cdot;0)$ equals the initial distribution $f_0 .$ We impose $Z_1 (t_k^-) = Z_1 (t_k^+).$  Except the individual particle 1, the rest $p-1$ particles are sampled from an infinite pool of independent particles from particle $1$ at each time step $t_k$. This process becomes the mean-field limit model of the RBM system \eqref{eq: the RBMcs}, and one may write the following mean-field limit for the RBM in terms of the probability distribution as shown in Algorithm \ref{algo: the RBM mfl}, while \eqref{eq: the RBM limit discrete} becomes its microscopic description.

\begin{algorithm}\label{algo: the RBM mfl}
\caption{Mean-field Dynamics of the RBM \eqref{eq: the RBM limit discrete}}\label{algorithm}
From $t_k$ to $t_{k+1}$ the distribution function $f_{t_k}$ will be transformed into $f_{t_{k+1}} := \mathcal{Q}_{\infty} (f_{t_{k}})$ as follows\;
Let $f^{(p)}(\cdots;t_k):=f_{t_k}^{\otimes p}$ be a probability measure on $\mathbb{R}^{2dp}$\;
Evolve $f^{(p)}$ by time $\tau$ according to
\begin{equation}\label{the RBM Liouville}
     \partial_t f^{(p)} + \sum\limits_{i=1}^p \nabla_{x_i} \cdot (v_i f^{(p)}) +\sum\limits_{i=1}^p \nabla_{v_i} \cdot \left(\xi_i  f^{(p)} \right)=0,
\end{equation}
     where
 \begin{equation*}
     \xi_i  := \frac{\kappa}{p-1} \sum\limits_{j=1}^p\psi (|x_j-x_i|)(v_j -v_i).
 \end{equation*}
Set $\mathcal{Q}_{\infty}(f_{t_k}) : = \int _{\mathbb{R}^{2d(p-1)}} f^{(p)}(\cdots; t_{k+1}^-) dx_2 \cdots dx_p dv_2\cdots dv_p .$

\end{algorithm}

\section{The Main Result}\label{sec:main}

In this paper, we show the conservation law and the mean-field limit of the Random-Batch Cucker-Smale model. First, we show the conservation of momentum and dissipation of kinetic energy of the  mean-field limit system of RBM in Section \ref{sec:prop}. Then, we analyze the procedure (b) and (c) in Section \ref{sec: mainthm}. 

Assume that the initial data of Algorithm \ref{algo: the RBM} and Algorithm \ref{algo: the RBM mfl} obey the same law $f_0(x,v)dxdv.$ For notational convenience, recall $\Bar{x}:= \int x f_0(x,v)$ and $\Bar{v}:=\int vf_0 (x,v);$ and set $z_i:=(x_i,v_i)$ for each $i$ and 
\[
f(x,v;t):= \int_{\mathbb{R}^{2d(p-1)}} f^{(p)}(\cdots ; t^-)dx_2 \cdots dx_p dv_2\cdots dv_p = f_t(x,v).
\]
Note that $f$ is defined on $\mathbb{R}^{2d}$ in Algorithm \ref{algo: the RBM mfl} and $f^{(p)}$ is defined on $\mathbb{R}^{2dp}.$ Also, the definition of $f$ is consistent with that in Algorithm \ref{algo: the RBM mfl} at $t=t_k.$

In the following, we suppose that $f_0$ is compactly supported and then there exists a constant $R>0$ such that supp$[f_0]\subset B(0,R),$ where $B(0,R)$ is a ball with radius $R$ centered in the origin. Without loss of generality, we set $\Bar{v}=0$ as a standardization of the initial momentum.

\subsection{Basic characteristics of the mean-field limit system of RBM}\label{sec:prop}

Here, we show that the two basic characteristics: conservation of momentum and dissipation of kinetic energy, as observed in the Cucker-Smale model \eqref{eq: cs} and the RBM system \eqref{eq: the RBMcs}, also holds for the RBM mean-field system described in Algorithm \ref{algo: the RBM mfl}.

\begin{proposition}[Conservation of momentum] \label{prop: M1v}
For $t\in (t_{k},t_{k+1}),$ it holds
    \begin{equation*}
        \frac{\partial}{\partial t} \int_{\mathbb{R}^{2d}} v f(x,v;t) dx dv = 0.
    \end{equation*}
In particular, since $\Bar{v}=0,$ $\int vf(x,v;t) = 0$ for any $t\in [0,T].$
\end{proposition}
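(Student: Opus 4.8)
The plan is to carry out the computation on the $p$-particle density $f^{(p)}$ living on $\mathbb{R}^{2dp}$, which on the open interval $(t_k,t_{k+1})$ satisfies the Liouville equation \eqref{the RBM Liouville} with initial datum $f^{(p)}(\cdots;t_k)=f_{t_k}^{\otimes p}$. By the definition of the marginal, $\int_{\mathbb{R}^{2d}} v f(x,v;t)\,dx\,dv=\int_{\mathbb{R}^{2dp}} v_1 f^{(p)}(\cdots;t)\,dz_1\cdots dz_p$, so it suffices to prove this quantity is constant in $t$ on $(t_k,t_{k+1})$. Two structural facts are needed. First, $f^{(p)}(\cdots;t)$ stays compactly supported on the finite horizon: it is the pushforward of the compactly supported $f_{t_k}^{\otimes p}$ along the flow of \eqref{eq: the RBM limit discrete}, whose trajectories remain bounded over $[t_k,t_{k+1}]$ since $\psi\le 1$; this justifies the integrations by parts below and the finiteness of all moments involved. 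Second, $f^{(p)}(\cdots;t)$ remains exchangeable, i.e. invariant under relabelling of the particles, because the initial tensor product is exchangeable and the vector field in \eqref{the RBM Liouville} is equivariant under permutations (permuting the $z_i$ just permutes the $\xi_i$). In particular $\int v_1 f^{(p)}\,dz=\frac1p\int_{\mathbb{R}^{2dp}}\big(\sum_{i=1}^p v_i\big) f^{(p)}\,dz$.

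Next I would differentiate under the integral sign and substitute \eqref{the RBM Liouville}:
\begin{equation*}
\frac{d}{dt}\int\Big(\sum_{a=1}^p v_a\Big) f^{(p)}\,dz
= -\sum_{a=1}^p\sum_{i=1}^p\int v_a\,\nabla_{x_i}\!\cdot\!(v_i f^{(p)})\,dz
-\sum_{a=1}^p\sum_{i=1}^p\int v_a\,\nabla_{v_i}\!\cdot\!(\xi_i f^{(p)})\,dz .
\end{equation*}
Every term of the first double sum vanishes by integration by parts in $x_i$, since the weight $v_a$ does not depend on $x_i$ and $f^{(p)}$ has compact support. In the second double sum, integrating by parts in $v_i$ and using $\nabla_{v_i} v_a=\delta_{ai}\,\mathrm{Id}$ leaves only the terms with $a=i$, so the whole expression reduces to $\sum_{i=1}^p\int \xi_i f^{(p)}\,dz=\int\big(\sum_{i=1}^p \xi_i\big) f^{(p)}\,dz$. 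It would then remain to observe that $\sum_{i=1}^p\xi_i$ vanishes pointwise,
\begin{equation*}
\sum_{i=1}^p \xi_i=\frac{\kappa}{p-1}\sum_{i=1}^p\sum_{j=1}^p \psi(|x_j-x_i|)(v_j-v_i)=0,
\end{equation*}
because the summand is antisymmetric under the exchange $i\leftrightarrow j$ while the factor $\psi(|x_j-x_i|)$ is symmetric. Hence $\frac{d}{dt}\int v f(x,v;t)\,dx\,dv=0$ on each $(t_k,t_{k+1})$.

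The final assertion then follows at once: the first moment equals $\bar v=0$ at $t=0$, is constant on every sub-interval, and is not changed across a resampling time $t_k$, because each particle is redrawn independently from the law $f(\cdot;t_k)$, whose mean is the common value $\int v f(x,v;t_k)\,dx\,dv$; therefore $\int v f(x,v;t)\,dx\,dv=0$ for all $t\in[0,T]$. The only genuinely delicate point will be the justification of the integrations by parts — that is, controlling the decay of $f^{(p)}$ and of the fluxes $v_i f^{(p)}$, $\xi_i f^{(p)}$ at infinity — which is why I isolate the preservation of compact support (ultimately a consequence of $\psi\le1$) as a preliminary step; once that is in hand the identity is a short computation whose heart is the pairwise antisymmetry of the interaction force.
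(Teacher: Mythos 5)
Your proposal is correct and follows essentially the same route as the paper: express the first moment through the marginal of $f^{(p)}$, differentiate using the Liouville equation \eqref{the RBM Liouville}, integrate by parts, and cancel the interaction contribution by a symmetry argument, then propagate the constant value across the resampling times $t_k$. The only (harmless) difference is bookkeeping — the paper keeps the weight $v_1$ and kills the remaining term because $\psi(|x_j-x_1|)(v_j-v_1)f^{(p)}_t$ is odd under the swap $z_1\leftrightarrow z_j$ by exchangeability, whereas you pass to the total momentum $\sum_{i}v_i$ (using exchangeability there instead) and cancel pointwise via $\sum_{i}\xi_i=0$; your explicit justification of the integrations by parts via preservation of compact support is a sound addition that the paper leaves implicit.
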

\begin{proof}
By the definition of $f,$ one has
\begin{equation*}
    \begin{aligned}
        \frac{\partial}{\partial t}\int_{\mathbb{R}^{2d}} vf(x,v;t)dxdv = &
        \int_{\mathbb{R}^{2d}} \int_{\mathbb{R}^{2d(p-1)}} v \frac{\partial}{\partial t} f^{(p)} (x,\cdot, v, \cdot;t^-)dxdx_2\cdots dx_p dvdv_2\cdots dv_p\\
        = & \int_{\mathbb{R}^{2dp}} v_1 \left[ -\sum\limits_{i=1}^p \nabla_{x_i}\cdot v_i f^{(p)}(\cdots ; t) - \sum\limits_{i=1}^p \nabla_{v_i} \cdot \xi_i f^{(p)}(\cdots ;t)\right]dz_1\cdots dz_p\\
        = & \frac{\kappa}{p-1}\sum\limits_{j=2}^p\int_{\mathbb{R}^{2dp}} \psi (|x_j - x_1|)(v_j-v_1) f^{(p)}_t(z_1,\cdots ,z_p)dz_1\cdots dz_p\\
        = & 0.
    \end{aligned}
\end{equation*}
The last equality is derived from the fact that $\psi (|x_j - x_1|)(v_j-v_1)f^{(p)}_t$ is an odd function on $(z_j,z_1).$
Note that $f(x,v;t_k^-) = f(x,v;t_k) = f(x,v;t_k^+).$ Then by induction one has $\int vf(x,v;t) = \Bar{v}.$
\end{proof}

\begin{proposition}[Dissipation of kinetic energy] \label{prop: M2v}
For any $t\in (t_{k},t_{k+1}),$
    \begin{equation*}
        \frac{\partial}{\partial t} \int_{\mathbb{R}^{2d}} |v|^2 f(x,v;t)dxdv \le 0.
    \end{equation*}
In particular, $M_2^v f_t \le M_2^vf_0,$ where $M_2^v f_t:=\int_{\mathbb{R}^{2d}} |v|^2 f(x,v;t)dxdv.$ Moreover, since $0<\psi_0\le \psi,$ it holds
    \begin{equation*}
        M_2^v f_{k+1} \le M_2^v f_0\left(e^{-2\kappa \psi_0 \tau} + \frac{8\kappa^2 \psi_0}{p-1}\tau^2 \right)^{k} .
    \end{equation*}
\end{proposition}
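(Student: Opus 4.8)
I will establish the three assertions in turn, using throughout that $M_2^v f_t=\mathbb{E}\,|V_1(t)|^2=\frac1p\,\mathbb{E}\sum_{i=1}^p|V_i(t)|^2$, where $(X_i,V_i)_{i=1}^p$ denotes the microscopic $p$-particle process of \eqref{eq: the RBM limit discrete}: on each interval $[t_k,t_{k+1})$ this is a Cucker--Smale system of $p$ particles started i.i.d.\ from $f_k$, so its law is exchangeable and \eqref{eq:CSproperty} applies with $N$ replaced by $p$. The first inequality is then immediate: \eqref{eq:CSproperty} gives, along every trajectory, $\frac{d}{dt}\sum_{i=1}^p|V_i(t)|^2=-\frac{\kappa}{p-1}\sum_{i,j}\psi(|X_j-X_i|)|V_j-V_i|^2\le 0$, and taking expectations and dividing by $p$ yields $\frac{\partial}{\partial t}M_2^v f_t\le 0$ on $(t_k,t_{k+1})$. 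The second assertion follows by induction on $k$: $M_2^v f_t$ is non-increasing on each open interval, and the resampling at $t_k$ leaves the marginal law of particle $1$ unchanged, so $M_2^v f(\cdot;t_k^-)=M_2^v f(\cdot;t_k^+)$, exactly as in the proof of Proposition~\ref{prop: M1v}.

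For the decay estimate the key object is the fluctuation energy in the center-of-mass frame. Let $\overline V_p(t):=\frac1p\sum_{i=1}^p V_i(t)$, which is conserved on $[t_k,t_{k+1})$ by \eqref{eq:CSproperty}, and set $E(t):=\sum_{i=1}^p|V_i(t)-\overline V_p(t)|^2=\sum_i|V_i|^2-p|\overline V_p|^2$. Since $\overline V_p$ is frozen, $\frac{d}{dt}E=\frac{d}{dt}\sum_i|V_i|^2$, and combining \eqref{eq:CSproperty} with the lower bound $\psi\ge\psi_0$ and the identity $\sum_{i,j}|V_j-V_i|^2=2pE$ gives the pathwise differential inequality $\frac{d}{dt}E\le-\frac{2\kappa\psi_0 p}{p-1}E$, hence $E(t_{k+1}^-)\le e^{-\frac{2\kappa\psi_0 p}{p-1}\tau}E(t_k^+)$; this survives taking expectations.

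It remains to evaluate the two endpoints. At $t_k^+$ the $p$ particles are i.i.d.\ with law $f_k$, mean $\bar v=0$ and second moment $M_2^v f_k$, so $\mathbb{E}|\overline V_p(t_k^+)|^2=M_2^v f_k/p$ and therefore $\mathbb{E}E(t_k^+)=(p-1)M_2^v f_k$; since $\overline V_p$ does not change on the interval, $\mathbb{E}|\overline V_p(t_{k+1}^-)|^2=M_2^v f_k/p$ and, by exchangeability, $\mathbb{E}E(t_{k+1}^-)=p\,M_2^v f_{k+1}-M_2^v f_k$. Feeding these into the decay of $\mathbb{E}E$ yields the one-step recursion $M_2^v f_{k+1}\le\frac1p\bigl(1+(p-1)e^{-\frac{2\kappa\psi_0 p}{p-1}\tau}\bigr)M_2^v f_k$. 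Finally, writing $\beta:=2\kappa\psi_0\tau$, factoring $e^{-\frac{p}{p-1}\beta}=e^{-\beta}e^{-\beta/(p-1)}$ and using the elementary bounds $e^{-\beta/(p-1)}\le 1-\frac{\beta}{p-1}+\frac{\beta^2}{2(p-1)^2}$ and $1-(1+\beta)e^{-\beta}=\int_0^\beta s\,e^{-s}\,ds\le\frac{\beta^2}{2}$, one reduces $\frac1p\bigl(1+(p-1)e^{-\frac{p}{p-1}\beta}\bigr)$ to at most $e^{-\beta}+\frac{\beta^2}{2(p-1)}$, which (using $\psi_0\le\psi_M=1$) is at most $e^{-2\kappa\psi_0\tau}+\frac{8\kappa^2\psi_0\tau^2}{p-1}$; iterating this bound over the steps $1,\dots,k$, together with $M_2^v f_1\le M_2^v f_0$ from the second assertion, gives the claim.

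The step I expect to be most delicate is the bookkeeping around the resampling times. One has to use carefully that immediately after $t_k$ the $p$ particles are i.i.d.\ with law $f_k$, so that the cross terms in $\mathbb{E}|\overline V_p(t_k^+)|^2$ vanish thanks to $\bar v=0$; that $\overline V_p$ is conserved only within an interval while exchangeability persists across it; and that the $t_k^\pm$ limits are matched consistently, as in Proposition~\ref{prop: M1v}. In fact the $1/p$ appearing in $\mathbb{E}|\overline V_p(t_k^+)|^2=M_2^v f_k/p$ is exactly what cancels the first-order-in-$\tau$ term in the recursion, leaving a correction of size $O(\tau^2/(p-1))$ rather than $O(\tau)$; the closing calculus inequality is then routine once one allows the generous factor $8$.
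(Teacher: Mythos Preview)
Your proof is correct, and it takes a genuinely different route from the paper. The paper works directly at the PDE level with the joint density $f^{(p)}$: it shows $\partial_t M_2^v f_t\le-2\kappa\psi_0 M_2^v f_t+2\kappa\psi_0\int v_1\cdot v_2\,f^{(p)}_t$, then controls the correlation $\int v_1\cdot v_2\,f^{(p)}_t$ by bounding its time derivative by $\frac{4\kappa}{p-1}M_2^v f_{t_{k-1}}$ and using that it vanishes at $t_{k-1}$ by independence; Gr\"onwall then gives the one-step factor $e^{-2\kappa\psi_0\tau}+\frac{8\kappa^2\psi_0}{p-1}\tau^2$ directly. You instead pass to the particle picture and exploit the pathwise exponential decay of the centered energy $E=\sum_i|V_i-\overline V_p|^2$, reading off the recursion from the exact values of $\mathbb{E}E$ at $t_k^+$ and $t_{k+1}^-$ via independence and exchangeability. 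Your calculus reduction yields the sharper constant $\frac{2\kappa^2\psi_0^2}{p-1}\tau^2$ before you relax it with $\psi_0\le1$, whereas the paper's Gr\"onwall step lands on $8$ immediately. The paper's correlation term $\int v_1\cdot v_2\,f^{(p)}_t$ and your $\mathbb{E}|\overline V_p|^2$ are essentially the same object (they differ by a diagonal piece), so the two arguments are tracking the same information; your organization via the center-of-mass frame makes the cancellation of the $O(\tau)$ term more transparent, while the paper's differential-inequality approach stays closer to the kinetic formulation used elsewhere in the article.
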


    \begin{proof}
    For any $t\in (t_{k},t_{k+1}),$
        \begin{equation*}
            \begin{aligned}
                \frac{\partial}{\partial t} M_2^v f_t = & \int_{\mathbb{R}^{2dp}} |v_1|^2 \left( -\sum\limits_{i=1}^p \nabla_{x_i}\cdot (v_i f^{(p)}_t) - \sum\limits_{i=1}^p \nabla_{v_i}\cdot (\xi_i f^{(p)}_t) \right)\\
                =& \int_{\mathbb{R}^{2dp}} \frac{\kappa}{p-1}\sum\limits_{j=1}^p\psi(|x_1-x_j|)(v_j-v_1)\cdot 2v_1 f^{(p)}_t\\
                =& -\int_{\mathbb{R}^{2dp}} \frac{\kappa}{p-1}\sum\limits_{j=1}^p\psi(|x_1-x_j|)|v_j-v_1|^2 f^{(p)}_t\\
                \le& 0.
            \end{aligned}
        \end{equation*}
        The last equality is derived by the symmetry of $\psi$ and $f_t^{(p)}.$ By the continuity of $f,$ one has $M_2^v f_t \le M_2^vf_0.$ Since $\psi$ has a positive lower bound $\psi_0,$ furthermore, it holds
        \begin{equation}\label{eq:3.09}
            \begin{aligned}
                \frac{\partial}{\partial t} M_2^v f_t \le & -\int_{\mathbb{R}^{2dp}} \frac{\kappa}{p-1}\sum\limits _{j=1}^p \psi_0 |v_j-v_1|^2 f^{(p)}_t\\
                =& -2\kappa \psi_0 \int_{\mathbb{R}^{2d}} |v|^2 f_t(x,v) dxdv + 2\kappa\psi_0 \int_{\mathbb{R}^{2dp}} v_1\cdot v_2 f^{(p)}_t,
            \end{aligned}
        \end{equation}
        by the symmetry of $f^{(p)}_t$. For the second term in \eqref{eq:3.09}, one has
        \begin{equation}\label{eq:3.10}
            \begin{aligned}
                \frac{\partial}{\partial t}& \int_{\mathbb{R}^{2dp}} v_1 \cdot v_2 f^{(p)}_t\\
                = & \kappa\int_{\mathbb{R}^{2dp}} \frac{v_2}{p-1}\cdot\sum\limits_{k\ne 1}^p \psi(|x_1-x_k|)(v_k-v_1)f^{(p)}_t + \frac{v_1}{p-1}\cdot\sum\limits_{k\ne 2}^p \psi(|x_2-x_k|)(v_k-v_2) f^{(p)}_t\\
                = &2\kappa\int_{\mathbb{R}^{2dp}} \frac{v_2}{p-1}\cdot\sum\limits_{k\ne 1}^p \psi(|x_1-x_k|)(v_k-v_1)f^{(p)}_t.
            \end{aligned}
        \end{equation}
        For the last integration in \eqref{eq:3.10}, by the indistinguishability of the particles, one has  
        \begin{equation*}
            \begin{aligned}
                &\int_{\mathbb{R}^{2dp}} v_2\cdot\sum\limits_{k\ne 1}^p \psi(|x_1-x_k|)(v_k-v_1)f^{(p)}_t\\
                &= \int_{\mathbb{R}^{2dp}} v_2\cdot\sum\limits_{k\ne 1,2}^p \psi(|x_1-x_k|)(v_1-v_k)f^{(p)}_t + \int_{\mathbb{R}^{2dp}} v_2\cdot \psi(|x_1-x_2|)(v_2-v_1)f^{(p)}_t\\
                &= \int_{\mathbb{R}^{2dp}} v_2\cdot\sum\limits_{k\ne 1}^p \psi(|x_1-x_k|)(v_1-v_k)f^{(p)}_t 
                +\int_{\mathbb{R}^{2dp}} v_2\cdot \psi(|x_1-x_2|)(v_2-v_1)f^{(p)}_t\\
                &+ \int_{\mathbb{R}^{2dp}} v_2\cdot \psi(|x_1-x_2|)(v_2-v_1)f^{(p)}_t\\
                &= -\int_{\mathbb{R}^{2dp}} v_2\cdot\sum\limits_{k\ne 1}^p \psi(|x_1-x_k|)(v_k-v_1)f^{(p)}_t +\int_{\mathbb{R}^{2dp}} |v_2-v_1|^2 \psi(|x_1-x_2|) f^{(p)}_t.
            \end{aligned}
        \end{equation*}
        Hence, recalling $\psi\le 1,$ it holds
        $$\int_{\mathbb{R}^{2dp}} v_2\cdot\sum\limits_{k\ne 1}^p \psi(|x_1-x_k|)(v_k-v_1)f^{(p)}_t
        = \frac{1}{2}\int_{\mathbb{R}^{2dp}} |v_2-v_1|^2 \psi(|x_1-x_2|)f^{(p)}_t\le 2 M_2^v f_t.$$
        Then it yields from \eqref{eq:3.10} that 
        \begin{equation}\label{eq:3.12}
            \frac{\partial}{\partial t} \int_{\mathbb{R}^{2dp}} v_1 \cdot v_2 f^{(p)}_t\le \frac{4\kappa}{p-1} M_2^vf_{t_{k-1}}.
        \end{equation}
        By combining \eqref{eq:3.10} and \eqref{eq:3.12}, it holds
        \begin{equation*}
        \begin{aligned}
            \frac{\partial}{\partial t}M_2^vf_t &\le -2\kappa\psi_0 M_2^vf_t + 2\kappa\psi_0\frac{4\kappa\tau}{p-1}M_2^vf_{t_{k-1}} +  2\kappa\psi_0\int v_1 \cdot v_2 f_{t_{k-1}}^{(p)} \\
            &=  -2\kappa\psi_0 M_2^vf_t + 2\kappa\psi_0\frac{4\kappa\tau}{p-1}M_2^vf_{t_{k-1}}  \\
            &\le  -2\kappa\psi_0M_2^v f_t + \frac{8\kappa^2\psi_0}{p-1}\tau M_2^vf_{t_{k-1}}.
        \end{aligned}
        \end{equation*}
        By Gr\"onwall's inequality, one has
        $$ M_2^v f_{k} \le M_2^v f_{t_{k-1}}\left(e^{-2\kappa \psi_0 \tau} + \frac{8\kappa^2\psi_0}{p-1}\tau^2 \right) \le M_2^v f_0\left(e^{-2\kappa \psi_0 \tau} + \frac{8\kappa^2 \psi_0}{p-1}\tau^2 \right)^{k} .$$
    \end{proof}


\subsection{The main theorems}\label{sec: mainthm}

First we discuss the Wasserstein distance between the distribution of the RBM of the $N$-particle system \eqref{eq: the RBMcs} and the distribution of the $N\to \infty$ system \eqref{eq: the RBM limit discrete}. 

Define the operator $ \mathcal{Q}_N^{(k)}$ on the probability measure space $\mathcal{P}(\mathbb{R}^{2d})$ as follows. Let $Z_i^R(0)$ be \textit{i.i.d} drawn from $f_0.$ Corresponding to Algorithm \ref{algo: the RBM}, we define 
$$ \mathcal{Q}_N^{(k)}(f_0):= \text{Law}(Z_1^R(t_k)), $$
where $\text{Law}(Z_1^R)$ means the law of $Z_1^R.$  Conditioning on a specific sequence of random batches, the particles are not exchangeable. However, when considering the mixture of all possible sequences of random batches, the laws of $Z_i^R(t_k)$ are identical.

In addition, we define $ \mathcal{Q}_\infty$ as one time step evolution in Algorithm \ref{algo: the RBM mfl}. Thus one arrives at 
$$\mathcal{Q}_\infty^k (f_0) = \mathcal{Q}_\infty \circ \cdots \circ \mathcal{Q}_\infty(f_0), \text{ ($k$ copies)}, $$
which is expected to be the mean field limit of the RBM after $k$ steps. Note that the $\mathcal{Q}_\infty$ dynamics can fully determine the probability transition, knowing the marginal distribution of $Z_1,$ while knowing only the marginal distribution is not enough for the dynamics of $\mathcal{Q}_N^{(k)}$. The joint distribution must be known in the latter case.

Theorem \ref{thm: 1} gives an estimate between $\mathcal{Q}_\infty^k (f_0)$ and $\mathcal{Q}_N^{(k)}(f_0)$ on the $W_q$ distance. In this theorem, we introduce a quantity $\epsilon_k\in \mathbb{R},$ which was first introduced in \cite{jin2022mean}. Plainly speaking, the $\epsilon_k$ means the probability that the particle $i$ is not ``clean", where a particle $i$ is clean at $t_k^-$ means its batchmates at $t\le t_k$ were mutually independent and independent to the particle $i$ when they interacted. For better organization, we give the definition of "clean"   and more details in Section \ref{subsec: def of ek}.

\begin{theorem}\label{thm: 1}
    For $f_0$ with the compact support $\textit{supp} [f_0],$ it holds that 
    \begin{equation}\label{eq: thm3.1}
        W_q(\mathcal{Q}_\infty^k (f_0),\mathcal{Q}_N^{(k)} (f_0))\le C (1+t_k) \epsilon_k^{1/q},
    \end{equation}
    with $q \in [1,\infty)$ and $C=C(\kappa,\psi_0,R).$ For fixed $k,$ it holds
    \begin{equation}\label{eq: thm3.2}
        \lim\limits_{N\to \infty} \epsilon_{k} =0,
    \end{equation}
    with $\epsilon_{k}\le\mathcal{O}(N^{-1}).$
\end{theorem}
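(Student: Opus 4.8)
The plan is to decouple the two assertions in Theorem~\ref{thm: 1} and handle them by separate techniques: a stability/coupling argument for the Wasserstein bound \eqref{eq: thm3.1}, and a combinatorial counting argument for the decay \eqref{eq: thm3.2}. For \eqref{eq: thm3.1}, I would construct a synchronous coupling: run Algorithm~\ref{algo: the RBM} and Algorithm~\ref{algo: the RBM mfl} on the same probability space by giving particle $1$ the same initial datum and, at each step, the same batch of interaction partners whenever those partners happen to be ``clean'' in the sense promised in Section~\ref{subsec: def of ek}. On the event that particle $1$ remains clean through step $k$, the RBM trajectory of $Z_1^R$ and the $\mathcal{Q}_\infty$ trajectory of $Z_1$ satisfy the \emph{same} closed ODE on each subinterval with i.i.d.\ partner distribution $f_{t_j}$, so they coincide; hence the two laws differ only on the ``dirty'' event, whose probability is exactly $\epsilon_k$. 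Then invoke Lemma~\ref{lem: TV}: since $W_q$ is controlled by the weighted total variation $\bigl(\int d(x_0,x)^q\,d|\mu-\nu|\bigr)^{1/q}$, and the two measures agree off an event of mass $\epsilon_k$, it remains to bound the $q$-th moments of $Z_1^R(t_k)$ and $Z_1(t_k)$ uniformly.

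The moment bound is where the Cucker--Smale structure enters, and I expect it to be the main obstacle. Here I would use the flocking/asymptotic flock estimates: the velocity second moment is controlled via Proposition~\ref{prop: M2v} (and its microscopic analogue for the RBM system, which follows from \eqref{eq:CSproperty}-type identities together with Lemma~\ref{lem: SDDI}), giving $\mathbb{E}|V_1(t_k)|^q$ bounded uniformly in $k$ and $N$ by a constant depending only on $\kappa,\psi_0,R$. The position moment grows at most linearly in time because $|\dot X_1|=|V_1|$ and the velocity is decaying; this is the source of the $(1+t_k)$ prefactor in \eqref{eq: thm3.1}. Combining: the weighted-TV integrand is $\lesssim (1+t_k)^q$ on the dirty event of mass $\epsilon_k$, so Lemma~\ref{lem: TV} yields $W_q \le C(1+t_k)\epsilon_k^{1/q}$ with $C=C(\kappa,\psi_0,R)$. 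The delicate point is making the coupling genuinely exact on the clean event — one has to check that conditioning on cleanliness does not distort the law of the surviving partners away from $f_{t_j}^{\otimes(p-1)}$, which is precisely the content of the ``clean'' bookkeeping and must be quoted carefully from Section~\ref{subsec: def of ek}.

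For \eqref{eq: thm3.2}, the argument is purely combinatorial and does not involve the dynamics. I would set up a recursion for $\epsilon_k$: a particle is dirty at $t_k^-$ if it was dirty at $t_{k-1}^-$, or if at step $k$ it is batched with some particle that is already dirty, or if two of its $p-1$ new batchmates coincide with particles that have a shared history (a ``collision'' of ancestral lines). Counting the batch assignments — at step $k$ the batch of particle $1$ is a uniformly random $(p-1)$-subset of the remaining $N-1$ particles — each such bad event has probability $O(1/N)$ (the chance that a prescribed particle falls in particle $1$'s batch is $(p-1)/(N-1)$, and the number of previously-touched particles grows only like $O(k\,p)$, fixed). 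This gives a bound of the form $\epsilon_k \le \epsilon_{k-1} + C_k/N$ with $\epsilon_0 = 0$, hence $\epsilon_k \le C(k,p)/N = \mathcal{O}(N^{-1})$ for fixed $k$, and in particular $\lim_{N\to\infty}\epsilon_k = 0$. The only care needed is to enumerate \emph{all} ways cleanliness can be lost (including higher-order coincidences among the genealogical tree of particle $1$) and to verify each contributes at most $O(1/N)$; since the tree has bounded size for fixed $k$, a crude union bound suffices and no sharp constant is attempted.
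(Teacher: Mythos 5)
Your proposal follows essentially the same route as the paper: the clean/dirty bookkeeping controls the total variation by $\epsilon_k$ (your synchronous coupling is a repackaging of the paper's mixture identity $\mathcal{Q}_N^{(k)}(f_0)=\mathbb{P}(1\in A_k)\,\mathcal{Q}_\infty^k(f_0)+\mathbb{P}(1\notin A_k)\,\nu_k$, both resting on the short inductive fact, proved in Lemma \ref{lem:app 3.2} rather than merely contained in the definitions of Section \ref{subsec: def of ek}, that a clean particle at $t_k^-$ has law $\mathcal{Q}_\infty^k(f_0)$), after which Lemma \ref{lem: TV} together with the flocking diameter estimates of Lemma \ref{ineq: coarseD} gives the factor $(1+t_k)$ --- noting that you need almost-sure support bounds $|z|\le C(1+t_k)$, not just uniform $q$-th moments, to obtain the power $\epsilon_k^{1/q}$, and compact support plus flocking indeed supplies them. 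For \eqref{eq: thm3.2} your union-bound recursion is a slightly more elementary variant of the paper's product recursion $1-\epsilon_{k+1}\ge \mathbb{P}(B_k)(1-\epsilon_k)^p$ and works for fixed $k$, provided you observe that the ancestral set has size up to $p^k$ (not $\mathcal{O}(kp)$) and that the ``batched with a dirty particle'' term contributes $(p-1)\epsilon_{k-1}$, which is $\mathcal{O}(N^{-1})$ only through the induction hypothesis.
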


    


    Then, we consider the limit dynamics given by the operator $\mathcal{Q}_\infty$ and how $\mathcal{Q}_\infty^n (f_0)$ approximates the dynamics of the Fokker-Planck equation \eqref{eq: mfl for cs}, i.e. the procedure (c) in \eqref{graph}. We prove that the $W_q$ distance can be controlled by $\mathcal{\tau}$ with a smooth initial distribution $f_0$ with a compact support uniformly in time. 
    
    \begin{theorem} \label{thm: 2}
        Let $\Tilde{f}$ be a solution to the Fokker-Planck  equation \eqref{eq: mfl for cs} with initial condition $\Tilde{f}(0) = f_0.$ 
        It holds
        the uniform-in-time estimate
        \begin{equation}\label{eq: thm2}
            \sup_{n} W_q (\mathcal{Q}_\infty^n (f_0), \Tilde{f}(n\tau))\le C\tau,
        \end{equation}
        with $q\in[1,\infty)$and $C=C(\kappa,\psi_0,R).$
    \end{theorem}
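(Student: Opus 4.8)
The plan is to prove Theorem~\ref{thm: 2} by a telescoping/discrete-Gronwall argument in the Wasserstein metric, using the uniform-in-time support control provided by Lemma~\ref{lem: supp} (together with its RBM analogue, e.g. Proposition~\ref{prop: M2v}) to keep all constants independent of $n$. Write $g_n := \mathcal{Q}_\infty^n(f_0)$ and $h_n := \tilde f(n\tau)$, and let $S_\tau$ denote the exact flow map of the Fokker--Planck equation \eqref{eq: mfl for cs} over one step of length $\tau$, so that $h_{n+1} = S_\tau h_n$. By the triangle inequality,
\begin{equation*}
    W_q(g_{n+1}, h_{n+1}) \le W_q(\mathcal{Q}_\infty(g_n), S_\tau g_n) + W_q(S_\tau g_n, S_\tau h_n).
\end{equation*}
The first term is the \emph{one-step consistency error}: the distance between one step of the RBM-mean-field dynamics and one step of the true mean-field dynamics, both started from the same law $g_n$. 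The second term is the \emph{stability term}, which I will control by a one-step Lipschitz estimate $W_q(S_\tau g_n, S_\tau h_n) \le e^{C_0\tau} W_q(g_n, h_n)$ for the Fokker--Planck flow; since the true flow flocks (velocity support contracts like $e^{-\kappa\psi_0 t}$ by Lemma~\ref{lem: supp}), and the RBM mean-field flow also has uniformly bounded, in fact contracting, velocity moments, the effective Lipschitz constant can be taken uniform in $n$, and crucially the contraction in velocity should beat the $O(\tau)$ growth so that the accumulated prefactor $\sum_m e^{C_0 m\tau}(\text{one-step error})$ stays bounded rather than growing like $t_n$.

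For the one-step consistency error I would couple the two evolutions by a synchronous (same Brownian-free) characteristic coupling: run $p$ i.i.d.\ copies $Z_1,\dots,Z_p$ from $g_n$ under the $p$-particle Liouville equation \eqref{the RBM Liouville} for time $\tau$, and simultaneously run the McKean--Vlasov characteristic $\bar Z_1$ driven by the true mean-field force $\xi[\tilde f_t]$ started from the same $Z_1(0)$. Then $W_q^q(\mathcal{Q}_\infty(g_n), S_\tau g_n) \le \mathbb{E}|Z_1(\tau) - \bar Z_1(\tau)|^q$, and the difference in the two velocity drifts is
\begin{equation*}
    \frac{\kappa}{p-1}\sum_{j=2}^p \psi(|X_j - X_1|)(V_j - V_1) - \kappa\!\int\!\!\int \psi(|X_1 - y|)(w - V_1)\,g_n(y,w,t)\,dy\,dw,
\end{equation*}
which splits into (i) a \emph{law-mismatch} part, because $g_n$ has drifted away from $\tilde f_t$ during $[0,\tau]$ — this is $O(\tau)$ times the current discrepancy, feeding back into the Gronwall loop — plus (ii) a \emph{fluctuation} part, the empirical-average-minus-expectation error of the i.i.d.\ batch, which has size $O(\tau)$ after time integration (the integrand over $[0,\tau]$ is $O(1)$ and mean-zero-ish, but even the crude bound $\le C\tau$ on $\int_0^\tau(\cdots)$ suffices since $\psi$ is bounded and the velocity moments are uniformly bounded by Proposition~\ref{prop: M2v}). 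Combining, $W_q(\mathcal{Q}_\infty(g_n), S_\tau g_n) \le C\tau^{?}$; I expect the honest bound to be $O(\tau^{3/2})$ or at least $O(\tau)$ per step after one observes that the martingale-like oscillation and the Lipschitz-in-time drift both vanish as $\tau\to 0$. Summing $n \sim T/\tau$ steps with the contractive stability factor then yields the uniform-in-time $O(\tau)$ bound \eqref{eq: thm2}, using Lemma~\ref{lem: TV} where it is convenient to pass between $W_q$ and weighted total variation.

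The main obstacle, I expect, is making the accumulation uniform in $n$ rather than growing like $t_n = n\tau$. A naive Gronwall gives $W_q(g_n,h_n) \le e^{Cn\tau}\cdot n\tau \cdot (\text{one-step error}/\tau)$, which blows up; to get \eqref{eq: thm2} one must exploit that both flows flock, so that the velocity marginals and hence the interaction forces decay exponentially in $t$, making the per-step consistency error itself decay like $e^{-c m\tau}$ for the $m$-th step and the stability "constant" strictly less than $1$ for large $m$. Carefully bookkeeping the position support, which grows but stays \emph{bounded} (Lemma~\ref{lem: supp} gives $\mathrm{supp}_x \subset B(\bar x + t\bar v, C_x)$ with $C_x$ finite and, since $\bar v = 0$, a fixed ball), is what keeps $\psi(|x - y|) \ge \psi_0 > 0$ and therefore keeps the dissipation rate bounded below for all time — this is precisely where the second-order structure of Cucker--Smale and the flocking estimate replace the simple contraction used for first-order systems in \cite{jin2022mean}. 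A secondary technical point is establishing the one-step Lipschitz estimate for the Fokker--Planck flow $S_\tau$ in $W_q$ for $q > 1$ with a constant controlled only through the (uniformly bounded) moments and the Lipschitz norm of $\psi$; this is a standard coupling-of-characteristics computation but must be done with the compact-support bounds in hand so that $\|\psi\|_{\mathrm{Lip}}$ enters multiplied by a finite quantity.
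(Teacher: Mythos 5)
Your overall skeleton (telescoping one step at a time, a stability estimate for the flow, and a one-step consistency error damped by flocking) is the same strategy as the paper, but two of your key quantitative claims are not strong enough to close the argument, and one of them is a genuine gap. The decisive issue is the order of the one-step consistency error $W_q(\mathcal{Q}_\infty(g_n),S_\tau g_n)$. You leave it at ``$O(\tau^{3/2})$ or at least $O(\tau)$'', justified by the crude bound $\int_0^\tau(\cdots)\le C\tau$ on the force mismatch. But after summation over $m\sim n$ steps, even with an exponentially decaying weight $e^{-c m\tau}$ coming from flocking, a per-step error of size $C\tau\,e^{-cm\tau}$ only gives $C\tau\cdot(1-e^{-c\tau})^{-1}=O(1)$, and $C\tau^{3/2}e^{-cm\tau}$ only gives $O(\sqrt{\tau})$; neither yields the uniform bound \eqref{eq: thm2}. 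The theorem requires a per-step error of order $\tau^2$ (with the exponential factor), and obtaining it is the technical heart of the paper: one introduces the frozen-coefficient auxiliary system \eqref{eq: tempsys} with law $\Tilde{S}(\Tilde{f}_{t_k})$ and proves $W_q(\Tilde{S}(\Tilde{f}_{t_k}),\Tilde{f}_{t_{k+1}})\le Ce^{-\kappa\psi_0 t_k}\tau^2$ (Lemma \ref{lem: w2 S f}) and $W_q(\mathcal{Q}_\infty(\Tilde{f}_{t_k}),\Tilde{S}(\Tilde{f}_{t_k}))\le Ce^{-\kappa\psi_0 t_k}\tau^2$ (Lemma \ref{lem: 10}); the latter uses a forward-Euler intermediary and the fact that, after averaging over the i.i.d.\ batch at the start of the step, the empirical force coincides with the mean-field force at $t_k$, so the mean-zero fluctuation you wave away does not enter at order $\tau$ in the marginal law. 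Your synchronous coupling alone does not see this cancellation, and without it the fluctuation of the $(p-1)$-sample empirical force produces an $O(\tau)$ pathwise displacement per step, which is exactly what you cannot afford. Then $\sum_m Ce^{-\kappa\psi_0(m-1)\tau}\tau^2\le C\tau$, as in \eqref{eq:3.81}.

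A secondary inaccuracy is your stability mechanism. You propose a per-step factor $e^{C_0\tau}$ for the exact flow and then argue the ``stability constant is strictly less than $1$ for large $m$'' because of flocking. That is not correct as stated: only the velocity variable contracts, while the position error integrates the velocity error and does not contract, so no single step of the flow is a $W_q$-contraction in $(x,v)$. What actually makes the accumulation uniform in $n$ is a uniform-in-time Lipschitz bound for the \emph{iterated} map, obtained from the SDDI Lemma \ref{lem: SDDI}: the coupled differential inequalities for $(\mathbb{E}|X_1-\bar X_1|^q)^{1/q}$ and $(\mathbb{E}|V_1-\bar V_1|^q)^{1/q}$ give $|\Delta Z_1(t)|\le C|\Delta Z_1(0)|$ with $C$ independent of $t$ (this is Lemma \ref{lem: lmmG} and Lemma \ref{lem:G stability}, applied in the paper to $\mathcal{Q}_\infty$; the same argument works for the continuous flow $S_\tau$ if you prefer your direction of telescoping, which is a legitimate variant). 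With that uniform constant in hand, the conclusion follows only because the one-step errors are $O(\tau^2)e^{-\kappa\psi_0 t_m}$; so the missing $O(\tau^2)$ consistency estimate is the gap you must fill.
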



\subsection{Some helpful discussions}\label{sec:discussion}

In addition, we give some  discussions on Lemma \ref{lem: 10} that will be used for the proof of Theorem \ref{thm: 2}, and on the commutation of \eqref{graph}.

\subsubsection{Approximation using PDE analysis}

Similar with [\cite{jin2022mean}, Lemma 4.4], one can obtain an $\mathcal{O}(\tau^{\frac{2}{q}})$ bound on the $W_q$ distance between $\mathcal{Q}_\infty (\Tilde{f}_{t_k})$ and $\Tilde{S}(\Tilde{f}_{t_k})$ in Proposition \ref{lem: 10old}. Here $\Tilde{S}(\Tilde{f}_{t_k})$ denotes the law of \eqref{eq: tempsys} in Section \ref{subsec: auxiliary sys}.

\begin{proposition}\label{lem: 10old}
     Let $\Tilde{f}$ be a solution to the Fokker-Planck  equation \eqref{eq: mfl for cs} with initial condition $\Tilde{f}(0) = f_0 \in (C^2\cap W^{2,\infty})(\mathbb{R}^{2d}).$ Suppose that the interaction kernel $\psi$ satisfies $$\sup\limits_{r\ge 0} (|\psi^\prime(r)| + |\psi^{\prime\prime}(r)|) \le \epsilon_\psi$$ with a constant $\epsilon_\psi.$ It holds
    \begin{equation}\label{eq:lmm10old}
        W_q (\mathcal{Q}_\infty (\Tilde{f}_{t_k}),\Tilde{S}(\Tilde{f}_{t_k})) \le C(\kappa,\epsilon_\psi , R, d) \tau^{\frac{2}{q}}, \quad q\in [1,\infty).
    \end{equation}
\end{proposition}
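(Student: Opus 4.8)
The plan is to compare two stochastic processes on a single sub-interval $[t_k, t_{k+1}]$ that start from the same initial law $\tilde f_{t_k}$. The first, whose law at $t_{k+1}^-$ is $\mathcal{Q}_\infty(\tilde f_{t_k})$, is the $p$-particle RBM limit dynamics \eqref{eq: the RBM limit discrete}: particles $Z_1,\dots,Z_p$ drawn i.i.d. from $\tilde f_{t_k}$ evolve by the coupled ODEs with the $\frac{1}{p-1}\sum_{j=1}^p$ interaction. The second, whose law is $\tilde S(\tilde f_{t_k})$, is the "frozen-field" auxiliary system \eqref{eq: tempsys}, in which particle $1$ feels the mean-field force $\xi[\tilde f_{t_k}]$ computed from the fixed distribution $\tilde f_{t_k}$ at the left endpoint (the linearization of \eqref{eq: mfl for cs} around $t_k$). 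Couple the two by using the \emph{same} initial sample $Z_1(t_k)$ and the same pool of independent particles. By Lemma \ref{lem: TV}, or more directly by the definition of $W_q$ via this explicit coupling, it suffices to bound $\bigl(\mathbb{E}\,|Z_1^{\mathrm{RBM}}(t_{k+1}^-) - Z_1^{\mathrm{aux}}(t_{k+1}^-)|^q\bigr)^{1/q}$, i.e. to show $\mathbb{E}\,|Z_1^{\mathrm{RBM}}(t_{k+1}^-) - Z_1^{\mathrm{aux}}(t_{k+1}^-)|^q \le C\tau^2$.

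The key steps are as follows. First, write Duhamel/integral forms for both velocity components on $[t_k, t_{k+1}]$ and subtract; the position differences are then controlled by integrating the velocity differences, so everything reduces to the force discrepancy
\[
D_i(t) := \frac{\kappa}{p-1}\sum_{j=1}^p \psi(|X_j-X_i|)(V_j-V_i) \;-\; \kappa\!\int\!\!\int \psi(|x-X_i|)(w-V_i)\,\tilde f_{t_k}(y,w)\,dy\,dw .
\]
Split $D_1$ into (i) a \emph{statistical error}: the empirical average over the $p-1$ independent batchmates minus its expectation against $\tilde f_{t_k}$, which for i.i.d. samples has variance $O(1/(p-1))$ but — crucially — is a martingale-type quantity whose time integral $\int_{t_k}^{t_k+\tau} (\cdot)\,dt$ over a window of length $\tau$ contributes only $O(\tau^2)$ after taking expectations of the square, because the integrand is $O(\tau)$-correlated to itself through the slowly-varying $X$'s and the leading fluctuation term is mean zero; and (ii) a \emph{consistency/drift error}: the change in $\psi(|X_j-X_i|)$ and in the distribution of the batchmates over $[t_k,t]$, which is Lipschitz-controlled and of size $O(t-t_k) = O(\tau)$ pointwise, hence again $O(\tau^2)$ after the time integral. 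Here the hypothesis $\sup_r(|\psi'|+|\psi''|)\le\epsilon_\psi$ together with $f_0\in C^2\cap W^{2,\infty}$ and the compact-support/moment bounds (Lemma \ref{lem: supp}, Proposition \ref{prop: M2v}) give the needed regularity to Taylor-expand $\psi$ and the flow and to guarantee all constants depend only on $\kappa,\epsilon_\psi,R,d$. Assembling, $\mathbb{E}|Z_1^{\mathrm{RBM}}(t_{k+1}^-)-Z_1^{\mathrm{aux}}(t_{k+1}^-)|^q \le C\tau^2$ (the $O(1/(p-1))$ factors are absorbed since $p\ge2$), and taking the $q$-th root yields $\tau^{2/q}$.

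The main obstacle is step (i): showing that the stochastic fluctuation in the force, which is only $O(\tau/\sqrt{p-1})$ in $L^2$ at each instant, produces an $O(\tau^2)$ rather than $O(\tau^{3/2})$ contribution after integrating over the $\tau$-window. This is exactly the "one extra power of $\tau$ from the time-average" phenomenon that makes RBM consistent, and it requires care: one must expand the integrand $\int_{t_k}^{t}D_1(s)\,ds$, observe that to leading order $D_1(s)$ equals its value $D_1(t_k)$ up to an $O(s-t_k)$ correction, and that $\mathbb{E}\,|{\textstyle\int_{t_k}^{t}} D_1(t_k)\,ds|^2 = (t-t_k)^2\,\mathbb{E}|D_1(t_k)|^2$ — but $\mathbb{E}|D_1(t_k)|^2$ is itself $O(1/(p-1))$, and the cross terms with the $O(s-t_k)$ corrections are estimated by Cauchy–Schwarz. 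Mirroring the argument of [\cite{jin2022mean}, Lemma 4.4], one closes a Grönwall estimate on $\phi(t):=\mathbb{E}|Z_1^{\mathrm{RBM}}(t)-Z_1^{\mathrm{aux}}(t)|^q$ on $[t_k,t_{k+1}]$ with forcing $O(\tau^2)$, obtaining \eqref{eq:lmm10old}. The remaining bookkeeping — uniform-in-$k$ moment bounds so $C$ does not degrade with $k$ — follows from Lemma \ref{lem: supp} and Proposition \ref{prop: M2v} applied to $\tilde f$, and from the fact that the comparison is only over one step.
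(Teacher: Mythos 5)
Your plan is a pathwise (synchronous-coupling) comparison of the $p$-particle dynamics \eqref{eq: the RBM limit discrete} with the frozen-field system \eqref{eq: tempsys}, aiming at $\mathbb{E}\,|Z_1^{\mathrm{RBM}}(t_{k+1}^-)-Z_1^{\mathrm{aux}}(t_{k+1}^-)|^q\le C\tau^2$ for every $q\in[1,\infty)$ and then taking the $q$-th root. This central claim fails for $q\in[1,2)$, and that is exactly the regime where the exponent $\tau^{2/q}$ in \eqref{eq:lmm10old} says something stronger than $\tau$. Under your coupling the force discrepancy $D_1(t_k)$ is, conditionally on $Z_1(t_k)$, a mean-zero average of $p-1$ i.i.d.\ bounded terms; since $p$ is a fixed small constant, $\mathbb{E}|D_1(t_k)|$ is of order one (not small), and the velocity gap is essentially $\tau D_1(t_k)+O(\tau^2)$ pathwise. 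The mean-zero structure helps only inside even moments or at the level of laws: it gives $\mathbb{E}|V_1-\hat V_1|^2\le C\tau^2$ (hence $W_2\le C\tau$, consistent with the claim at $q=2$) and, pointwise, $|Z_1-\hat Z_1|\le C\tau$ (hence $W_q\le C\tau$ for all $q$, which beats $\tau^{2/q}$ only when $q\ge 2$). But for $q=1$ one has $\mathbb{E}|Z_1-\hat Z_1|\gtrsim c\,\tau$ for this coupling, not $C\tau^2$: you cannot exploit cancellation inside $\mathbb{E}|\cdot|$, and H\"older only yields $\mathbb{E}|Z_1-\hat Z_1|^q\le C\tau^q$, i.e.\ $W_q\le C\tau$, short of the stated $\tau^{2/q}$. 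So the Gr\"onwall estimate ``with forcing $O(\tau^2)$'' on $\mathbb{E}|Z_1-\hat Z_1|^q$ cannot be closed for $q<2$, and the proof as written does not establish \eqref{eq:lmm10old}. (A side remark: your statement that the instantaneous statistical error is ``$O(\tau/\sqrt{p-1})$ in $L^2$'' is off by a factor $\tau$; it is $O(1/\sqrt{p-1})$, and the single factor of $\tau$ comes only from the time integral. The ``extra power of $\tau$ from time averaging'' in RBM analyses concerns accumulation over many steps with fresh resampling, not a one-step pathwise bound.)

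The paper's proof obtains the $\tau^2$ at the level of laws, not trajectories, which is why the hypotheses $f_0\in(C^2\cap W^{2,\infty})$ and $|\psi'|+|\psi''|\le\epsilon_\psi$ appear: writing $\Tilde{S}(\Tilde{f}_{t_k})=e^{\tau\Tilde{\mathcal{L}}_k^*}\Tilde{f}_{t_k}$ and $\mathcal{Q}_\infty(\Tilde{f}_{t_k})$ as the first marginal of $e^{\tau\Bar{\mathcal{L}}^*}\Tilde{f}_{t_k}^{\otimes p}$, the first-order Taylor terms in $\tau$ coincide after marginalization (the average of the batch force over i.i.d.\ batchmates is exactly the frozen mean-field force), and the second-order remainders are controlled in $L^1$ via the $C^2$ bound (Lemma \ref{lem: C2}) and the $L^1$-contraction of the flows (Lemma \ref{lem: 4.2}), giving $|\mathcal{Q}_\infty(\Tilde{f}_{t_k})-\Tilde{S}(\Tilde{f}_{t_k})|_{TV}\le C\tau^2$; Lemma \ref{lem: TV} together with the compact supports (Lemmas \ref{lem: supp} and \ref{lem: suppS}) then converts this into $W_q\le C\tau^{2/q}$. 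If you want to argue by coupling and still beat order $\tau$ for small $q$, you must exploit the cancellation in expectation rather than in absolute moments — this is what the paper's Lemma \ref{lem: 10} does by taking the expectation over the batchmates inside the force difference before estimating — or else prove a TV-type bound as in the appendix; your current decomposition into ``statistical'' and ``drift'' errors does neither.
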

The proof relies on the PDE analysis on the evolution equations on $\mathcal{Q}_\infty (\Tilde{f}_{t_k})$ and $\Tilde{S}(\Tilde{f}_{t_k}),$ using the corresponding operator groups. There are two limitations on this method. One is that more regular assumptions on both the interaction kernel and initial data are needed to guarantee the existence of a unique classical solution $\Tilde{f}_t \in C^2([0,T)\times \mathbb{R}^{2d})$ for \eqref{eq: mfl for cs}. In addition, if one uses the estimate obtained by Lemma \ref{lem: TV} with the total variance bound, the order of $\tau$ is $\frac{2}{q}.$ 
A detailed proof of Proposition \ref{lem: 10old} is given in Appendix \ref{app:semigroup}.

\subsubsection{The mean-field limit of the Cucker-Smale system} \label{subsec: meanCS}

Up to now, we have given a brief analysis of the path ((b),(c)) in the lower right corner of \eqref{graph}. Here, we discuss the upper left corner path in the sense of the Wasserstein distance as well. Denote by 
$$
\mu_t^N:=\sum\limits_{i=1}^N \frac{1}{N}\delta_{z_i(t)}$$ the empirical measure and $\mu_t^R$ the empirical measure derived by the RBM with $\mu_0^R = \mu_0^N.$

\begin{proposition} 
    For the path from (a) to (d) in \eqref{graph}, it holds 
    \begin{equation}
        \mathbb{E} W_2 (\Tilde{f}_t,\mu_t^R)     \le C(\kappa, R,\psi)\sqrt{\frac{\tau}{p-1}} +
        C(R)\cdot\left\{\begin{aligned}
            &N^{-\frac{1}{4}} \quad &\text{if } d<4,\\
        &N^{-\frac{1}{4}}\sqrt{\log(1+N)} \quad &\text{if } d=4,\\
        &N^{-\frac{1}{d}} \quad &\text{if } d>4.
        \end{aligned}\right.
    \end{equation}
\end{proposition}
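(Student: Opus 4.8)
The plan is to use the triangle inequality to split the error $\mathbb{E}W_2(\tilde f_t,\mu_t^R)$ into two pieces governed by the two arrows $(a)$ and $(d)$ in \eqref{graph}:
\begin{equation*}
    \mathbb{E}W_2(\tilde f_t,\mu_t^R)\le \mathbb{E}W_2(\tilde f_t,\mu_t^N) + \mathbb{E}W_2(\mu_t^N,\mu_t^R),
\end{equation*}
where $\mu_t^N$ is the empirical measure of the genuine Cucker--Smale system \eqref{eq: cs} started from the same initial sample as $\mu_0^R$. For the first term, I would invoke the mean-field convergence of the Cucker--Smale flow together with a concentration/quantitative law of large numbers estimate: since by Lemma \ref{lem: supp} the support of $\tilde f_t$ stays in a bounded region (the flocking/compact-support property, using $C_v<\kappa\int_{C_{x0}}^\infty\psi(2s)\,ds$), and the empirical measure of $N$ i.i.d.\ samples from $f_0$ satisfies the classical Fournier--Guillin bounds, one gets the $N^{-1/4}$, $N^{-1/4}\sqrt{\log(1+N)}$, $N^{-1/d}$ trichotomy in dimension $d<4$, $d=4$, $d>4$. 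The stability of the Cucker--Smale dynamics in $W_2$ (a Grönwall argument on the coupled positions and velocities, again using the uniform flocking bound so that the Lipschitz constant of the interaction field is controlled uniformly in time) propagates the initial closeness $W_2(\tilde f_0,\mu_0^N)$ to time $t$ with a constant $C(R)$ that does not blow up, giving the uniform-in-time $C(R)$ prefactor.

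For the second term $\mathbb{E}W_2(\mu_t^N,\mu_t^R)$ — the RBM error at the particle level, arrow $(a)$ — I would follow the now-standard RBM consistency analysis (Jin--Li--Liu \cite{jin2020random} and the Cucker--Smale adaptation \cite{ha2021uniform}). One writes the evolution of $\frac1N\sum_i|X_i-X_i^R|^2 + \frac1N\sum_i|V_i-V_i^R|^2$, and decomposes the batch force minus the full force into a martingale-like fluctuation term plus a consistency term. The key point is that $\mathbb{E}$ of the RBM force conditioned on the configuration equals the full force (the random batches are chosen so the expected interaction is unbiased), so the leading contribution over one time step $\tau$ is the variance of the one-step force, which is $O(\tau/(p-1))$ because each batch has $p$ particles; summing over steps and closing a Grönwall inequality — with constants again controlled by the uniform-in-time flocking bounds on both the true and the RBM systems (here \cite{ha2021uniform} gives uniform-in-time flocking for the RBM Cucker--Smale model) — yields $\mathbb{E}W_2(\mu_t^N,\mu_t^R)\le C(\kappa,R,\psi)\sqrt{\tau/(p-1)}$, uniformly in $t$.

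The main obstacle I anticipate is making the two Grönwall arguments genuinely \emph{uniform in time}: a naive application produces constants $e^{Ct}$ that destroy the clean statement. This is exactly where the flocking structure must be used rather than bare Lipschitz bounds — one needs the exponential contraction of velocity fluctuations (as in the SDDI of Lemma \ref{lem: SDDI} and the energy dissipation \eqref{eq:CSproperty}) to absorb the linear-in-time growth, both for the deterministic Cucker--Smale stability estimate and for the RBM fluctuation accumulation. A secondary technical point is handling the position variables, whose differences do not decay but grow at most linearly and must be shown not to feed back destructively into the velocity estimate; this is precisely the role of the variant SDDI in Lemma \ref{lem: SDDI}, which I would apply to $\mathcal{X}(t)=$ (root-mean-square position error) and $\mathcal{V}(t)=$ (root-mean-square velocity error). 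Finally, one should note the result is stated without an $\mathbb{E}$ on the right side and holds in expectation over the random batches; I would be careful that the Fournier--Guillin term, which carries no batch randomness, and the RBM term combine additively as displayed.
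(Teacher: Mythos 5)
Your proposal follows essentially the same route as the paper: the same triangle-inequality split into the arrow (a) term and the arrow (d) term, with the RBM particle-level error handled by the estimate of \cite{ha2021uniform} (Theorem 3.2, which you propose to rederive rather than cite), and the mean-field term handled by the uniform-in-time Cucker--Smale stability of \cite{ha2018uniform} combined with the Fournier--Guillin rate \cite{fournier2015rate} for the initial empirical measure, which is exactly how the paper obtains the $d<4$, $d=4$, $d>4$ trichotomy. The only slight mismatch is your insistence on a fully uniform-in-time constant for arrow (a): the paper's own argument controls the position error by $\sqrt{\sum_i|V_i^R-V_i|^2}\,T$ and so carries a $(1+T)$ factor rather than a genuinely time-uniform bound, but this does not affect the correctness of your approach.
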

\begin{proof}
    We first consider the procedure (a). For the first $\tau\to 0$ limit, by the definition of the $W_q$ distance, one has
    \begin{equation*}
        W_2^2 (\mu_t^R, \mu_t^N) 
        \le \frac{1}{N} \sum\limits_{i=1}^N (|X_i^R - X_i|^2 + |V_i^R - V_i|^2).
    \end{equation*}
    Based on [\cite{ha2021uniform}, Theorem 3.2], one has
    \begin{equation*}
        \mathbb{E}\left(\frac{1}{N}\sum\limits_{i-1}^N |V_i^R (t) - V_i (t)|^2\right)
        \le C\left(\frac{1}{p-1}-\frac{1}{N-1}\right)\tau + C\tau^2 + C(1+\tau) \exp{(-\kappa\psi_0 t)},
    \end{equation*}
    for some constant C depending on $\kappa, R$ and $\psi.$
    Note that
    \begin{equation*}
    \frac{d}{dt} \sum\limits_{i=1}^N |X_i^R - X_i|^2 \le
    2\sqrt{\sum\limits_{i=1}^N |X_i^R - X_i|^2} \sqrt{\sum\limits_{i=1}^N |V_i^R - V_i|^2},
    \end{equation*}
    by the Cauchy-Schwarz inequality and
    \begin{equation*}
        \sqrt{\sum\limits_{i=1}^N |X_i^R - X_i|^2} \le \sqrt{\sum\limits_{i=1}^N |V_i^R - V_i|^2} T.
    \end{equation*}
    It thus holds
    \begin{multline*}
    \mathbb{E} W_2 (\mu_t^N,\mu_t^R) \le  \mathbb{E} \frac{1}{\sqrt{N}}\sqrt{\sum\limits_{i=1}^N |X_i^R - X_i|^2 + \sum\limits_{i=1}^N |V_i^R - V_i|^2} \\ 
    \le  \sqrt{C \tau \left(\frac{1}{p-1}-\frac{1}{N-1}\right)}(1+T).
    \end{multline*}.

    Next, for the procedure (d), by the estimate of [\cite{ha2018uniform}, Corollary 1], one has $W_q (\mu_t^N,\Tilde{f}_t) \le C(\kappa,R)W_q(\mu_0^N,\Tilde{f}_0).$ 
    In addition, according to the convergence rate of the empirical measure in [\cite{fournier2015rate}, Theorem 1], with $f_0$ compactly supported, one has 
    $$
    \mathbb{E} W_q^q (f_0,\mu_0^N) \le C(R)\cdot \left\{
    \begin{aligned}
        &N^{-\frac{1}{2}} \quad &\text{if } q>\frac{d}{2},\\
        &N^{-\frac{1}{2}}\log(1+N) \quad &\text{if } q=\frac{d}{2},\\
        &N^{-\frac{q}{d}} \quad &\text{if } q<\frac{d}{2}.
    \end{aligned}\right.
    $$

    Therefore, under the same assumption of Theorem \ref{thm: 1} and Theorem \ref{thm: 2}, for the path from (a) to (d), one has 
    \begin{equation*}
    \begin{aligned}
         \mathbb{E} W_2 (\Tilde{f}_t,\mu_t^R) &\le  \mathbb{E} \left[W_2 (\mu_t^N,\mu_t^R) + W_2 (\Tilde{f}_t,\mu_t^N)\right]    
         \le C(\kappa,R,\psi)\sqrt{\frac{\tau}{p-1}} + C(\kappa,R) \mathbb{E} W_2 (f_0,\mu_0^N)\\
         &\le C(\kappa,R,\psi)\sqrt{\frac{\tau}{p-1}} + C(\kappa,R)\left(\mathbb{E} W_2^2 (f_0,\mu_0^N)\right)^{\frac{1}{2}}.
    \end{aligned}
    \end{equation*}
\end{proof}


\section{Proof of Theorem \ref{thm: 1}}\label{sec: mainpf1}

    In this section, we prove Theorem \ref{thm: 1} by an observation in combinatorics. 
    Recalling the critical quantity $\epsilon_k$ we mentioned above Theorem \ref{thm: 1}, we first give the Lemmas \ref{lem: 2ek}-\ref{lem: thm3.2}, for the proof of Theorem \ref{thm: 1}, and show the mathematical definition of $\epsilon_k$ in Section \ref{subsec: def of ek}.
    The proof of Lemmas \ref{lem: 2ek}-\ref{lem: thm3.2} are shown in Sections \ref{subsec: lem4}-\ref{subsec: lem5} respectively.
    Finally, Theorem \ref{thm: 1} will be proved in Section \ref{subsec: pfthm1} by combining Lemma \ref{lem: 2ek}, Lemma \ref{lem: thm3.2} and an estimate on the diameters of the compact support in space and velocity, shown in Lemma \ref{ineq: coarseD}.

    
    \begin{lemma}\label{lem: 2ek}
        Under the assumption and notation of Theorem \ref{thm: 1}, it holds
            \begin{equation*}
                |\mathcal{Q}_\infty^k (f_0) - \mathcal{Q}_N^{(k)} (f_0)|_{TV} \le 2\epsilon_k,
            \end{equation*}
        where $\epsilon_k$ is a positive constant  with respect to $N.$
    \end{lemma}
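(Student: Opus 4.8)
The plan is to exploit the notion of a \emph{clean} particle, set up precisely in Section \ref{subsec: def of ek}, together with the elementary bound that the total variation distance between two probability measures never exceeds $2$. Let $A_k$ denote the event that particle $1$ is clean at $t_k^-$, so that by definition $\mathbb{P}(A_k^c)=\epsilon_k\ge 0$. Since the one-step evolution in Algorithm \ref{algo: the RBM} is deterministic once the random batches on $[t_{k-1},t_k)$ are fixed, $Z_1^R(t_k)$ is a measurable function of the i.i.d.\ initial data and of the batch assignments, and $A_k$ is an event in that same probability space; in particular $\epsilon_k$ depends only on $N$ (and $p$, $k$), not on $f_0$, and we may condition on $A_k$. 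Write $\nu_k^{\mathrm{cl}}:=\mathrm{Law}(Z_1^R(t_k)\mid A_k)$ and $\nu_k^{\mathrm{dir}}:=\mathrm{Law}(Z_1^R(t_k)\mid A_k^c)$.

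The crux is the identity $\nu_k^{\mathrm{cl}}=\mathcal{Q}_\infty^k(f_0)$. The intuition is that on $A_k$ every interaction that has influenced particle $1$ up to time $t_k$ --- directly, or indirectly along the ``interaction tree'' rooted at particle $1$ --- occurred between particles that were mutually independent at the moment of interaction and carried the correct marginal law, which is exactly the configuration that Algorithm \ref{algo: the RBM mfl} (equivalently, the $p$-particle system \eqref{eq: the RBM limit discrete} with fresh i.i.d.\ partners resampled at every step) produces by construction. I would make this rigorous by induction on $k$, coupling the $N$-particle RBM with the mean-field RBM of particle $1$ so that, on $A_k$, the $p-1$ RBM batchmates of particle $1$ on $[t_{k-1},t_k)$ --- and, recursively, their own clean batchmates at earlier steps --- play the role of the i.i.d.\ draws of Algorithm \ref{algo: the RBM mfl}; on $A_k$ the two trajectories of particle $1$ then coincide, so the conditional laws agree. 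The definition of ``clean'' is tailored precisely so that this backward propagation of independence along the interaction tree goes through.

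Granting the identity, the mixture decomposition
\begin{equation*}
    \mathcal{Q}_N^{(k)}(f_0)=(1-\epsilon_k)\,\nu_k^{\mathrm{cl}}+\epsilon_k\,\nu_k^{\mathrm{dir}}
    =(1-\epsilon_k)\,\mathcal{Q}_\infty^k(f_0)+\epsilon_k\,\nu_k^{\mathrm{dir}}
\end{equation*}
yields
\begin{equation*}
    \mathcal{Q}_N^{(k)}(f_0)-\mathcal{Q}_\infty^k(f_0)=\epsilon_k\bigl(\nu_k^{\mathrm{dir}}-\mathcal{Q}_\infty^k(f_0)\bigr),
\end{equation*}
and taking total variation, together with $|\mu-\nu|_{TV}\le 2$ for probability measures $\mu,\nu$, gives $|\mathcal{Q}_N^{(k)}(f_0)-\mathcal{Q}_\infty^k(f_0)|_{TV}\le 2\epsilon_k$. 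Positivity of $\epsilon_k$ for finite $N$ is clear, since there is always a nonzero chance that two batchmates of particle $1$ have interacted at an earlier step; the quantitative decay $\epsilon_k=\mathcal{O}(N^{-1})$ is deferred to Lemma \ref{lem: thm3.2}.

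The main obstacle is the second step: turning the heuristic that ``conditioning on cleanliness decouples the $N$-particle correlations and reduces the RBM dynamics of particle $1$ to its mean-field dynamics'' into a genuine proof. This is where the combinatorial definition of ``clean'' must be invoked with care and where the induction over time steps, propagating independence backward along the interaction tree of particle $1$, has to be executed.
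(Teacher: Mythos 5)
Your proposal is correct and takes essentially the same route as the paper: the paper likewise conditions on the batch sequences for which particle $1$ is clean, identifies the clean-conditioned law of $Z_1^R(t_k)$ with $\mathcal{Q}_\infty^k(f_0)$ (its Lemma \ref{lem:app 3.2}, proved by exactly the induction you sketch, namely that cleanliness forces the batch of particle $1$ to be i.i.d.\ with joint law $\mathcal{Q}_\infty^{m-1}(f_0)^{\otimes p}$ at $t_{m-1}$, which then evolves by the $p$-particle Liouville equation \eqref{the RBM Liouville}), and concludes from the mixture decomposition that the total variation of $\epsilon_k\bigl(\nu_k-\mathcal{Q}_\infty^k(f_0)\bigr)$ is at most $2\epsilon_k$. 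The only cosmetic difference is that you phrase the key identity as a pathwise coupling while the paper argues directly on distributions (and, as a trivial aside, $\epsilon_0=\epsilon_1=0$, so your remark on strict positivity is not needed and not quite accurate for small $k$, though it does not affect the bound).
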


    \begin{lemma}[\cite{jin2022mean}, Theorem 3.2]\label{lem: thm3.2}
        For any fixed $k,$ with the assumption and notation of Theorem \ref{thm: 1}, it holds that $\epsilon_k \le \mathcal{O}(N^{-1}).$
    \end{lemma}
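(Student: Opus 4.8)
The plan is to exploit the single mechanism by which statistical independence is destroyed in the finite-$N$ Random Batch dynamics of \eqref{eq: the RBMcs}: since the initial data are i.i.d.\ and the only fresh randomness entering the scheme is the batch partition at each step, two particles can become dependent only if they have shared a batch, directly or transitively, at some earlier step. I would therefore track a ``contaminated set'' of particles entangled with particle $1$ along its history, show that for fixed $k$ this set has size bounded independently of $N$, and bound by a union bound the probability that it ever produces a dependency that violates cleanness.

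First I would make cleanness precise through a contamination process. Set $C_0=\{1\}$, and given $C_{j-1}$ let $C_j$ be the union of $C_{j-1}$ with every batch at the $j$-th step that meets $C_{j-1}$. I declare particle $1$ \emph{clean at $t_k^-$} exactly when, for every step $j\le k$, no batch intersecting $C_{j-1}$ contains two distinct elements of $C_{j-1}$ (in particular the batch of particle $1$ at step $j$ contains no element of $C_{j-1}$ other than particle $1$ itself). One checks that under these ``no-collision'' events the batchmates of particle $1$ at every step are mutually independent and independent of particle $1$, which is the hypothesis feeding Lemma \ref{lem: 2ek} and the $\mathcal{Q}_\infty$ dynamics of Algorithm \ref{algo: the RBM mfl}, where at each step genuinely fresh i.i.d.\ copies are supplied. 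Next, a deterministic size bound: on the no-collision event one has $|C_j|\le p\,|C_{j-1}|$, hence $|C_k|\le p^{k}$, a constant independent of $N$ (any $N$-independent bound suffices here).

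The probabilistic estimate is then an elementary union bound. Conditioning on the batch partitions up to step $j-1$, the batch at step $j$ containing any prescribed particle is a uniformly random $(p-1)$-subset of the remaining $N-1$ particles, so a second prescribed particle joins it with probability $\tfrac{p-1}{N-1}$. Hence, on the event that the first $j-1$ steps were collision-free (so that at most $p^{k}$ particles are relevant), the chance that step $j$ creates a forbidden collision is at most $\binom{p^{k}}{2}\tfrac{p-1}{N-1}$. Summing over $j=1,\dots,k$,
\[
\epsilon_k \;\le\; k\binom{p^{k}}{2}\,\frac{p-1}{N-1}\;=\;\mathcal{O}(N^{-1})
\]
for fixed $k$, as claimed; equivalently one can phrase this as the recursion $\epsilon_j\le \epsilon_{j-1}+C(p,j)N^{-1}$ and iterate.

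The main obstacle is the step above where the combinatorial no-collision event must be shown to genuinely imply the probabilistic notion of cleanness, namely mutual independence of \emph{all} batchmates of particle $1$ along its entire history. This requires careful bookkeeping of how dependencies propagate transitively through the second-order coupling of position and velocity in \eqref{eq: the RBM limit discrete} (a particle's state after an interaction depends on its batchmates' positions and velocities, which then travel to future batches), and a verification that conditioning on past partitions leaves the next partition uniform, so that the $\tfrac{p-1}{N-1}$ estimate is legitimate. Once the contamination process is correctly defined, the remaining estimate is routine, and it is precisely the $N$-independence of $|C_k|$ for fixed $k$ that converts the union bound into the $\mathcal{O}(N^{-1})$ rate.
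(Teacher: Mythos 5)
There is a genuine gap, and it sits exactly at the step you flagged as the ``main obstacle.'' Your contaminated set $C_j$ is built \emph{forward} from particle $1$ only, and your no-collision event only forbids two members of $C_{j-1}$ from sharing a batch. But the quantity $\epsilon_k=\mathbb{P}(1\notin A_k)$ in Lemma \ref{lem: thm3.2} refers to cleanness in the sense of Definition \ref{def:clear}, which also requires (i) each incoming batchmate of particle $1$ to be clean itself, and (ii) the \emph{backward} history lists $L_j^{(k)}$ of distinct batchmates to be pairwise disjoint. These conditions can fail entirely outside your set $C_{j-1}$. Concretely, take $p=3$: at step $1$ let particle $1$ be in the batch $\{1,a,b\}$ and let $j,j'$ share the batch $\{j,j',c\}$; at step $2$ let the batch of particle $1$ be $\{1,j,j'\}$. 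Your event holds (the only element of $C_1=\{1,a,b\}$ in that batch is $1$), so you would declare particle $1$ clean, yet $L_j^{(1)}\cap L_{j'}^{(1)}\neq\emptyset$, the joint law of $(Z_j,Z_{j'})$ at $t_1$ is not a product, and the law of $Z_1(t_2^-)$ is no longer exactly $\mathcal{Q}_\infty^2(f_0)$. Since your event is \emph{weaker} than cleanness, your union bound controls the probability of the wrong (smaller) bad event and does not bound $\epsilon_k$; this also breaks the input needed for Lemma \ref{lem: 2ek}, which uses that conditionally on cleanness $\mathrm{Law}(Z_1(t_k))=\mathcal{Q}_\infty^k(f_0)$ (Lemma \ref{lem:app 3.2}).

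The mechanical parts of your estimate are fine (the $N$-independent size bound $p^k$, the $\tfrac{p-1}{N-1}$ pairwise same-batch probability for a uniform partition, the union bound over steps), and the final order $\mathcal{O}(N^{-1})$ is the right one; what is missing is the correct event. The paper repairs exactly this by arguing recursively: with $B_k$ the event that the backward lists $L_j^{(k)}$ of all members of particle $1$'s batch are pairwise disjoint, one shows
\begin{equation*}
1-\epsilon_{k+1}\;\ge\;\mathbb{P}(B_k)\,(1-\epsilon_k)^p,
\end{equation*}
where the factor $(1-\epsilon_k)^p$ (obtained via a symmetry/conditional-independence argument) accounts for the batchmates' own cleanness -- the part your forward cluster cannot see -- and $\mathbb{P}(B_k)\ge 1-\mathcal{O}(N^{-1})$ is proved by an equivalent \emph{backward} construction of the lists, choosing at each stage $p-1$ particles avoiding the at most $\ell p^k$ already-used ones. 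If you want to keep your forward bookkeeping, you would have to enlarge the cluster, at the moment any new particle joins, by its entire backward history $L_j^{(\cdot)}$ (again of size at most $p^j$) and forbid collisions among those histories; doing so essentially reconstructs the paper's event $B_k$ together with the induction on $\epsilon_k$.
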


\subsection{Prerequisites}\label{subsec: def of ek}
    To give a detailed explanation of $\epsilon_k$ in Theorem \ref{thm: 1}, we introduce the concept ``clean'' in Definition \ref{def:clear}. First, for each particle $i$, we define a sequence of lists $\{L_i^{(k)}\}$ associated with $i$, given as follows:
    \begin{enumerate}
        \item $L_i^{(0)} = \{i\};$
        \item For $k\ge 1,$ let $\mathcal{C}_q^{(k-1)}$ be the batch that particle $i$ stays in for $t\in[t_{k-1},t_k),$ then
        \begin{equation*}
            L_i^{(k)} = \cup_{j\in \mathcal{C}_q^{(k-1)}} L_j^{(k-1)}.
        \end{equation*}
    \end{enumerate}
    Here, $L_i^{(k)}$ can be viewed as the particles that have impacted $i$ for $t < t_k.$ Clearly, a particle $i_1\in L_i^{(k)}$ might not have been a batchmate of $i.$ It could have been a batchmate of $i_2,$ and then $i_2$ was a batchmate of $i$ at some time. The important observation is that if $L_i^{(k)}$ and $L_j^{(k)}$ do not intersect for a given sequence of random batches, then particles $i$ and $j$ are independent at $t_k^-.$ Note that we are not claiming all particles in $L_j^{(k)}$ are independent of those in $L_j^{(k)}$ at $t_k^-.$ In fact, it is possible that some $i_1 \in L_i^{(k)}$ and $j_1 \in L_j^{(k)}$ are in the same batch on $[t_{k-1}, t_k).$ However, $i_1$ and $j_1$ must be independent at the times when they were added to the batches that eventually impact $i,j$ at $t_k^-.$ Then we are motivated to define the following.
    
    \begin{definition}\label{def:clear}
        We say the particle $i$ is clean on $[t_k,t_{k+1})$ if the batch $\mathcal{C}_q^{(k)}$ that contains $i$ at $t_k^+$ satisfies the following:
        \begin{enumerate}[(1)]
            \item any $j\in\mathcal{C}_q^{(k)}$ is clean at $t_k^-$;
            \item any $j,\ell \in\mathcal{C}_q^{(k)}$ with $j\ne \ell,$ $L_j^{(k)}$ and $L_\ell^{(k)}$ do not intersect.
        \end{enumerate}
    \end{definition}
    In other words, a particle $i$ is ``clean" at $t_k^-$ if its batchmates at $t<t_k$ were mutually independent and independent to $i$ when they interacted.

    Let $A_k$ denote the set of particles that are clean at $t_k^-.$ Then
    \begin{equation*}
        A_0 = \{1,\cdots, N\}.
    \end{equation*}
    For $k\ge 1,$ one has 
    \begin{multline}
        A_k = \Big\{ i \in A_{k-1}: i\in \mathcal{C}_q^{(k-1)},\, \forall j,\ell \in\mathcal{C}_q^{(k-1)},\, j\ne \ell ,\\
        j\in A_{k-1}, \ell \in A_{k-1},\, L_j^{(k-1)}\cap L_\ell^{(k-1)} = \emptyset \Big\}.
    \end{multline}
    Now we define
    \begin{equation}
        \epsilon_k := \mathbb{P}(1\notin A_k).
    \end{equation}
    Note that by symmetry, $\epsilon_k$ is also the probability that particle $i$ is not clean.

\subsection{Proof of Lemma \ref{lem: 2ek}}\label{subsec: lem4}

    We first introduce Lemma \ref{lem:app 3.2}.
     \begin{lemma}\label{lem:app 3.2}
        Consider a fixed sequence of divisions of random batches $\{\mathcal{C}^{(\ell)}\}_{\ell\le k-1}.$
        \begin{itemize}
            \item It holds that
                \begin{equation*}
                    |L_i^{(k)}|\le p^k,
                \end{equation*}
                and the particle $i$ is clean at $t_k^-$ if and only if the equality holds.
            \item The distribution of $Z_i$ (Recall $Z_i := (X_i,V_i).$) for a clean particle $i$ at $t_k^-$ is $\mathcal{Q}_\infty^k (f_0).$
        \end{itemize}
        Here the symbol $|A|$ means the cardinality of a set $A.$
    \end{lemma}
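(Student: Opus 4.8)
\textbf{Proof proposal for Lemma \ref{lem:app 3.2}.}

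The plan is to prove the two bullets in order, treating the cardinality bound and the clean-iff-equality characterization together, since they come from the same recursive count, and then deducing the distributional statement as a consequence.

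First I would prove $|L_i^{(k)}|\le p^k$ by induction on $k$. The base case $k=0$ is immediate since $L_i^{(0)}=\{i\}$ has cardinality $1=p^0$. For the inductive step, recall $L_i^{(k)} = \bigcup_{j\in \mathcal{C}_q^{(k-1)}} L_j^{(k-1)}$, where $\mathcal{C}_q^{(k-1)}$ is the batch of size $p$ containing $i$ on $[t_{k-1},t_k)$. By subadditivity of cardinality under unions and the inductive hypothesis $|L_j^{(k-1)}|\le p^{k-1}$ for each of the $p$ indices $j$, we get $|L_i^{(k)}|\le p\cdot p^{k-1}=p^k$. The same computation shows exactly when equality can hold: equality in $k$ forces equality in the subadditivity bound, i.e. the sets $\{L_j^{(k-1)}\}_{j\in\mathcal{C}_q^{(k-1)}}$ must be pairwise disjoint, and it forces $|L_j^{(k-1)}|=p^{k-1}$ for every $j\in\mathcal{C}_q^{(k-1)}$. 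By the inductive hypothesis (the clean-iff-equality part applied at level $k-1$), the latter says every $j\in\mathcal{C}_q^{(k-1)}$ is clean at $t_{k-1}^-$, and together with pairwise disjointness of the lists this is precisely the two conditions (1) and (2) in Definition \ref{def:clear} for $i$ to be clean on $[t_{k-1},t_k)$, i.e. clean at $t_k^-$. This closes the induction for the first bullet.

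For the second bullet, the idea is that a clean particle $i$ at $t_k^-$ is, by the disjointness structure just established, influenced only by a tree of $p^k$ mutually independent particles whose interaction pattern exactly reproduces the recursive batch-of-$p$ dynamics defining $\mathcal{Q}_\infty$. Concretely, I would argue by induction on $k$ that the joint law of $(Z_j)_{j\in\mathcal{C}_q^{(k-1)}}$ at $t_{k-1}^+$, for the batch containing a clean particle $i$, is the $p$-fold tensor product $\big(\mathcal{Q}_\infty^{k-1}(f_0)\big)^{\otimes p}$: each $Z_j$ is individually distributed as $\mathcal{Q}_\infty^{k-1}(f_0)$ by the inductive hypothesis (each such $j$ is clean at $t_{k-1}^-$), and they are jointly independent because their influence lists $L_j^{(k-1)}$ are pairwise disjoint, so they depend on disjoint collections of the i.i.d.\ initial data $Z_\cdot^R(0)$. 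Evolving this tensor-product initial datum by the batch-$p$ Cucker--Smale ODE \eqref{eq: the RBM limit discrete} on $[t_{k-1},t_k)$ and marginalizing onto the $i$-th coordinate is, by definition, exactly the map $\mathcal{Q}_\infty$ applied to $\mathcal{Q}_\infty^{k-1}(f_0)$; hence $Z_i$ at $t_k^-$ has law $\mathcal{Q}_\infty^k(f_0)$. The base case $k=1$ is the statement that $Z_i^R(0)$ are i.i.d.\ from $f_0=f(\cdot;0)$, which holds by hypothesis.

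The main obstacle I expect is the second bullet: one must carefully justify that disjointness of the influence lists really does yield \emph{joint} independence of the batchmates (not merely pairwise), and that ``influence of $j$ up to $t_{k-1}$ is a function of the initial data indexed by $L_j^{(k-1)}$'' is literally true — this requires noting that on $[t_{\ell-1},t_\ell)$ the trajectory of any particle in a batch is a deterministic function of the positions and velocities of that whole batch at $t_{\ell-1}^+$, so unrolling the recursion expresses $Z_j(t_{k-1}^-)$ as a (measurable, batch-sequence-dependent) function of $\{Z_m^R(0): m\in L_j^{(k-1)}\}$ alone. Once that functional-dependence claim is spelled out, joint independence follows from independence of disjoint blocks of the i.i.d.\ family $\{Z_m^R(0)\}$, and the rest is bookkeeping matching the construction in Algorithm \ref{algo: the RBM mfl}.
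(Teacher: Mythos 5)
Your proposal is correct and follows essentially the same route as the paper: induction on $k$, with cleanliness yielding independence of the batchmates, hence a tensor-product joint law $\mathcal{Q}_\infty^{k-1}(f_0)^{\otimes p}$ at $t_{k-1}$, which evolved by the batch-$p$ Liouville dynamics and marginalized gives $\mathcal{Q}_\infty^{k}(f_0)$. The paper only sketches this as a "straightforward induction," so your additional details (the subadditivity/equality characterization for the first bullet, and deducing joint independence from the fact that each $Z_j(t_{k-1}^-)$ is a measurable function of the i.i.d.\ initial data indexed by the disjoint lists $L_j^{(k-1)}$) are exactly the bookkeeping the paper leaves implicit.
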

    \begin{proof}
        The proof is a straightforward induction. Here, let us just briefly mention the proof of the second claim. For $k=0,$ the statement is trivial. Now, suppose the statement is true for all $k \le m-1.$ We now consider $k = m.$
        
        For the given sequence of random batches $\{\mathcal{C}^{(\ell)}\}_{\ell\le k-1},$ that a particle $i$ is clean at $t_m^-$ means that on $[t_{m-1}, t_m),$ the particles in the batch for $i$ are independent of $i$ at $t_{m-1}.$ By the induction assumption, the distribution of one particle at $t_{m-1}$ is given by $\mathcal{Q}_\infty^{m-1}(f_0).$ By the independence, the joint distribution of them at $t_{m-1}$ is therefore
        \begin{equation*}
            f^{(p)} (\cdots, t_{m-1}) = \mathcal{Q}_\infty^{m-1}(f_0)^{\otimes p}.
        \end{equation*}
        From $t_{m-1}$ to $t_m,$ the evolution of the joint distribution obeys  equation \eqref{the RBM Liouville}. Hence, at $t_m^-,$ the distribution of particle $i$ is given by $\mathcal{Q}_\infty^m(f_0).$ 
    \end{proof}

    Now we begin to prove Lemma \ref{lem: 2ek}.

    \begin{proof}[Proof of Lemma \ref{lem: 2ek}]
        By Lemma \ref{lem:app 3.2}, one has
        \begin{equation}\label{eq:B4}
            \mathcal{Q}_N^{(k)} (f_0) = \mathbb{P}(1\in A_k)\mathcal{Q}_\infty^k (f_0)
            + \mathbb{P}(1\notin A_k)\nu_k,
        \end{equation}
        for some probability measure $\nu_k.$ To see this, consider all possible sequences of random batches. Only the first $k$ divisions of batches (i.e. ones at $t_0,\cdots, t_{k-1}$) will affect the distribution at $t_k.$ This subsequence (the first $k$ divisions) can take only finitely many values, and let $\{c^\ell\}_{\ell\le k-1}$ be such values. Then, for any $E\subset \mathbb{R}^{d}$ that is Borel measurable,
        \begin{equation*}
            \mathcal{Q}_N^{(k)} (f_0)[E] = \sum\limits_{\{\mathcal{C}^\ell = c^\ell , \ell \le k-1\}} \mathbb{P}(\mathcal{C}^\ell = c^\ell , \,\ell \le k-1)
            \mathbb{P}(Z_1 \in E \,|\, \mathcal{C}^\ell = c^\ell , \,\ell \le k-1 ).
        \end{equation*}
        Lemma \ref{lem:app 3.2} tells us if $\{c^\ell \}_{\ell \le k-1}$ is a value such that particle $1$ is clean, then
        \begin{equation*}
            \mathbb{P}(Z_1 \in E \,|\, \mathcal{C}^\ell = c^\ell , \,\ell \le k-1 ) = \mathcal{Q}_\infty^k (f_0)[E].
        \end{equation*}
        Hence,
        \begin{equation*}
            \begin{aligned}
            \mathcal{Q}_N^{(k)} (f_0)[E] &=  \mathbb{P}(1\in A_k)\mathcal{Q}_\infty^k (f_0)[E] \\
            &\quad+ \sum\limits_{\{\mathcal{C}^\ell = c^\ell , \ell \le k-1\},\, 1\notin A_k} \mathbb{P}(\mathcal{C}^\ell = c^\ell , \,\ell \le k-1)
            \mathbb{P}(Z_1 \in E \,|\, \mathcal{C}^\ell = c^\ell , \,\ell \le k-1 )\\
            &= \mathbb{P}(1\in A_k)\mathcal{Q}_\infty^k (f_0)[E] + \mathbb{P}(1\notin A_k)\nu_k (E),
            \end{aligned}
        \end{equation*}
        with
        \begin{equation*}
            \nu_k (E) = \sum\limits_{\{\mathcal{C}^\ell = c^\ell , \ell \le k-1\},\, 1\notin A_k} \frac{\mathbb{P}(\mathcal{C}^\ell = c^\ell , \,\ell \le k-1)}{\mathbb{P}(1\notin A_k)}
            \mathbb{P}(Z_1 \in E \,|\, \mathcal{C}^\ell = c^\ell , \,\ell \le k-1 ).
        \end{equation*}
        Clearly, $\nu_k$ is a convex combination of some conditional marginal distributions of $Z_1,$ each being Law$(Z_1)$ conditioning on a particular sequence of batches for $\{1\notin A_k\}.$ Hence $\nu_k$ is a probability measure. 
        
        By \eqref{eq:B4}, it holds that
        \begin{equation*}
            |\mathcal{Q}_\infty^k(f_0) - \mathcal{Q}_N^{(k)} (f_0)| \le 
            (1-\mathbb{P}(1\in A_k))\mathcal{Q}_\infty^k (f_0) + \mathbb{P}(1\notin A_k)\nu_k
            = \epsilon_k (\mathcal{Q}_\infty^k (f_0) + \nu_k).
        \end{equation*}
        Therefore, the total variation distance between the two measures is controlled by
        \begin{equation*}
            |\mathcal{Q}_\infty^k(f_0) - \mathcal{Q}_N^{(k)} (f_0)|_{TV} \le 
            (1-\mathbb{P}(1\in A_k)) + \mathbb{P}(1\notin A_k) = 2\epsilon_k.
        \end{equation*}
    \end{proof}

\subsection{Proof of Lemma \ref{lem: thm3.2}}\label{subsec: lem5}
    Furthermore, we provide the proof of Lemma \ref{lem: thm3.2} which is a simple variant of [\cite{jin2022mean}, Theorem 3.2].

    \begin{proof}[Proof of Lemma \ref{lem: thm3.2}]
        First of all, clearly, one has
        \begin{equation*}
            \epsilon_0  = 1-1 = 0.
        \end{equation*}
        Now, one can do induction on $k.$ Assume
        \begin{equation*}
            \lim\limits_{N\to \infty} \epsilon_k = 0,
        \end{equation*}
        and then consider the batches for $t_k \to t_{k+1}^-.$ Assume the batch for particle $1$ is $\mathcal{C}_q^{(k)}.$ Denote
        \begin{equation*}
            B_k = \left\{ \forall j, \ell \in \mathcal{C}_q^{(k)},\, j\ne \ell, \, L_j^{(k)}\cap L_\ell^{(k)} = \emptyset \right\}.
        \end{equation*}
        Let $\mathcal{B} := \mathcal{C}_q^{(k)}\backslash \{1\}$ be the set of other particles that share the same batch with particle $1.$ Then, by definition of $A_{k+1},$ it holds
        \begin{multline}
             \mathbb{P} (1\in A_{k+1}) = \sum\limits_{j_1,\cdots, j_{p-1}} \mathbb{P}(\mathcal{B}=\{j_1,\cdots, j_{p-1}\})\times \\
             \mathbb{P}(B_k \cap \{1\in A_k\} \cap_{\ell =1}^{p-1}\{j_\ell \in A_k\}\,\mid \, \mathcal{B}= \{j_1,\cdots, j_{p-1}\}).
        \end{multline}
         Denote $E:= \{\mathcal{B}=\{j_1,\cdots, j_{p-1}\}\},$ where we omit the dependence in $j_\ell,$ $1\le \ell \le p-1$ for notational convenience. Conditioning on $B_k \cap E$ (i.e., provided that the event $B_k \cap E$ happens), whether the particles are clean or not is independent. Hence,
         \begin{multline*}
             \mathbb{P}(B_k \cap \{1\in A_k\} \cap_{\ell =1}^{p-1}\{j_\ell \in A_k\}\,|\, E) \\
             = \mathbb{P}(B_k | E))\mathbb{P}(\{1\in A_k\} \cap_{\ell = 1}^{p-1}\{j_\ell \in A_k\}|E,B_k)\\
             = \mathbb{P}(B_k|E)\Pi_{\ell=1}^p\mathbb{P}(j_\ell\in A_k|E,B_k),
         \end{multline*}
         where one has set $j_p =1.$ Moreover,
         \begin{equation*}
             \mathbb{P}(j_\ell\in A_k|E,B_k) = \frac{\mathbb{P}(\{ j_\ell \in A_k \} \cap E\cap B_k)}{\mathbb{P}(E\cap B_k)} = \frac{\mathbb{P}(\{1 \in A_k \} \cap E\cap B_k)}{\mathbb{P}(E\cap B_k)} = \frac{\mathbb{P}(\{1 \in A_k \} \cap B_k)}{\mathbb{P}(B_k)}.
         \end{equation*}
         The second and the last equalities are due to symmetry. For the last equality, $\mathbb{P}(\{\mathcal{B} = \{j_1\cdots,j_{p-1}\}\}\cap B_k)$ should be equal for all possible $j_1,\cdots,j_{p-1},$ and the same  is true for the numerator. This actually is a kind of independence. Hence, eventually due to the fact
         \begin{equation*}
             \sum\limits_{j_1,\cdots, j_{p-1}} \mathbb{P} (\mathcal{B} = \{j_1,\cdots, j_{p-1}\}) \mathbb{P}(B_k | \mathcal{B} = \{j_1,\cdots, j_{p-1}\}) = \mathbb{P} (B_k),
         \end{equation*}
        one has
        \begin{equation*}
            1-\epsilon_{k+1} = \mathbb{P} (1\in A_{k+1}) \ge
            \mathbb{P} (B_k) (1-\epsilon_k 
            )^p.
        \end{equation*}
        Hence, it suffices to show
        \begin{equation*}
            \lim\limits_{N\to \infty}\mathbb{P} (B_k) =1.
        \end{equation*}
        To get an estimate for this, we consider the following equivalent way to construct $L_i^{(k)}:$ one starts with $L_i \leftarrow \{i\}$ and repeat the following $k$ times:
        \begin{enumerate}
            \item Set $L_{tmp} \leftarrow L_i$ and $A=\emptyset.$
            \item Loop the following until $L_{tmp}$ is empty.
            \begin{enumerate}
                \item [(a)] Pick a particle $i_1 \in L_{tmp},$ then choose $p-1$ particles from $\{1,\cdots,N\}\backslash A \cup \{i_1\}$ denoted by $\{ i_2,\cdots , i_p\}.$
                \item [(b)] Set $L_i\leftarrow L_i \cup \{ i_2, \cdots, i_p\}.$
                \item [(c)] Set $A\leftarrow A \cup \{ i_1, i_2, \cdots, i_p\}.$
                \item [(d)] Set $L_{tmp} \leftarrow L_{tmp} \backslash \{ i_1, i_2, \cdots, i_p\}.$
            \end{enumerate}
        \end{enumerate}
        In the above, we are actually looking back from $t_{k-1}.$ In the $j$-th iteration, we are constructing batches at $t_{k-j}.$ Hence, this is an equivalent way to construct $L_i^{(k)}.$

        Now, we estimate $\mathbb{P}(B_k)$ by constructing the lists $L_{j_\ell}^{(k)}: 1\le \ell\le p$ for $j_\ell \in \mathcal{C}_q^{(k)}$ using the above procedure. Consider that the lists for $j_1,\cdots,j_{\ell -1}$ have been constructed, which have included at most $(\ell -1)p^k$ particles. Now, for $L_{j_\ell}^{(k)}$ not to intersect with the previous lists, one has to choose particles from $\{1,\cdots,N\}\backslash [\cup_{z=1}^{\ell -1} L_{j_z}^{(k)}\cup A \cup \{i_1\}]$ in 2.(a) step. Conditioning on the specific choices of $L_{j_\ell}^{(k)}:$ $1\le\ell \le p$ and $A$ with 
        \begin{equation*}
            N_1:= |L_{j_\ell}^{(k)}\cup A\cup \{i_1\}|, \, N_2:= |A|,
        \end{equation*}
        this probability is controlled from below by
        \begin{equation*}
            \frac{\tbinom{N- N_1}{p-1}}{\tbinom{N-1- N_2}{p-1}}
            \ge \frac{\tbinom{N- 1-\ell p^k}{p-1}}{\tbinom{N-1}{p-1}}.
        \end{equation*}
        Hence, as $N\to \infty,$
        \begin{equation*}
            \mathbb{P}(B_k)\ge \Pi_{\ell=1}^p \left[\frac{\tbinom{N- 1-\ell p^k}{p-1}}{\tbinom{N-1}{p-1}}\right]^k = 1- \mathcal{O}(N^{-1}).
        \end{equation*}
        Hence, $\lim\limits_{N\to \infty}\epsilon_{k+1}=0$ and the claim follows.
    \end{proof}

\subsection{Proof of Theorem \ref{thm: 1}}\label{subsec: pfthm1}

    Now we can prove Theorem \ref{thm: 1}.
    Recall that the TV distance is defined by
    \begin{equation*}
         |\mu - \nu|_{TV} := \sup_{A\in \mathcal{F}} |\mu(A) - \nu(A)|,
    \end{equation*}
    for two probability measures $\mu$, $\nu$ defined on certain sample space $\Omega$ with events $\mathcal{F}.$

    We denote the diameters of the compact support in the spatial and velocity variables at time $t$ by $\mathcal{D}_X(t)$ and $\mathcal{D}_V(t)$, respectively, i.e.
    \begin{equation}\label{eq:3.13}
        \mathcal{D}_X(t) := \max\limits_{i,j}| X_i-X_j|, \quad \mathcal{D}_V(t) := \max\limits_{i,j}| V_i-V_j|.
    \end{equation}
    For the RBM system \eqref{eq: the RBMcs}, Ha et al. \cite{ha2021uniform} gave an estimate on $\mathcal{D}_{X^R}$ and $\mathcal{D}_{V^R}$ shown in Lemma \ref{ineq: coarseD}. This estimate works on any fixed sequence of batches and we provide the proof in Appendix \ref{ass:A} for completeness.

        \begin{lemma}[\cite{ha2021uniform}, Lemma 2.4]\label{ineq: coarseD}
        Let $X^R,V^R$ be a solution of the RBM model \eqref{eq: the RBMcs} with initial data $f_0$ satisfying 
        $$
        \mathcal{D}_X(0) + \mathcal{D}_V(0)<\infty.
        $$
        Then one has
        $$
        \mathcal{D}_{V^R} (t) \le \mathcal{D}_V(0), \quad \mathcal{D}_{X^{R}} (t)\le\mathcal{D}_X(0) + \mathcal{D}_V(0)t, \quad \forall t \ge 0.
        $$
    \end{lemma}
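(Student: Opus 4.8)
The plan is to prove both bounds by a Danskin/envelope-type differentiation of the two diameters, using \emph{only} the nonnegativity of $\psi$ in \eqref{ass:psi}. First I would record the regularity needed: on each subinterval $[t_{k-1},t_k)$ the random batch partition is frozen, so the right-hand side of \eqref{eq: the RBMcs} is locally Lipschitz and $(X^R,V^R)\in C^1$ there, while $(X^R,V^R)$ is continuous across every node $t_k$; consequently $t\mapsto\mathcal{D}_{V^R}(t)=\max_{i,j}|V_i^R(t)-V_j^R(t)|$ and $t\mapsto\mathcal{D}_{X^R}(t)$, being finite maxima of Lipschitz functions, are locally Lipschitz on $[0,\infty)$ and hence differentiable a.e.

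Next I would show $\mathcal{D}_{V^R}$ is nonincreasing. Fix a (non-node) time $t$ of differentiability and pick a pair $(a,b)$ with $\mathcal{D}_{V^R}(t)=|V_a^R-V_b^R|$; put $e:=(V_a^R-V_b^R)/|V_a^R-V_b^R|$. If some $V_\ell^R$ satisfied $\langle V_\ell^R,e\rangle>\langle V_a^R,e\rangle$ then $|V_\ell^R-V_b^R|\ge\langle V_\ell^R-V_b^R,e\rangle>\mathcal{D}_{V^R}(t)$, a contradiction, so $a$ maximizes and (symmetrically) $b$ minimizes $\ell\mapsto\langle V_\ell^R,e\rangle$. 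Differentiating and inserting \eqref{eq: the RBMcs},
\[
\tfrac12\tfrac{d}{dt}\mathcal{D}_{V^R}^2=\tfrac{\kappa}{p-1}\sum_{\ell\in[a]_k}\psi(|X_\ell^R-X_a^R|)\langle V_a^R-V_b^R,V_\ell^R-V_a^R\rangle-\tfrac{\kappa}{p-1}\sum_{\ell\in[b]_k}\psi(|X_\ell^R-X_b^R|)\langle V_a^R-V_b^R,V_\ell^R-V_b^R\rangle\le0,
\]
since $\psi\ge0$ and $\langle e,V_\ell^R-V_a^R\rangle\le0$, $\langle e,V_\ell^R-V_b^R\rangle\ge0$. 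By continuity of $\mathcal{D}_{V^R}$ this gives $\mathcal{D}_{V^R}(t)\le\mathcal{D}_{V^R}(0)=\mathcal{D}_V(0)$ for all $t\ge0$. (Equivalently: the closed convex hull of $\{V_i^R(t)\}_i$ is nonincreasing, because each $\dot V_i^R$ is a nonnegative combination of the differences $V_j^R-V_i^R$ with $j$ in $i$'s batch, hence points into that hull.)

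For the spatial diameter, the same extremal-pair argument (with $(a,b)$ now a pair realizing $\mathcal{D}_{X^R}(t)=|X_a^R-X_b^R|$) and $\dot X_i^R=V_i^R$ give, at a.e.\ $t$ with $\mathcal{D}_{X^R}(t)>0$,
\[
\Big|\tfrac{d}{dt}\mathcal{D}_{X^R}\Big|=\frac{|\langle X_a^R-X_b^R,V_a^R-V_b^R\rangle|}{|X_a^R-X_b^R|}\le|V_a^R-V_b^R|\le\mathcal{D}_{V^R}(t)\le\mathcal{D}_V(0)
\]
by Cauchy--Schwarz and the previous step; this is precisely the SDDI structure \eqref{origin SDDI} with vanishing decay rate (and the estimate is trivial when $\mathcal{D}_{X^R}(t)=0$). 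Integrating from $0$ to $t$ yields $\mathcal{D}_{X^R}(t)\le\mathcal{D}_X(0)+\mathcal{D}_V(0)\,t$, which completes the proof.

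The main obstacle I expect is purely technical bookkeeping: making the envelope differentiation rigorous — justifying that at a.e.\ $t$ the maximum may be differentiated through an \emph{active} pair, and that the pointwise inequalities survive the finitely many batch-switch times $t_k$, at which both the active pair and the batch partition can jump. This is handled by the continuity of $(X^R,V^R)$ together with the fact that a locally Lipschitz function is the integral of its a.e.\ derivative. No idea beyond $\psi\ge0$ enters; in particular neither the lower bound $\psi_0$ nor the monotonicity in \eqref{ass:psi} is used for this lemma.
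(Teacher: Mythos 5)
Your proposal is correct and follows essentially the same route as the paper's proof in Appendix A: pick an extremal pair realizing $\mathcal{D}_{V^R}(t)$, differentiate the squared difference, use maximality of the pair together with $\psi\ge 0$ to get nonpositivity of the two batch sums, and then bound $\bigl|\tfrac{d}{dt}\mathcal{D}_{X^R}\bigr|\le\mathcal{D}_{V^R}\le\mathcal{D}_V(0)$ and integrate. Your added care about a.e.\ differentiability of the Lipschitz diameters and the batch-switch times only makes explicit what the paper leaves implicit.
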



    \begin{proof}[Proof of Theorem \ref{thm: 1}]
        Denote $z:= (x,v),$ $Z^R_i:=(X_i^R,V_i^R),$ and $|z|^q:=|x|^q + |v|^q.$
        By Lemma \ref{lem: TV}, taking $\mu = \mathcal{Q}_\infty^k (f_0)$ and $\nu = \mathcal{Q}_N^{(k)}(f_0),$ one has
        \begin{equation*}
            W_q (\mathcal{Q}_\infty^k (f_0),\mathcal{Q}_N^{(k)}(f_0)) \le 2^{1-\frac{1}{q}} \left(\int_{\mathbb{R}^{2d}} |z|^q d | \mathcal{Q}_\infty^k (f_0)-\mathcal{Q}_N^{(k)}(f_0)|(z)\right)^{\frac{1}{q}}.
        \end{equation*}

       Note that Lemma \ref{ineq: coarseD} degenerates into the original Cucker-Smale model when $p=N.$ Hence for $t\in[0,t_1),$ any $\{Z_i^{(0)}(t)\}_{i=1}^p$ satisfying \eqref{eq: the RBM limit discrete}, one obtains
       \begin{equation*}\label{eq:DVt1}
           \mathcal{D}_{V^{(0)}}(t)\le\mathcal{D}_{V}(0), \quad \mathcal{D}_{X^{(0)}} (t) \le \mathcal{D}_{X} (0) + \mathcal{D}_{V}(0)t.
       \end{equation*}
       Also, by the assumption of $f_0,$ one has $\Bar{v}=0,$ and therefore $\Bar{x}=0.$ Since \eqref{eq: the RBM limit discrete} holds independent of the choice $Z_i,$ defining 
       \begin{equation*}
       \begin{aligned}
           \mathsf{X}(t) :=& \max \{ |x|  \mid \exists \,v,\, s.t. \,f(x,v;t) >0\},\\
           \mathsf{V}(t) :=& \max \{ |v|  \mid \exists \,x,\, s.t.\, f(x,v;t) >0\},
       \end{aligned}
       \end{equation*}
       one has for $t\in[0,t_1],$
       \begin{equation}\label{eq:sfXV}
           \mathsf{V}(t)\le C, \quad \mathsf{X}(t)\le C(1+t),
       \end{equation}
       where $C$ is a constant depending on $R.$ 
       Then for $t\in[t_1,t_2],$ for the new subsystem $\{Z_i^{(1)}\}_{i=1}^p$ with initial distribution $f_{t_1},$ one can use Lemma \ref{ineq: coarseD} again to get
       \begin{equation*}
       \begin{aligned}
           \mathcal{D}_{V^{(1)}}(t)&\le\mathcal{D}_{V^{(1)}}(0)\le 2\mathsf{V}(t_1) \le C, \\
           \mathcal{D}_{X^{(1)}} (t) &\le \mathcal{D}_{X^{(1)}} (0) + \mathcal{D}_{V^{(1)}}(t-\tau)
           \le 2\mathsf{X}(t_1) + 2\mathsf{V}(t_1)(t-\tau)\le C(1+t),
       \end{aligned}
       \end{equation*}
       and \eqref{eq:sfXV} holds for $t\in[t_1,t_2].$
       By induction, for any $t,$ \eqref{eq:sfXV} holds. Therefore $|\mathsf{X}(t_k)|^q + |\mathsf{V}(t_k)|^q \le C(1+t_k)^q.$       
       
       Also, by Lemma \ref{ineq: coarseD}, it holds $|Z^R_i|^q\le C(1+t_k)^q.$
       Then for any $z\in \text{supp}[| \mathcal{Q}_\infty^k (f_0)-\mathcal{Q}_N^{(k)}(f_0)|],$ $|z|^q\le C(1+t_k)^q.$ Therefore, 
       \begin{equation*}
           \int_{\mathbb{R}^{2d}} |z|^q d | \mathcal{Q}_\infty^k (f_0)-\mathcal{Q}_N^{(k)}(f_0)|(z) 
           \le  C(1+t_k)^q | \mathcal{Q}_\infty^k (f_0)-\mathcal{Q}_N^{(k)}(f_0)|_{TV}.
        \end{equation*}
       
       By Lemma \ref{lem: 2ek}, one has 
       \begin{equation*}
           \int_{\mathbb{R}^{2d}} |z|^q d | \mathcal{Q}_\infty^k (f_0)-\mathcal{Q}_N^{(k)}(f_0)|(z) \le C(1+t_k)^q \epsilon_k.
       \end{equation*}
       Hence $W_q (\mathcal{Q}_\infty^k (f_0),\mathcal{Q}_N^{(k)}(f_0)) \le C(1+t_k)\epsilon_k^{\frac{1}{q}}.$ Then by Lemma \ref{lem: thm3.2}, we finish the proof.
    \end{proof}

    \begin{remark}
        The above analysis is not enough to get the mean-field limit independent of $\tau$ and for fixed $N,$ $\epsilon_k\to 1$ as $k\to\infty.$ This is a limitation of the technique in the paper.
    \end{remark}

\section{Proof of Theorem \ref{thm: 2}}\label{sec: mainpf2}

    Here gives an outline of the proof of Theorem \ref{thm: 2}. First, we show the stability of $\mathcal{Q}_\infty$ in Section \ref{subsec: stability}. Second, an auxiliary system is introduced in Section \ref{subsec: auxiliary sys} as a bridge to estimate the $W_q$ distance between $\mathcal{Q}_\infty(\Tilde{f}_{t_k})$ and $\Tilde{f}_{t_{k+1}}$. 
    Last,  the proof of Theorem \ref{thm: 2} is given in Section \ref{subsec:pfthm2}.

\subsection{Stability of \texorpdfstring{$\mathcal{Q}_\infty$}{}}\label{subsec: stability}

In Lemma \ref{lem: lmmG} and Lemma \ref{lem:G stability}, we give the stability analysis on the operator $\mathcal{Q}_\infty.$
\begin{lemma} \label{lem: lmmG}
    Suppose two probability measures $\mu_1,\mu_2$ satisfying $\int vd\mu_1=\int vd\mu_2,$ with compact supports supp$[\mu_1] ,$ supp$[\mu_2]$ respectively, where supp$[\mu_i]\subset\mathbb{R}^d\times B(0,R_\mu)$, $i=1,2,$ and the constant $R_\mu >0.$ Then it holds
    \begin{equation} \label{eq: lmmG}
        W_q (\mathcal{Q}_\infty (\mu_1) ,\mathcal{Q}_\infty (\mu_2)) \le CW_q(\mu_1,\mu_2), \quad q=[1,\infty),
    \end{equation}
    where $C(\kappa,\psi,R_\mu)$ is a positive constant.
\end{lemma}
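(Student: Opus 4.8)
The plan is to establish the stability estimate \eqref{eq: lmmG} by building a coupling between $\mathcal{Q}_\infty(\mu_1)$ and $\mathcal{Q}_\infty(\mu_2)$ out of an optimal coupling between $\mu_1$ and $\mu_2$, and then tracking how the $p$-particle ODE flow \eqref{eq: the RBM limit discrete} dilates distances over one time step $\tau$. Concretely, let $\pi$ be an optimal transference plan for $W_q(\mu_1,\mu_2)$, and draw $p$ i.i.d.\ pairs $(z_i^1,z_i^2)_{i=1}^p \sim \pi$, so that $(z_i^1)$ are i.i.d.\ $\mu_1$ and $(z_i^2)$ are i.i.d.\ $\mu_2$; this realizes the tensorized laws $\mu_1^{\otimes p}$ and $\mu_2^{\otimes p}$ on the same probability space. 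Run the coupled system \eqref{eq: the RBM limit discrete} from both initial configurations up to time $\tau$, producing $(Z_i^1(\tau))$ and $(Z_i^2(\tau))$. Since $\mathcal{Q}_\infty(\mu_\ell)$ is by construction the law of $Z_1^\ell(\tau^-)$ (marginalizing out particles $2,\dots,p$), the pair $(Z_1^1(\tau),Z_1^2(\tau))$ gives an admissible coupling of $\mathcal{Q}_\infty(\mu_1)$ and $\mathcal{Q}_\infty(\mu_2)$, hence
\[
  W_q^q(\mathcal{Q}_\infty(\mu_1),\mathcal{Q}_\infty(\mu_2)) \le \mathbb{E}\,|Z_1^1(\tau)-Z_1^2(\tau)|^q.
\]

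The core estimate is then a Grönwall argument on the particle differences $\delta_i^X := X_i^1 - X_i^2$, $\delta_i^V := V_i^1 - V_i^2$. Differentiating, $\dot\delta_i^X = \delta_i^V$, and the velocity equation gives $\dot\delta_i^V$ as a sum over the batch of differences of terms $\psi(|X_j-X_i|)(V_j-V_i)$ evaluated along the two trajectories; adding and subtracting, each summand is controlled using the boundedness $\psi\le 1$, the Lipschitz bound $\|\psi\|_{\mathrm{Lip}}<\infty$, and the uniform bounds on positions and velocities. Here is where the hypotheses enter: $\int v\,d\mu_1=\int v\,d\mu_2$ together with $\mathrm{supp}[\mu_i]\subset \mathbb{R}^d\times B(0,R_\mu)$ and Lemma \ref{ineq: coarseD} (applied on $[0,\tau]$, which is just the Cucker–Smale bound for $p$ particles) keep $|V_j-V_i|$ and $|X_j-X_i|$ bounded by constants depending only on $\kappa,\psi,R_\mu$ on the whole interval $[0,\tau]$ — and crucially uniformly in $\tau$ if we only care about a single step, but we will need $\tau$ bounded, say $\tau\le 1$, which is the regime of interest. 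One obtains a linear differential inequality of the schematic form
\[
  \frac{d}{dt}\Big(\textstyle\sum_i |\delta_i^X|^2 + |\delta_i^V|^2\Big) \le C\Big(\textstyle\sum_i |\delta_i^X|^2 + |\delta_i^V|^2\Big),
\]
with $C=C(\kappa,\psi,R_\mu)$, so Grönwall on $[0,\tau]$ yields $\sum_i(|\delta_i^X(\tau)|^2+|\delta_i^V(\tau)|^2)\le e^{C\tau}\sum_i(|\delta_i^X(0)|^2+|\delta_i^V(0)|^2)$. Taking expectations, using that the $p$ pairs are i.i.d.\ and exchangeable, $\mathbb{E}|Z_1^1(\tau)-Z_1^2(\tau)|^2 \le \frac1p\,\mathbb{E}\sum_i|Z_i^1(\tau)-Z_i^2(\tau)|^2 \le e^{C\tau}\,\mathbb{E}|z_1^1-z_1^2|^2 = e^{C\tau}W_2^2(\mu_1,\mu_2)$, giving the result for $q=2$; for general $q$ one runs the same Grönwall argument directly on $\sum_i |\delta_i|^q$ (using $\frac{d}{dt}|\delta_i|^q = q|\delta_i|^{q-2}\delta_i\cdot\dot\delta_i$ and absorbing the resulting powers via the uniform bounds, or equivalently bounding the flow map's local Lipschitz constant in $|\cdot|$ and composing), then take expectations and invoke exchangeability as above.

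The main obstacle is the velocity-coupling term: unlike a first-order gradient flow with a contractive kernel, the Cucker–Smale force is not Lipschitz globally in phase space, so one genuinely needs the a priori confinement of supports (Lemma \ref{ineq: coarseD}, plus the momentum-matching hypothesis to control $|V_i|$ rather than just $|V_i-V_j|$) to localize the dynamics and extract a finite Lipschitz constant over the step. A secondary technical point is that $\mathcal{Q}_\infty$ marginalizes after the flow, so one must be careful that the coupling of $\mathcal{Q}_\infty(\mu_1)$ and $\mathcal{Q}_\infty(\mu_2)$ is produced by \emph{first} coupling the full tensorized initial data via $\pi^{\otimes p}$ and \emph{then} running the deterministic flow — the independence/exchangeability of the $p$ samples is what converts $\frac1p\sum_i$ back into a single-particle bound without losing a factor of $p$. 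Neither of these is deep, but both must be stated cleanly; the bound $C$ will depend on $\kappa$, $\|\psi\|_{\mathrm{Lip}}$, $\psi_M=1$, and $R_\mu$ exactly as claimed, and is uniform in $\tau$ over $\tau\in(0,1]$ (equivalently $e^{C\tau}\le e^{C}$), which suffices for the uniform-in-time application in Theorem \ref{thm: 2}.
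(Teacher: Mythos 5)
Your coupling set-up is fine and in fact coincides with the paper's (both arguments couple the tensorized initial data and track $\mathbb{E}|Z_1-\bar Z_1|^q$ along the two synchronously coupled $p$-particle flows, using exchangeability to reduce to particle $1$). The divergence is in the estimate you run afterwards, and this is where there is a genuine gap. A crude Gr\"onwall bound gives a one-step factor $e^{C\tau}>1$; for a single step with $\tau$ bounded this does yield some constant $C(\kappa,\psi,R_\mu)$, but your closing claim that such a bound ``suffices for the uniform-in-time application in Theorem \ref{thm: 2}'' is false, and that application is precisely what the lemma exists for. The paper's proof instead extracts a dissipative structure: Step 1 gives the flocking decay $\mathcal{D}_{\bar V}(t)\le 2R_\mu e^{-\kappa\psi_0 t}$; Step 2 splits $\frac{d}{dt}\mathbb{E}|V_1-\bar V_1|^q$ into $I_{21}+I_{22}$, where the lower bound $\psi\ge\psi_0$ combined with the matched momenta $\int v\,d\mu_1=\int v\,d\mu_2$ (which kills the mean cross term after symmetrization) makes $I_{21}$ a strictly negative multiple of $\mathbb{E}|V_1-\bar V_1|^q$, while $I_{22}$ carries the exponentially decaying coefficient $\mathcal{D}_{\bar V}(t)$. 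This produces the differential inequalities \eqref{eq:qqx}--\eqref{eq:qqv}, and Lemma \ref{lem: SDDI} then yields a constant that is uniform in the length of the time interval, not of the form $e^{Ct}$. Lemma \ref{lem:G stability} re-runs exactly these inequalities on $[0,(n-m)\tau]$ to get a constant independent of $n-m$, and Theorem \ref{thm: 2} sums the resulting errors uniformly in $n$; a per-step factor $e^{C\tau}$ cannot be iterated without incurring $e^{CT}$ growth, so your version of the lemma cannot be plugged into the rest of the argument.

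A second, smaller point: you place the momentum-matching hypothesis in the wrong spot. Your Gr\"onwall argument never actually needs it (velocities of the $p$-particle Cucker--Smale flow remain in $B(0,R_\mu)$ by the support assumption and Lemma \ref{ineq: coarseD}, which already bounds $|V_j-V_i|$ and the $\|\psi\|_{\mathrm{Lip}}$ remainder), whereas in the paper it is exactly what produces the contraction in $I_{21}$. That your proof does not use the hypothesis is a symptom that it misses the structure the lemma is really about. To repair the proposal in the paper's spirit: keep your coupling, replace the naive Lipschitz bound by the index-exchange symmetrization of the interaction term, use $\psi\ge\psi_0$ plus the matched momenta to obtain the negative term, bound the remainder by $\|\psi\|_{\mathrm{Lip}}\,\mathcal{D}_{\bar V}(t)\,(\mathbb{E}|X_1-\bar X_1|^q)^{1/q}(\mathbb{E}|V_1-\bar V_1|^q)^{1-1/q}$ with $\mathcal{D}_{\bar V}(t)\lesssim R_\mu e^{-\kappa\psi_0 t}$ from Step 1, and close with Lemma \ref{lem: SDDI}.
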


\begin{proof}
    Set $(X_i,V_i)(0)\sim \mu_1,$ and $(\Bar{X}_i,\Bar{V}_i)(0)\sim \mu_2,$ for $i=1, \cdots, p.$ The key to the proof is to make use of Lemma \ref{lem: SDDI} to get the contraction property of the velocity difference. 
    The two steps are as follows.
    \paragraph{Step 1.}
    First, we show an auxiliary estimate on $\mathcal{D}_V(t).$ By following the formulation of [\cite{ha2018uniform}, Lemma 2.2], denote
    \begin{equation*}
        \Psi_{ij}:= \frac{\psi(|X_i-X_j|)}{p} + \left( 1- \frac{\sum\limits_{j=1}^p \psi(|X_i-X_j|)}{p}\right)\delta_{ij},
    \end{equation*}
    where $\delta_{ij}$ is the Kronecker delta. Then $\Psi_{ij}$ is nonnegative and satisfies
    \begin{equation*}
        \Psi_{ij} \ge \frac{\psi(|X_i -X_j |)}{p}, \quad
        \sum\limits_{j=1}^p \Psi_{ij} =1,\quad
        \sum\limits_{j=1}^p\Psi_{ij}(V_j-V_i) = \sum\limits_{j=1}^{p}\frac{\psi(|X_i -X_j |)}{p}(V_j-V_i).
    \end{equation*}
    We denote $V_i = (V_i^k)$ and obtain the following equality
    \begin{align}\label{eq:Dv9}
        \frac{1}{2}\frac{d}{dt}&|V_i - V_j|^2
        = \frac{1}{2}\frac{d}{dt}\sum\limits_{k=1}^d|V_i^k - V_j^k|^2\notag\\
        =& \sum\limits_{k=1}^d(V_i^k - V_j^k)(\Dot{V}_i^k - \Dot{V}_j^k)\notag\\
        =& \sum\limits_{k=1}^d \frac{\kappa p}{p-1} (V_i^k - V_j^k)\left[
        \sum\limits_{m=1}^p \Psi_{im}(V_m^k - V_i^k) - \sum\limits_{m=1}^p \Psi_{jm}(V_m^k - V_j^k)\right]\notag\\
        =& -\sum\limits_{k=1}^d \frac{\kappa p}{p-1} (V_i^k - V_j^k)^2
        +  \sum\limits_{k=1}^d \kappa (V_i^k - V_j^k)\left( \sum\limits_{m=1}^p (\Psi_{im}-\Psi_{jm})V_m^k\right)\notag\\
        =& -\sum\limits_{k=1}^d \frac{\kappa p}{p-1} (V_i^k - V_j^k)^2\notag\\
        &+  \sum\limits_{k=1}^d \frac{\kappa p}{p-1} (V_i^k - V_j^k)\left[ \sum\limits_{m=1}^p \left(\Psi_{im}-\min\{\Psi_{im},\Psi_{jm}\} + \min\{\Psi_{im},\Psi_{jm}\} - \Psi_{jm}\right)V_m^k\right].
    \end{align}
    For a given time $t,$ one can choose indices $i_t$ and $j_t$ satisfying the relation 
    \begin{equation*}
        \mathcal{D}_V(t) = |V_{i_t}(t) -V_{j_t}(t)|,
    \end{equation*}
    and thus for each $m=1,2,\cdots,p,$
    \begin{equation}\label{eq:Dv10}
        \sum\limits_{k=1}^d (V_{i_t}^k -V_{j_t}^k)V_{j_t}^k \le  \sum\limits_{k=1}^d (V_{i_t}^k -V_{j_t}^k)V_{m}^k \le  \sum\limits_{k=1}^d (V_{i_t}^k -V_{j_t}^k)V_{i_t}^k.
    \end{equation}
    Combining \eqref{eq:Dv9} and \eqref{eq:Dv10}, one obtains
    \begin{align}\label{eq:Dv11}
        \frac{1}{2}\frac{d}{dt}|V_{i_t} - V_{j_t}|^2       
        \le&  -\sum\limits_{k=1}^d \frac{\kappa p}{p-1} (V_{i_t}^k - V_{j_t}^k)^2\\
        &+  \sum\limits_{k=1}^d \frac{\kappa p}{p-1} (V_{i_t}^k - V_{j_t}^k)V_{i_t}^k \sum\limits_{m=1}^p \left(\Psi_{im}-\min\{\Psi_{im},\Psi_{jm}\}\right) \notag\\
        &-  \sum\limits_{k=1}^d \frac{\kappa p}{p-1} (V_{i_t}^k - V_{j_t}^k)V_{j_t}^k \sum\limits_{m=1}^p \left(\Psi_{jm} -\min\{\Psi_{im},\Psi_{jm}\}\right)\notag\\
        =&-\sum\limits_{m=1}^p\frac{\kappa p}{p-1} \min\{\Psi_{im},\Psi_{jm}\}|V_{i_t} - V_{j_t}|^2\notag\\
        \le & - \frac{\kappa p}{p-1}\psi_0 |V_{i_t} - V_{j_t}|^2.
    \end{align}
    Note that $\mathcal{D}_V$ is Lipschitz continuous with respect to $t$ and thus it is differentiable almost everywhere. Consider the instant $t$ at which $\mathcal{D}_V$ is differentiable. Then,
    \begin{equation}\label{eq:Dv12}
        \frac{d}{dt}\mathcal{D}_V^2 \le \frac{d}{dt}|V_{i_t} - V_{j_t}|^2.
    \end{equation}
    Combining with \eqref{eq:Dv11} and \eqref{eq:Dv12}, one obtains
    \begin{equation*}
        \left|\frac{d}{dt}\mathcal{D}_X\right| \le \mathcal{D}_V, \quad 
        \frac{d}{dt}\mathcal{D}_V \le -\frac{\kappa p}{p-1}\psi_0 \mathcal{D}_V,
    \end{equation*}
    which is an SDDI. By Lemma \ref{lem: SDDI}, there exists a positive constant $R_\mu$ such that 
    \begin{equation}\label{eq:Dv}
        \mathcal{D}_V(t)\le 2e^{-\kappa\psi_0 t}R_\mu.
    \end{equation}

    \paragraph{Step 2.}
    
    It can be obtained that 
    \begin{equation}\label{eq:3.29}
        \frac{d}{dt} \mathbb{E} | V_1 - \Bar{V}_1 |^q =  
           \frac{1}{p}\frac{d}{dt}\mathbb{E}\sum\limits_{i=1}^p \left(\sum\limits_{k=1}^d | V_i^k - \Bar{V}_i^k |^2\right)^{\frac{q}{2}},
    \end{equation}
    with
    \begin{equation*}
    \begin{aligned}
        \frac{d}{dt}\mathbb{E} &\left(\sum\limits_{k=1}^d | V_i^k - \Bar{V}_i^k |^2\right)^{\frac{q}{2}} 
        =  \mathbb{E}\left[ \frac{q}{2} \left(\sum\limits_{k=1}^d | V_i^k - \Bar{V}_i^k |^2\right)^{\frac{q}{2}-1} \frac{d}{dt} \sum\limits_{k=1}^d 
|V_i^k - \Bar{V}_i^k|^2 \right]\\
        = &\frac{q\kappa }{p-1} \mathbb{E} \left[ |V_i - \Bar{V}_i|^{q-2} \sum\limits_{j=1}^p \psi(|X_j - X_i|)\sum\limits_{k=1}^d(V_j^k - V_i^k - \Bar{V}_j^k + \Bar{V}_i^k)( V_i^k - \Bar{V}_i^k)\right. \\
        &\left.  + \sum\limits_{j=1}^p 
        |V_i - \Bar{V}_i|^{q-2}\left(\psi(|X_j - X_i|) - \psi(|\Bar{X}_j - \Bar{X}_i|)\right)
       \sum\limits_{k=1}^d(\Bar{V}_j^k - \Bar{V}_i^k)( V_i^k - \Bar{V}_i^k) 
       \right].
    \end{aligned}
    \end{equation*}
    Rewrite the right-hand side of $\eqref{eq:3.29}$ as
   \begin{equation*}
      \frac{d}{dt} \mathbb{E} | V_1 - \Bar{V}_1 |^q
       := I_{21} + I_{22},
   \end{equation*}
   where
   \begin{equation*}
       I_{21}:= \frac{q\kappa}{p-1}\mathbb{E}  \sum\limits_{i,j=1}^p \sum\limits_{k=1}^d  \psi(|X_j - X_i|) |V_i - \Bar{V}_i|^{q-2} (V_j^k - V_i^k - \Bar{V}_j^k + \Bar{V}_i^k)(V_i^k - \Bar{V}_i^k),
   \end{equation*}
   and 
   \begin{multline*}
       I_{22}:= \frac{q\kappa}{p-1}\mathbb{E}  \sum\limits_{i,j=1}^p \sum\limits_{k=1}^d
        \left(\psi(|X_j - X_i|) - \psi(|\Bar{X}_j - \Bar{X}_i|)\right)\\
        \cdot|V_i - \Bar{V}_i|^{q-2}
       ( V_i^k - \Bar{V}_i^k )(\Bar{V}_j^k - \Bar{V}_i^k). 
   \end{multline*}
   First estimate $I_{21}$ using the index changing trick:
   \begin{equation*}
       \begin{aligned}
           I_{21} = &\frac{q\kappa}{p-1}\mathbb{E}  \sum\limits_{i,j=1}^p \sum\limits_{k=1}^d  \psi(|X_j - X_i|) |V_i - \Bar{V}_i|^{q-2} [(V_j^k -\Bar{V}_j^k)-( V_i^k - \Bar{V}_i^k )](V_i^k - \Bar{V}_i^k)\\
           = & \frac{q\kappa}{p-1}\mathbb{E} \left[ \sum\limits_{i,j=1}^p \sum\limits_{k=1}^d  \psi(|X_j - X_i|) |V_i - \Bar{V}_i|^{q-2} (V_j^k -\Bar{V}_j^k)(V_i^k - \Bar{V}_i^k)\right.\\
           & \left.- \sum\limits_{i,j=1}^p \sum\limits_{k=1}^d  \psi(|X_j - X_i|) |V_i - \Bar{V}_i|^{q-2}(V_i^k - \Bar{V}_i^k)^2\right]\\
           = & \frac{q\kappa}{p-1}\mathbb{E} \left[ \sum\limits_{i,j=1}^p \sum\limits_{k=1}^d  \psi(|X_j - X_i|) |V_j - \Bar{V}_j|^{q-2} (V_j^k -\Bar{V}_j^k)(V_i^k - \Bar{V}_i^k)\right.\\
           & \left.- \sum\limits_{i,j=1}^p \sum\limits_{k=1}^d  \psi(|X_j - X_i|) |V_i - \Bar{V}_i|^{q-2}(V_i^k - \Bar{V}_i^k)^2\right]\\
           = & -\frac{q\kappa}{p-1}\mathbb{E} \left[ \sum\limits_{i,j=1}^p \sum\limits_{k=1}^d  \psi(|X_j - X_i|)\right. \\
           &\cdot \left[|V_i - \Bar{V}_i|^{q-2} (V_i^k -\Bar{V}_i^k) - |V_j - \Bar{V}_j|^{q-2} (V_j^k -\Bar{V}_j^k)\right]
           \Biggl.(V_i^k - \Bar{V}_i^k)\Biggr]\\
           = & \frac{q\kappa}{2(p-1)}\mathbb{E} \left[ \sum\limits_{i,j=1}^p \sum\limits_{k=1}^d  \psi(|X_j - X_i|) \right.\\
           & \Biggl.\cdot \left[|V_i - \Bar{V}_i|^{q-2} (V_i^k -\Bar{V}_i^k) - |V_j - \Bar{V}_j|^{q-2} (V_j^k -\Bar{V}_j^k)\right][(V_j^k - \Bar{V}_j^k)-(V_i^k - \Bar{V}_i^k)]\Biggr].
       \end{aligned}
   \end{equation*}
    One can use the relation 
    \begin{equation*}
        \sum\limits_{k=1}^d\left[|V_i - \Bar{V}_i|^{q-2} (V_i^k -\Bar{V}_i^k) - |V_j - \Bar{V}_j|^{q-2} (V_j^k -\Bar{V}_j^k)\right]
        \cdot \left.[(V_j^k - \Bar{V}_j^k)-(V_i^k - \Bar{V}_i^k)]\right] \le 0,
    \end{equation*}
    and the condition \eqref{ass:psi} of $\psi$ to obtain
    \begin{equation*}
        \begin{aligned}
        I_{21} \le & \frac{q\kappa}{2(p-1)}\psi_0 \mathbb{E}\Bigg\{\sum\limits_{i,j=1}^p \sum\limits_{k=1}^d 
        \left[|V_i - \Bar{V}_i|^{q-2} (V_i^k -\Bar{V}_i^k) - |V_j - \Bar{V}_j|^{q-2} (V_j^k -\Bar{V}_j^k)\right]\\
        &\cdot [(V_j^k - \Bar{V}_j^k)-(V_i^k - \Bar{V}_i^k)]\Bigg\}\\
        = &\frac{q\kappa}{2(p-1)}\psi_0 \mathbb{E}\left[\sum\limits_{i=1}^p \sum\limits_{k=1}^d 
        |V_i - \Bar{V}_i|^{q-2} (V_i^k -\Bar{V}_i^k)\sum\limits_{j=1}^p (V_j^k -\Bar{V}_j^k)\right.\\
        &-  \sum\limits_{i=1}^p \sum\limits_{k=1}^d 
         p|V_i - \Bar{V}_i|^{q-2} (V_i^k -\Bar{V}_i^k)^2
        -  \sum\limits_{j=1}^p \sum\limits_{k=1}^d 
         p|V_j - \Bar{V}_j|^{q-2} (V_j^k -\Bar{V}_j^k)^2\\
        &+  \left.\sum\limits_{j=1}^p \sum\limits_{k=1}^d 
         |V_j - \Bar{V}_j|^{q-2} (V_j^k -\Bar{V}_j^k)\sum\limits_{i=1}^p (V_i^k -\Bar{V}_i^k)\right].
        \end{aligned}
    \end{equation*}
    By the momentum condition $\mathbb{E}\sum\limits_{i=1}^p (V_i^k - \Bar{V}_i^k) = 0,$ it holds
    \begin{equation}\label{eq:3.38}
        I_{21} \le  -\frac{p^2q\kappa}{p-1}\psi_0 \mathbb{E}|V_1 - \Bar{V}_1|^q.
    \end{equation}

   For $I_{22},$ one has
    \begin{equation*}
        \begin{aligned}
            I_{22} \le & \frac{q\kappa}{p-1}\|\psi\|_\text{{Lip}} \mathbb{E} \left[
            \sum\limits_{i,j=1}^p \sum\limits_{k=1}^d \left| |X_j - X_i| - 
            |\Bar{X}_j - \Bar{X}_i | \right| V_i - \Bar{V}_i|^{q-2}|V_i^k - \Bar{V}_i^k| |\Bar{V}_j^k - \Bar{V}_i^k|
            \right]\\
            \le & \frac{q\kappa}{p-1}\|\psi\|_\text{{Lip}} \mathbb{E} \left[
            \sum\limits_{i,j=1}^p\left| |X_j - X_i| - 
            |\Bar{X}_j - \Bar{X}_i | \right| |V_i - \Bar{V}_i|^{q-1} |\Bar{V}_j - \Bar{V}_i|
            \right],
        \end{aligned}
    \end{equation*}
    by the Cauchy inequality. Then by the inequality $\left| |a-b| - |c-d| \right| \le |a-c| + |b-d|,$ one has
   \begin{equation}\label{eq:C(d)2}
        \begin{aligned}
            I_{22} \le  & \frac{q\kappa}{p-1}\|\psi\|_\text{{Lip}} \mathbb{E} \left[
            \sum\limits_{i,j=1}^p\left| |X_i - \Bar{X}_i| + 
            |X_j - \Bar{X}_j | \right| |V_i - \Bar{V}_i|^{q-1} |\Bar{V}_j - \Bar{V}_i|
            \right]\\
           \le  & \frac{q\kappa}{p-1}\|\psi\|_\text{{Lip}}  \mathcal{D}_{\Bar{V}}(t) \mathbb{E} \left[p
            \sum\limits_{i=1}^p |X_i - \Bar{X}_i| | V_i - \Bar{V}_i |^{q-1} + 
            \sum\limits_{i,j=1}^p |X_j - \Bar{X}_j| | V_i - \Bar{V}_i |^{q-1}\right]\\
            \le & \frac{2q p^2\kappa}{p-1}\|\psi\|_\text{{Lip}}   
            \mathcal{D}_{\Bar{V}}(t) \mathbb{E} \left[
             |X_1 - \Bar{X}_1| | V_1 - \Bar{V}_1 |^{q-1} \right],
        \end{aligned}
    \end{equation}
    by H\"older's inequality. Now by combining \eqref{eq:3.29} and \eqref{eq:3.38} and \eqref{eq:C(d)2}, one has
    \begin{equation*}
        \begin{aligned}
            \frac{d}{dt}\mathbb{E}| V_1 - \Bar{V}_1 |^q \le & -\frac{p}{p-1}q\kappa\psi_0 
            \mathbb{E}|V_1 - \Bar{V}_1|^q \\
            &+ \frac{2p}{p-1}q\kappa \|\psi\|_\text{{Lip}}\mathcal{D}_{\Bar{V}}(t)
            \mathbb{E}[|X_1-\Bar{X}_1| |V_1 - \Bar{V}_1|^{q-1}]\\
            \le & -\frac{p}{p-1}q\kappa\psi_0 
            \mathbb{E}|V_1 - \Bar{V}_1|^q \\
            &+ \frac{2p}{p-1}q\kappa \|\psi\|_\text{{Lip}}\mathcal{D}_{\Bar{V}}(t)
            (\mathbb{E}[|X_1-\Bar{X}_1|^q])^{\frac{1}{q}} (\mathbb{E}|V_1 - \Bar{V}_1|^q)^{1-\frac{1}{q}},
        \end{aligned}
    \end{equation*}
    similarly by H\"older's inequality. Since
    \begin{equation}\label{eq:qqv}
        \begin{aligned}
            \frac{d}{dt} (\mathbb{E}&|V_1 - \Bar{V}_1|^q)^{\frac{1}{q}} = 
            \frac{1}{q}(\mathbb{E}|V_1 - \Bar{V}_1|^q)^{\frac{1}{q}-1} \frac{d}{dt}\mathbb{E}|V_1 - \Bar{V}_1|^q\\
            \le & -\frac{p\kappa}{p-1}\psi_0 
            (\mathbb{E}[|V_1 - \Bar{V}_1|^q_q])^{\frac{1}{q}} 
            + \frac{2p\kappa}{p-1} \|\psi\|_\text{{Lip}}\mathcal{D}_{\Bar{V}}(t)
            (\mathbb{E}[|X_1-\Bar{X}_1|^q])^{\frac{1}{q}}.
        \end{aligned}
    \end{equation}
    and 
    \begin{equation*}
        \begin{aligned}
            \frac{d}{dt} \mathbb{E}|X_1 - \Bar{X}_1|^q = &
            \mathbb{E}\frac{q}{2}\left(\sum\limits_{k=1}^d |X_1^k - \Bar{X}_1^k|^2\right)^{\frac{q}{2}-1} \sum\limits_{k=1}^d(X_1^k - \Bar{X}_1^k)\cdot 2(V_1^k - \Bar{V}_1^k)\\
            \le & \mathbb{E} q |X_1 - X_1|^{q-1} |V_1 - \Bar{V}_1 |\\
            \le & q (\mathbb{E}|X_1 - \Bar{X}_1|^q)^{1-\frac{1}{q}} (\mathbb{E}|V_1 - \Bar{V}_1|^q)^{\frac{1}{q}},
        \end{aligned}
    \end{equation*}
    thus it holds 
    \begin{equation}\label{eq:qqx}
        \frac{d}{dt}(\mathbb{E}|X_1 - \Bar{X}_1|^q)^{\frac{1}{q}} = 
        \frac{1}{q}(\mathbb{E}|X_1 - \Bar{X}_1|^q)^{\frac{1}{q}-1}
        \frac{d}{dt} \mathbb{E}|X_1 - \Bar{X}_1|^q
        \le (\mathbb{E}|V_1 - \Bar{V}_1|^q)^{\frac{1}{q}}.
    \end{equation}
    Note that $\mathcal{D}_{\Bar{V}}\le 2 e^{-\kappa\psi_0 t}  R_\mu$ by Step 1. 
    Now one knows that both $(\mathbb{E}|X_1 - \Bar{X}_1|^q)^{\frac{1}{q}}$ and $(\mathbb{E}|V_1 - \Bar{V}_1|^q)^{\frac{1}{q}}$ satisfy Lemma \ref{lem: SDDI} and then there exists a constant $C$ such that
    \begin{equation*}
        (\mathbb{E}|X_1 - \Bar{X}_1|^q)^{\frac{1}{q}}(t)
        + (\mathbb{E}|V_1 - \Bar{V}_1 \|^q)^{\frac{1}{q}}(t)
        \le C \left[(\mathbb{E}|X_1 - \Bar{X}_1 |^q)^{\frac{1}{q}}(0)
        + (\mathbb{E}|V_1 - \Bar{V}_1 |^q)^{\frac{1}{q}}(0)\right].
    \end{equation*}
    Then, 
    \begin{equation*}
        \begin{aligned}
         W_q (\mathcal{Q}_\infty (\mu_1) ,\mathcal{Q}_\infty (\mu_2)) \le & \left(\mathbb{E} |X_1 -\Bar{X}_1|^q (t) + \mathbb{E} |V_1 -\Bar{V}_1|^q(t)\right)^{\frac{1}{q}}\\
         \le & \left(\mathbb{E}|X_1 - \Bar{X}_1|^q (t)\right)^{\frac{1}{q}}
         + \left(\mathbb{E}|V_1 - \Bar{V}_1|^q (t)\right)^{\frac{1}{q}}\\
         \le & C \left[(\mathbb{E}|X_1 - \Bar{X}_1|^q)^{\frac{1}{q}}(0)
        + (\mathbb{E}|V_1 - \Bar{V}_1|^q)^{\frac{1}{q}}(0)\right]\\
         \le &2 C \left[\mathbb{E} |X_1 -\Bar{X}_1|^q (0) + \mathbb{E} |V_1 -\Bar{V}_1|^q(0)\right]^{\frac{1}{q}},
    \end{aligned} 
    \end{equation*}
    since $a^{\frac{1}{q}}+ b^{\frac{1}{q}}\le 2^{1-\frac{1}{q}}(a+b)^\frac{1}{q}$ for $a\ge0,$ $b\ge 0$ with $q\ge 1.$ For any $\epsilon>0,$ the initial data can be chosen by a coupling such that 
    \begin{equation*}
        \left[\mathbb{E} |X_1 -\Bar{X}_1|^q (0) + \mathbb{E} |V_1 -\Bar{V}_1|^q(0)\right]^{\frac{1}{q}} \le \epsilon + W_1(\mu_1,\mu_2).
    \end{equation*}
    Hence by the arbitrariness of $\epsilon,$ it holds $
    W_q (\mathcal{Q}_\infty (\mu_1) ,\mathcal{Q}_\infty (\mu_2)) \le C W_q(\mu_1,\mu_2),$
\end{proof}

    \begin{lemma}\label{lem:G stability}
    For $\Tilde{f}$ defined in $\eqref{eq: mfl for cs}$ with $R<\infty$, one has
    \begin{equation*}
        W_q(\mathcal{Q}_{\infty}^{n-m+1}(\Tilde{f}_{m-1}),\mathcal{Q}_{\infty}^{n-m}(\Tilde{f}_{m}))
        \le C W_q(\mathcal{Q}_\infty(\Tilde{f}_{m-1}), \Tilde{f}_m), \quad q=[1,\infty),
    \end{equation*}
    where $C$ is relevant to $\psi,\kappa,R $ and $ d.$
    \end{lemma}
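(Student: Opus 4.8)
The plan is to reduce the statement to a stability bound for $\mathcal{Q}_\infty$ that is \emph{uniform in the number of iterations}, and to establish that bound through one coupling of the two iterated dynamics together with the dissipative differential inequalities of Lemma~\ref{lem: SDDI}. Since $\mathcal{Q}_\infty^{n-m+1}(\Tilde f_{m-1})=\mathcal{Q}_\infty^{n-m}\!\big(\mathcal{Q}_\infty(\Tilde f_{m-1})\big)$, putting $\mu_1:=\mathcal{Q}_\infty(\Tilde f_{m-1})$, $\mu_2:=\Tilde f_m$ and $k:=n-m$, the claim is exactly $W_q(\mathcal{Q}_\infty^{k}(\mu_1),\mathcal{Q}_\infty^{k}(\mu_2))\le C\,W_q(\mu_1,\mu_2)$ with $C=C(\kappa,\psi,R,d)$ independent of $k$. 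First I would record the three facts that propagate along the iteration: (i) $\int v\,d\mu_1=\int v\,d\mu_2=\Bar v=0$, since momentum is conserved by the mean-field flow \eqref{eq:tildevf} and by one step of $\mathcal{Q}_\infty$ (Proposition~\ref{prop: M1v}, whose computation applies to \eqref{the RBM Liouville} started from any symmetric tensor product); (ii) the velocity supports of all $\mathcal{Q}_\infty^{j}(\mu_i)$ lie in a fixed ball $B(0,R_\mu)$ with $R_\mu\lesssim R$ (Lemma~\ref{lem: supp} for $\Tilde f$, whose hypothesis \eqref{eq: C_v} is automatic because $\psi\ge\psi_0>0$, and Lemma~\ref{ineq: coarseD} for the one-step velocity diameter); and (iii), crucially, the velocity variance contracts geometrically, $M_2^v(\mathcal{Q}_\infty^{j}(\mu_i))\le M_2^v(\mu_i)\big(e^{-2\kappa\psi_0\tau}+\tfrac{8\kappa^2\psi_0}{p-1}\tau^2\big)^{j}$, i.e.\ Proposition~\ref{prop: M2v} applied with $\mu_i$ in place of $f_0$.

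Iterating the one-step estimate of Lemma~\ref{lem: lmmG} $k$ times only yields the useless factor $C^{k}$: its per-step constant exceeds $1$ in general, and the velocity \emph{support} does not shrink, so flocking does not help the product. The uniform constant has to come from the \emph{time-decay} of the velocity spread, which is visible only when the whole interval $[t_m,t_n]$ is handled at once. So I would couple the two iterated dynamics as follows: run both $p$-particle RBM mean-field flows \eqref{eq: the RBM limit discrete} synchronously; carry the pair $(Z_1,\Bar Z_1)$ of particle $1$ unchanged through every resampling time $t_{m+j}$; and at each $t_{m+j}$ redraw the other $p-1$ particles of \emph{both} systems jointly as $p-1$ i.i.d.\ copies of the current joint law of $(Z_1,\Bar Z_1)$, independent of particle $1$. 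By induction this is a legitimate coupling of $\mathcal{Q}_\infty^{k}(\mu_1)$ and $\mathcal{Q}_\infty^{k}(\mu_2)$ (the $Z$- and $\Bar Z$-marginals each reproduce the corresponding $\mathcal{Q}_\infty$-dynamics), and on every subinterval the $p$ coupled pairs are i.i.d.\ and hence exchangeable; in particular the common value of $(\mathbb{E}|Z_\ell(t)-\Bar Z_\ell(t)|^q)^{1/q}$ equals the error of particle $1$, so the resampled batchmates contribute no additional error term to bootstrap.

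Set $\mathcal{X}(t):=(\mathbb{E}|X_1(t)-\Bar X_1(t)|^q)^{1/q}$ and $\mathcal{V}(t):=(\mathbb{E}|V_1(t)-\Bar V_1(t)|^q)^{1/q}$ on $[t_m,t_n]$. From $\dot X_1=V_1$ one gets $|\mathcal{X}'|\le\mathcal{V}$, and for $\mathcal{V}$ I would rerun the computation of Step~2 of the proof of Lemma~\ref{lem: lmmG} for this particle-$1$ marginal: the convex-combination part, via the momentum cancellation $\mathbb{E}\sum_\ell(V_\ell-\Bar V_\ell)=0$ and $\psi\ge\psi_0$, produces a clean contraction $\tfrac{d}{dt}\mathbb{E}|V_1-\Bar V_1|^q\le -c_1\mathbb{E}|V_1-\Bar V_1|^q+(\text{Lipschitz term})$ with $c_1\simeq\kappa\psi_0$. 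For the Lipschitz term, instead of the non-decaying velocity-diameter bound used in Lemma~\ref{lem: lmmG}, I would use that the system-$2$ velocities obey $\mathbb{E}|\Bar V_\ell(t)|^2\le M_2^v(\mathcal{Q}_\infty^{j}(\mu_2))\le CR^2e^{-c(t-t_m)}$ by fact~(iii) (and the one-step monotonicity of $M_2^v$), so that $\|\Bar V_\ell(t)-\Bar V_{\ell'}(t)\|_{L^s}\lesssim e^{-\alpha(t-t_m)}$ after interpolating $L^s$ between $L^2$ and $L^\infty$ (the latter bounded by $2R_\mu$); combined with H\"older and, after a bounded initial layer, this turns the inequality for $(\mathcal{X},\mathcal{V})$ into the dissipative form \eqref{eq:sddi}, with $\alpha,\gamma$ positive and depending only on $\kappa,\psi_0,\|\psi\|_{\mathrm{Lip}},R_\mu,d,q$. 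Lemma~\ref{lem: SDDI} then gives $\mathcal{X}(t)+\mathcal{V}(t)\le C(\mathcal{X}(t_m)+\mathcal{V}(t_m))$ for all $t\ge t_m$ with $C$ independent of $k$; taking the $t_m$-coupling optimal for $W_q$ makes $\mathcal{X}(t_m)+\mathcal{V}(t_m)\le 2^{1-1/q}W_q(\mu_1,\mu_2)$ up to an arbitrary $\epsilon$, and evaluating at $t=t_n$ gives $W_q(\mathcal{Q}_\infty^{k}(\mu_1),\mathcal{Q}_\infty^{k}(\mu_2))\le\mathcal{X}(t_n)+\mathcal{V}(t_n)\le C\,W_q(\mu_1,\mu_2)$.

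I expect the Lipschitz term to be the main obstacle. Lemma~\ref{lem: lmmG} dismisses it with the velocity-diameter factor, which is fine over one step but does not decay along the iteration because the velocity support radius of $\mathcal{Q}_\infty^{j}(\mu_i)$ stays bounded away from $0$; to recover a $k$-independent constant one must extract an \emph{averaged} (hence genuinely decaying) dependence on the resampled velocities through Proposition~\ref{prop: M2v}, while still retaining enough of the factor $|V_1-\Bar V_1|^{q-1}$ for the estimate to close as a differential inequality for $\mathcal{V}$. Tuning the $L^q$–$L^2$–$L^\infty$ interpolation so the power of $\mathcal{V}$ stays admissible, and checking that the bookkeeping genuinely produces the $e^{-\alpha(t-t_m)}$ coefficient required by \eqref{eq:sddi} rather than a constant one, is the technical heart of the argument.
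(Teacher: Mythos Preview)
Your plan shares the same skeleton as the paper's proof—construct one coupling of the two iterated $\mathcal{Q}_\infty$-dynamics, rerun the Step~2 computation of Lemma~\ref{lem: lmmG} to obtain \eqref{eq:qqx}--\eqref{eq:qqv} on the whole interval $[0,(n-m)\tau]$, and conclude by Lemma~\ref{lem: SDDI}. The paper's argument is extremely terse: it records that the iterated laws keep compact velocity support inside $\mathrm{supp}^v[\tilde f_0]$ and zero mean, asserts that \eqref{eq:qqx}--\eqref{eq:qqv} ``still hold for $t\in[0,(n-m)\tau]$'', and then invokes the SDDI lemma. It does \emph{not} spell out the resampling coupling you describe, nor does it explain why the coefficient $\mathcal{D}_{\bar V}(t)$ in \eqref{eq:qqv} carries the $e^{-\alpha t}$ decay that Lemma~\ref{lem: SDDI} needs. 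You are right that the one-step Step~1 bound $\mathcal{D}_{\bar V}(t)\le 2R_\mu e^{-\kappa\psi_0 t}$ does not automatically globalize: at every resampling time the batchmates are redrawn from $\mathcal{Q}_\infty^{j}(\tilde f_m)$, whose velocity \emph{support} radius is merely nonincreasing (not exponentially shrinking), so the a.s.\ diameter bound resets to $O(R_\mu)$ at each $t_j^+$. This is precisely the point the paper elides, and your proposal confronts it head-on.

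Your remedy—replace the $L^\infty$ diameter $\mathcal{D}_{\bar V}$ by a moment that does decay, via Proposition~\ref{prop: M2v}—is the natural idea, and it is genuinely additional to what the paper writes. The caveat you flag is real: if you keep $|\bar V_j-\bar V_i|$ inside the expectation in $I_{22}$ and apply H\"older with the goal of recovering $\mathcal X=\|\Delta X_1\|_{L^q}$ and $\mathcal V^{q-1}=\|\Delta V_1\|_{L^q}^{q-1}$ exactly, there is no room left in the exponent budget for a finite $L^c$ norm of $\bar V$. One viable way to close is to sacrifice a little of the $|\Delta V_1|^{q-1}$ factor using the uniform $L^\infty$ bound $|\Delta V_1|\le 2R_\mu$ (supports are bounded), i.e.\ write $|\Delta V_1|^{q-1}\le(2R_\mu)^{\varepsilon}|\Delta V_1|^{q-1-\varepsilon}$ and then take $c=q/\varepsilon$ in H\"older; by interpolation $\|\bar V\|_{L^{q/\varepsilon}}\le \|\bar V\|_{L^2}^{2\varepsilon/q}\|\bar V\|_{L^\infty}^{1-2\varepsilon/q}\lesssim e^{-c\varepsilon t/q}$. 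This produces $\tfrac{d}{dt}\mathcal V^q\le -q\alpha\,\mathcal V^q + C e^{-\beta t}\,\mathcal X\,\mathcal V^{\,q-1-\varepsilon}$, which is no longer of SDDI form in $(\mathcal X,\mathcal V)$; you would then need a modified comparison argument (or to work instead with $\mathcal V_{q'}$ for a slightly larger $q'$ and pass back to $q$ using the $L^\infty$ support bound). An alternative is to exploit the independence built into your coupling at the resampling instants: right after $t_j$, the batchmates $\bar V_\ell$, $\ell\ge2$, are independent of particle~$1$, so $\mathbb E[\,|\Delta X_1|\,|\Delta V_1|^{q-1}|\bar V_\ell|\,]$ factorizes, and the only obstruction is the self-term $|\bar V_1|$ and the drift away from independence during $(t_j,t_{j+1})$, which is $O(\tau)$. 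Either route requires work beyond what the paper writes; your identification of this as ``the technical heart'' is accurate.
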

    \begin{proof}
        By the flocking property shown in Step 1 in Lemma \ref{lem: lmmG}, one has $\text{supp}^v[\mathcal{Q}_{\infty}(\Tilde{f}_{m-1})]\subset \text{supp}^v[\Tilde{f}_{m-1}].$ Also, $\text{supp}^v[\Tilde{f}_{m-1}]\subset \text{supp}^v[\Tilde{f}_0]$ by Lemma \ref{lem: supp}. Furthermore, for $n-m-1\ge k\ge 0,$ $\mathcal{Q}^{k+1}_\infty (\Tilde{f}_{m-1})$ and $\mathcal{Q}^k_\infty (\Tilde{f}_m)$ have compact supports, where $\mathcal{Q}^0_\infty (\Tilde{f}_m):=\Tilde{f}_m.$
        
        Similarly with the proof of Proposition \ref{prop: M1v}, note that $(X_1(k\tau),V_1(k\tau)$ and $(\Bar{X}_1(k\tau), \Bar{V}_1(k\tau))$ obey $\mathcal{Q}^{k+1}_\infty (\Tilde{f}_{m-1})$ and $\mathcal{Q}^k_\infty (\Tilde{f}_m)$ respectively, which satisfy the momentum equality assumption of Lemma \ref{lem: lmmG}.

        Denote 
        \[
        |\Delta X_1|:=(\mathbb{E}|X_1 - \Bar{X}_1|^q)^{\frac{1}{q}},\quad |\Delta V_1|:=(\mathbb{E}|V_1 - \Bar{V}_1|^q)^{\frac{1}{q}}, \quad
        |\Delta Z_1|:=|\Delta X_1|+|\Delta V_1|,
        \]
        for $q\ge 1$.
        Hence, by the analysis in Lemma \ref{lem: lmmG}, \eqref{eq:qqx} and \eqref{eq:qqv} still hold for $t\in [0,(n-m)\tau].$  
        By Lemma \ref{lem:G stability}, one obtains
        \begin{equation*}
            |\Delta Z_1 ((n-m)\tau)| \le C |\Delta Z_1 (0)|.
        \end{equation*}
        By choosing a suitable coupling of the initial data, the proof is finished.
    \end{proof}

    \subsection{An auxiliary system}\label{subsec: auxiliary sys}
    
    We introduce an auxiliary system \eqref{eq: tempsys} with law $\Tilde{S}(\Tilde{f}_{t_k})$ to get the estimate on $$W_q (\mathcal{Q}_\infty (\Tilde{f}_{t_k}), \Tilde{f}_{t_{k+1}}) \le W_q(\Tilde{S}(\Tilde{f}_{t_k}),\Tilde{f}_{t_{k+1}}) + W_q(\Tilde{S}(\Tilde{f}_{t_k}),\mathcal{Q}_\infty (\Tilde{f}_{t_{k}}) ).$$ 

    Rewrite \eqref{eq: mfl for cs} into the following particle system
    \begin{equation} \label{eq: mflsys}
        \left\{\begin{aligned}
            \frac{d}{dt} \Tilde{X}_i(t) =& \Tilde{V}_i(t),\, i=1,\cdots, N,\\
            \frac{d}{dt} \Tilde{V}_i(t) =& \int \kappa \psi(|x-\Tilde{X}_i|)(v-\Tilde{V}_i) \Tilde{f}_t(x,v)dxdv.
        \end{aligned}\right.
    \end{equation}

    In order to show the law of $(\Tilde{X},\Tilde{V})$ at time $n\tau$ is close to $\mathcal{Q}_\infty^n,$ we consider the transient system
    \begin{equation} \label{eq: tempsys}
        \left\{\begin{aligned}
            \frac{d}{dt} \hat{X}_i(t) =& \hat{V}_i(t),\, i=1,\cdots, N,\\
            \frac{d}{dt} \hat{V}_i(t) =& \int \kappa\psi(|x-\hat{X}_i|)(v-\hat{V}_i) \Tilde{f}_{t_k}(x,v)dxdv,
        \end{aligned}\right.
    \end{equation}
    with $(\hat{X}(0),\hat{V}(0))\sim \Tilde{f}_{t_k}.$
    Denote $\Tilde{S}(\Tilde{f}_{t_k}):=\text{Law}(\hat{X}(\tau),\hat{V}(\tau)).$

    \begin{lemma}\label{lem: suppS}
    Under the same assumption of Lemma \ref{lem:G stability}, it holds
        \begin{equation}\label{eq:suppS}
            \text{supp}[\Tilde{S}(\Tilde{f}_{t_k})]\subset
            B(0,C)\times B(0,C e^{-\kappa\psi_0 t_{k+1}}).
        \end{equation}
    \end{lemma}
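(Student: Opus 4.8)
The plan is to follow each trajectory of the transient system \eqref{eq: tempsys} separately. The crucial structural point is that the driving density $\Tilde f_{t_k}$ is \emph{frozen} in \eqref{eq: tempsys}, so the equation for $(\hat X_i,\hat V_i)$ is just a non-autonomous ODE driven by a field whose support is already controlled by Lemma \ref{lem: supp}; in particular the $\hat{}$-particles do not interact with one another, so no diameter argument is needed here.

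First I would record the consequences of Lemma \ref{lem: supp}. Since $\bar v=0$ and $\psi(2C_x)\ge\psi_0$ by \eqref{ass:psi}, Lemma \ref{lem: supp} gives
\begin{equation*}
    \text{supp}[\Tilde f_{t_k}]\subset B(0,C_x)\times B\big(0,C_v e^{-\kappa\psi_0 t_k}\big),
\end{equation*}
with $C_x,C_v$ depending only on $\kappa,\psi,R,d$. Hence $\int|v|\,\Tilde f_{t_k}\,dx\,dv\le C_v e^{-\kappa\psi_0 t_k}$, and for any $(\hat X_i(0),\hat V_i(0))\in\text{supp}[\Tilde f_{t_k}]$ one has $|\hat X_i(0)|\le C_x$ and $|\hat V_i(0)|\le C_v e^{-\kappa\psi_0 t_k}$.

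Next I would bound the velocity. Writing the drift in the $\hat V_i$-equation as $b_i-\Phi_i\hat V_i$ with $\Phi_i:=\kappa\int\psi(|x-\hat X_i|)\Tilde f_{t_k}\,dx\,dv\ge\kappa\psi_0$ and, using $\psi\le\psi_M=1$, $|b_i|=\big|\kappa\int\psi(|x-\hat X_i|)v\,\Tilde f_{t_k}\,dx\,dv\big|\le\kappa C_v e^{-\kappa\psi_0 t_k}$, differentiation of $\tfrac12|\hat V_i|^2$ yields, for a.e.\ $t$,
\begin{equation*}
    \frac{d}{dt}|\hat V_i|\le -\kappa\psi_0|\hat V_i|+\kappa C_v e^{-\kappa\psi_0 t_k}.
\end{equation*}
Grönwall's inequality on $[0,\tau]$, together with $|\hat V_i(0)|\le C_v e^{-\kappa\psi_0 t_k}$ and $1-e^{-x}\le x$, then gives $|\hat V_i(t)|\le C_v(1+\kappa t)e^{-\kappa\psi_0 t_k}$ for $t\in[0,\tau]$, hence $|\hat V_i(\tau)|\le C\,e^{-\kappa\psi_0 t_{k+1}}$ after absorbing the (bounded, since $\tau$ is bounded) factor $(1+\kappa\tau)e^{\kappa\psi_0\tau}$ into $C$. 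For the position I would use $\big|\tfrac{d}{dt}|\hat X_i|\big|\le|\hat V_i|$ and integrate, obtaining $|\hat X_i(\tau)|\le C_x+\int_0^\tau|\hat V_i(t)|\,dt\le C_x+\tau C_v(1+\kappa\tau)=:C$.

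Finally, since these two bounds hold for every starting point in $\text{supp}[\Tilde f_{t_k}]$ and the flow map of \eqref{eq: tempsys} is continuous, the push-forward law $\Tilde S(\Tilde f_{t_k})=\text{Law}(\hat X(\tau),\hat V(\tau))$ is supported in $B(0,C)\times B(0,Ce^{-\kappa\psi_0 t_{k+1}})$, which is \eqref{eq:suppS}. I expect the only point needing a little care to be that the momentum drift $b_i$ does \emph{not} vanish here (in contrast with Proposition \ref{prop: M1v}), because $\psi(|x-\hat X_i|)$ reweights the velocities; this is harmless, though, since the crude bound $|b_i|\le\kappa C_v e^{-\kappa\psi_0 t_k}$ already carries the exponential decay in $t_k$ needed to close the velocity estimate. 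No deeper obstacle is anticipated.
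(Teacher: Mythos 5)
Your argument is correct, but it takes a different route from the paper. The paper's proof works with the extremal quantities $\mathsf{X}_k(t),\mathsf{V}_k(t)$ (the maxima of $|x(t,x^k,v^k)|$, $|v(t,x^k,v^k)|$ over the compact support of $\Tilde{f}_{t_k}$), differentiates them via Danskin's theorem, and at the maximizing point uses the sign of $v(t)\cdot\bigl(v-v(t)\bigr)$ together with $\psi\ge\psi_0$ and the exact cancellation $\int v\,\Tilde{f}_{t_k}\,dx\,dv=0$ to obtain the forcing-free inequality $\dot{\mathsf{V}}_k\le-\kappa\psi_0\mathsf{V}_k$; integrating over the step of length $\tau$ then gives $e^{-\kappa\psi_0 t_{k+1}}$ with a constant entirely independent of $\tau$. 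You instead estimate each frozen-field trajectory separately, writing the drift as $b_i-\Phi_i\hat V_i$, bounding $\Phi_i\ge\kappa\psi_0$ and $|b_i|\le\kappa C_v e^{-\kappa\psi_0 t_k}$ crudely (no use of the momentum cancellation in the differential inequality), and closing with Gr\"onwall. This is more elementary — no Danskin, no extremal-point argument — and your computations check out: the Gr\"onwall step indeed yields $|\hat V_i(t)|\le C_v(1+\kappa t)e^{-\kappa\psi_0 t_k}$ and the position bound follows by integration. The price is the factor $(1+\kappa\tau)e^{\kappa\psi_0\tau}$, which you must absorb into $C$ under an a priori bound on $\tau$ (say $\tau\le 1$); this is harmless for the use of the lemma in Theorem \ref{thm: 2}, where $\tau$ is a small time step, but it makes your constant mildly $\tau$-dependent, whereas the paper's cancellation trick preserves the exact decay rate across the step. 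Your closing remark correctly identifies that $b_i$ does not vanish because of the $\psi$-reweighting, and your crude bound is indeed enough since it already carries the $e^{-\kappa\psi_0 t_k}$ decay.
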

    \begin{proof}
           For $\Tilde{S}(\Tilde{f}_{t_k}),$ define
        \begin{align*}
            \mathsf{X}_k (t) :=& \max\{ |x(t,x^k,v^k)|  \mid (x^k,v^k)\in \text{supp}[\Tilde{f}_{t_k}]\},\\
            \mathsf{V}_k (t) :=& \max\{ |v(t,x^k,v^k)|  \mid (x^k,v^k)\in \text{supp}[\Tilde{f}_{t_k}]\},
        \end{align*}
        where the pair ($x(t,x^k,v^k), v(t,x^k,v^k)$) denotes $(x(t),v(t))$ being the solution of \eqref{eq: tempsys} with $(x(t_k),v(t_k)) = (x^k,v^k).$ Note that the maximum is reached because it is assumed that $\text{supp}[\Tilde{f}_{t_k}]$ is compact. Denote $K_t^x \subset \text{supp}^x [\Tilde{f}_{t_k}]$ (resp. $K_t^v \subset \text{supp}^v [\Tilde{f}_{t_k}]$) the set of points $(x^k,v^k)$ such that the maximum is reached in $\mathsf{X}_k(t)$ (resp. in $\mathsf{V}_k(t)$). By definition, one has $|\mathsf{X}_k(t)|^2 = |x(t,x^k,v^k)|^2$ for every $(x^k,v^k)\in K_t^x.$ It follows from Danskin's theorem \cite{piccoli2015control,danskin1966theory} and from the fact that $\partial_t x(t,x^k,v^k) = v(t,x^k,v^k)$ that
        \begin{equation*}
            \mathsf{X}_k(t)\cdot \Dot{\mathsf{X}}_k(t) = \max\{ x(t,x^k,v^k) \cdot v(t,x^k,v^k) \mid (x^k,v^k)\in K_t^x \}.
        \end{equation*}
        Using the Cauchy-Schwarz inequality, one obtains that 
        \begin{equation*}
            \Dot{\mathsf{X}}_k(t) \le \mathsf{V}_k(t).
        \end{equation*}
        Similarly, by  Danskin's theorem again, one has
        \begin{multline*}
            \mathsf{V}_k(t)\cdot \Dot{\mathsf{V}}_k(t) = \max\{ v(t,x^k,v^k) \cdot \int \kappa\psi(|x-x(t,x^k,v^k)|)(v-v(t,x^k,v^k)) \Tilde{f}_{t_k}dxdv\\
            \mid (x^k,v^k)\in K_t^v \}.  
        \end{multline*}
        Note that by the definition of $K_t^v,$ $v(t,x^k,v^k) \cdot (v-v(t,x^k,v^k))\le 0.$ It holds that 
        \begin{equation*}
            \mathsf{V}_k(t)\cdot \Dot{\mathsf{V}}_k(t)  \le
            -\kappa \psi_0 \int v(t,x^k,v^k) \cdot v(t,x^k,v^k) \Tilde{f}_{t_k} dxdv,\quad (x^k,v^k)\in K_t^v ,
        \end{equation*}
        since $\int v \Tilde{f}_{t_k} dxdv = 0$ and $\int\Tilde{f}_{t_k}dxdv =1.$ Then one has 
        \begin{equation*}
            \Dot{\mathsf{V}}_k(t) \le -\kappa \psi_0 \mathsf{V}_k(t),
        \end{equation*}
        and thus \eqref{eq:suppS} holds.
    \end{proof}

    \begin{lemma}\label{lem: w2 S f}
    Under the same assumption of Lemma \ref{lem:G stability}, there exists a positive constant $\alpha$ such that
        \begin{equation}\label{eq:3.52}
            W_q(\Tilde{S}(\Tilde{f}_{t_k}),\Tilde{f}_{t_{k+1}})\le C e^{-\kappa\psi_0 t_k} \tau^2, \quad q\in[1,\infty),
        \end{equation}
        with $C=C(\kappa,\psi_0,\|\psi\|_\text{{Lip}}, R).$
    \end{lemma}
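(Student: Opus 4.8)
The plan is to compare the two laws by a synchronous (pathwise) coupling of characteristic flows. Recall that $\Tilde f_{t_{k+1}}$ is the push-forward of $\Tilde f_{t_k}$ along the true self-consistent characteristics \eqref{eq: mflsys}: if $(\Tilde X(t_k),\Tilde V(t_k))\sim\Tilde f_{t_k}$ and $(\Tilde X,\Tilde V)$ solves \eqref{eq: mflsys} on $[t_k,t_{k+1}]$, then $(\Tilde X(t_{k+1}),\Tilde V(t_{k+1}))\sim\Tilde f_{t_{k+1}}$, while $\Tilde S(\Tilde f_{t_k})$ is the law of $(\hat X(t_{k+1}),\hat V(t_{k+1}))$ for the frozen-coefficient system \eqref{eq: tempsys} (run over $[t_k,t_{k+1}]$). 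I would couple the two flows through the common initial datum $(\hat X(t_k),\hat V(t_k))=(\Tilde X(t_k),\Tilde V(t_k))\sim\Tilde f_{t_k}$, so that $\Delta X:=\hat X-\Tilde X$ and $\Delta V:=\hat V-\Tilde V$ vanish at $t_k$. Then $W_q^q(\Tilde S(\Tilde f_{t_k}),\Tilde f_{t_{k+1}})\le\mathbb{E}\big[|\Delta X(t_{k+1})|^q+|\Delta V(t_{k+1})|^q\big]$, and it suffices to bound $|\Delta X(t)|+|\Delta V(t)|$ pathwise on $[t_k,t_{k+1}]$, uniformly over initial data in $\text{supp}[\Tilde f_{t_k}]$.

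Subtracting \eqref{eq: tempsys} from \eqref{eq: mflsys} gives $\tfrac{d}{dt}\Delta X=\Delta V$ and $\tfrac{d}{dt}\Delta V=A(t)+B(t)$, where $A(t):=\kappa\int\big[\psi(|x-\hat X|)(v-\hat V)-\psi(|x-\Tilde X|)(v-\Tilde V)\big]\Tilde f_{t_k}(x,v)\,dxdv$ collects the terms integrated against the \emph{same} measure $\Tilde f_{t_k}$, and $B(t):=\kappa\int\psi(|x-\Tilde X|)(v-\Tilde V)\,d(\Tilde f_{t_k}-\Tilde f_t)(x,v)$ carries the error from freezing the density. For $A$, writing $\psi(|x-\hat X|)(v-\hat V)-\psi(|x-\Tilde X|)(v-\Tilde V)=-\psi(|x-\hat X|)\Delta V+\big(\psi(|x-\hat X|)-\psi(|x-\Tilde X|)\big)(v-\Tilde V)$ and using $\psi\le 1$, $\|\psi\|_\text{{Lip}}<\infty$, together with the bound $|v-\Tilde V(s)|\le C(R)$ for $v\in\text{supp}^v[\Tilde f_{t_k}]$ and $s\in[t_k,t_{k+1}]$ — which follows from Lemma \ref{lem: supp}, noting $\text{supp}[\Tilde f_t]$ is bounded in $x$ and, since $\Bar v=0$, satisfies $\mathsf V(t):=\sup\{|v|:(x,v)\in\text{supp}[\Tilde f_t]\}\le C e^{-\kappa\psi_0 t}$ — one gets $|A(t)|\le C(\kappa,\psi_0,\|\psi\|_\text{{Lip}},R)\,(|\Delta X|+|\Delta V|)$.

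For $B$, the test function $g_s(x,v):=\psi(|x-\Tilde X(s)|)(v-\Tilde V(s))$ has Lipschitz constant bounded by $C(R,\|\psi\|_\text{{Lip}})$ uniformly in $s$ and in the initial datum, so the Kantorovich duality gives $|B(s)|\le C\,W_1(\Tilde f_{t_k},\Tilde f_s)$. The characteristic speeds of \eqref{eq: mflsys} are bounded on $[t_k,t_{k+1}]$ by $Ce^{-\kappa\psi_0 t_k}$: the position speed is $|\Tilde V|\le\mathsf V(s)\le Ce^{-\kappa\psi_0 t_k}$ and the velocity speed is $\big|\int\kappa\psi(|x-\Tilde X|)(v-\Tilde V)\Tilde f_s\big|\le 2\kappa\mathsf V(s)\le Ce^{-\kappa\psi_0 t_k}$. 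Hence $W_1(\Tilde f_{t_k},\Tilde f_s)\le Ce^{-\kappa\psi_0 t_k}(s-t_k)\le Ce^{-\kappa\psi_0 t_k}\tau$, and therefore $|B(s)|\le Ce^{-\kappa\psi_0 t_k}(s-t_k)$.

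Combining, $g(t):=|\Delta X(t)|+|\Delta V(t)|$ satisfies $g(t)\le\int_{t_k}^t\big(Cg(s)+Ce^{-\kappa\psi_0 t_k}(s-t_k)\big)\,ds$ with $g(t_k)=0$, so Gr\"onwall's inequality yields $g(t)\le Ce^{-\kappa\psi_0 t_k}(t-t_k)^2 e^{C(t-t_k)}$ for $t\in[t_k,t_{k+1}]$; since $t-t_k\le\tau$ and $\tau$ may be assumed bounded, this is $g(t_{k+1})\le Ce^{-\kappa\psi_0 t_k}\tau^2$ pathwise. Taking $L^q$-expectations over the coupled initial datum and using the coupling bound for $W_q$ gives \eqref{eq:3.52}. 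I expect the crux to be the estimate for $B$: extracting the decay factor $e^{-\kappa\psi_0 t_k}$ from $W_1(\Tilde f_{t_k},\Tilde f_s)$ (this is precisely what will make the per-step errors in Theorem \ref{thm: 2} summable as a geometric series, hence uniform in time) and verifying that every Lipschitz constant and velocity bound entering $A$ and $B$ is uniform in $k$ and in the choice of initial datum — both of which hinge on the flocking estimate of Lemma \ref{lem: supp} and the standing compact-support hypothesis.
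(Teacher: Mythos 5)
Your proposal is correct and follows essentially the same route as the paper: a synchronous coupling of \eqref{eq: mflsys} and \eqref{eq: tempsys} from the same initial datum drawn from $\Tilde{f}_{t_k}$, a splitting of the velocity drift into a frozen-measure part and a density-freezing remainder, a Gr\"onwall argument with the inhomogeneity carrying the $e^{-\kappa\psi_0 t_k}\tau$ decay, and finally an expectation to pass to $W_q$. The only difference is cosmetic: you bound the remainder $B$ via Kantorovich--Rubinstein duality and a characteristic-speed bound on $W_1(\Tilde{f}_{t_k},\Tilde{f}_s)$, whereas the paper bounds the same quantity (its $\mathcal{R}$) by differentiating it in time through the weak form of \eqref{eq: mfl for cs}; both give $|B(s)|\le Ce^{-\kappa\psi_0 t_k}(s-t_k)$, with your duality step requiring only the minor care (which you note) that the test function is Lipschitz on the compact supports furnished by Lemma \ref{lem: supp} and Lemma \ref{lem: suppS}.
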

    
    \begin{proof}
        Let $\mathcal{R}(x,v,t) = \int \psi(|y-x|)(w-v)(\Tilde{f}_t(y,w)-\Tilde{f}_{t_k}(y,w))dydw.$ For $t\in [t_k,t_{k+1}),$ by straightforward calculus, it holds
        \begin{small}
        \begin{equation*}
            \begin{aligned}
                \left| \frac{\partial \mathcal{R}}{\partial t} (x,v,t)\right| = &
                \left| \int \psi(|y-x|)(w-v) \frac{\partial \Tilde{f}_t}{\partial t}(y,w)dydw\right|\\
                = & \left| \int \psi(|y-x|)(w-v) \Bigl[ - w\cdot \nabla
                _y \Tilde{f}_t(y,w) \Bigr.\right.\\
                &+ \left.\left.\nabla_w \cdot \left(\Tilde{f}_t(y,w)\kappa \int \psi (|\Bar{y}-y|)(\Bar{w}-w)\Tilde{f}_t(\Bar{y},\Bar{w})d\Bar{y}d\Bar{w}\right)\right]dydw \right|\\
                = & \left| \int \biggl[w\cdot \nabla_y \psi(|y-x|)(w-v) \Tilde{f}_t(y,w)\right.\biggr.\\
                & - \left. \left.d\kappa\psi(|y-x|)\Tilde{f}_t(y,w)\int  \psi (|\Bar{y}-y|)(\Bar{w}-w)\Tilde{f}_t(\Bar{y},\Bar{w})d\Bar{y}d\Bar{w}\right]dydw \right|\\
                \le&\|\psi\|_\text{{Lip}} \int |w| |v-w| \Tilde{f}_t (y,w) dydw 
                + d\kappa \int |\Bar{w}-w| \Tilde{f}_t(y,w) \Tilde{f}_t (\Bar{y},\Bar{w}) d\Bar{y}d\Bar{w}dydw\\
                \le&\|\psi\|_\text{{Lip}} \int (|w| |v| + |w|^2) \Tilde{f}_t (y,w) dydw 
                + d\kappa \int 2|w| \Tilde{f}_t(y,w) dydw.
            \end{aligned}
        \end{equation*}
        \end{small}
         By Lemma \ref{lem: supp}, 
         \[ \left|\frac{\partial\mathcal{R}}{\partial t}(\Tilde{X}_1,\Tilde{V}_1,t)\right| \le C e^{-\kappa\psi_0 t}.
         \]
        We first consider $q$ to be an even integer.
        For $(\Tilde{X}_1,\Tilde{V}_1)$ obeying \eqref{eq: mflsys} and $(\hat{X}_1,\hat{V}_1)$ obeying \eqref{eq: tempsys}, with $(\Tilde{X}_1,\Tilde{V}_1) (t_k) = (\hat{X}_1,\hat{V}_1) (t_k) \sim \Tilde{f}_{t_k},$ it holds
        \begin{equation*}
            \begin{aligned}
                \frac{d}{dt} &|\Tilde{V}_1 -\hat{V}_1|^2 =  2\sum\limits_{k=1}^d(\Tilde{V}^k_1 -\hat{V}^k_1)\frac{d}{dt}(\Tilde{V}^k_1 -\hat{V}^k_1)\\
                = &2\kappa \sum\limits_{k=1}^d(\Tilde{V}^k_1 -\hat{V}^k_1)
                 \Bigl[\int \psi (|x-\Tilde{X}_1|) (v - \Tilde{V}_1) \Tilde{f}_{t_k}(x,v) dxdv \Bigr.\\
                &\Bigl.- \int\psi (|x-\hat{X}_1|) (v - \hat{V}_1) \Tilde{f}_{t_k}(x,v) dxdv + \mathcal{R}(\Tilde{X}_1,\Tilde{V}_1)\Bigr]^{(k)}\\
                = & 2\kappa  \sum\limits_{k=1}^d(\Tilde{V}^k_1 -\hat{V}^k_1)\left( \int \psi (|x-\Tilde{X}_1|) [(v - \Tilde{V}_1)-(v-\hat{V}_1)] \Tilde{f}_{t_k}(x,v) dxdv\right)^{(k)}\\
                & + 2\kappa \sum\limits_{k=1}^d(\Tilde{V}^k_1 -\hat{V}^k_1) \left(\int [\psi (|x-\hat{X}_1|)- \psi (|x-\Tilde{X}_1|)](v - \hat{V}_1) \Tilde{f}_{t_k}(x,v) dxdv \right) ^{(k)}\\
                & + 2\kappa  \sum\limits_{k=1}^d(\Tilde{V}^k_1 -\hat{V}^k_1) \mathcal{R}(\Tilde{X}_1,\Tilde{V}_1)^{(k)}\\
                \le &  -q\psi_0 | \Tilde{V}_1 - \hat{V}_1|^2 
                +2\kappa\|\psi\|_{\text{Lip}}   |\Tilde{X}_1-\hat{X}_1 | \sum\limits_{k=1}^d
                (\Tilde{V}^k_1 -\hat{V}^k_1)\left(\int (v-\hat{V}_1)\Tilde{f}_{t_k}\right)^{(k)} \\
                &+ 2\kappa |\Tilde{V}_1 - \hat{V}_1 | |\mathcal{R}(\Tilde{X}_1,\Tilde{V}_1)|,\\
                \le &  -2\kappa\psi_0 | \Tilde{V}_1 - \hat{V}_1|^2 
                +2\kappa\|\psi\|_{\text{Lip}}   |\Tilde{X}_1-\hat{X}_1|  |\Tilde{V}_1 - \hat{V}_1 |
                |\hat{V}_1|\\
                &+ 2\kappa |\Tilde{V}_1 - \hat{V}_1 | |\mathcal{R}(\Tilde{X}_1,\Tilde{V}_1)|,
            \end{aligned}
        \end{equation*}
        where the superscript $(k)$ means the $k$-th dimension of vectors. Since $\int v\Tilde{f}_t dxdv = \Bar{v} =0$ by \eqref{eq:tildevf}, and $|\hat{V}_1|\le C e^{-\kappa\psi_0 t_k}$ by Lemma \ref{lem: supp} and Lemma \ref{lem: suppS}, it holds
        \begin{multline}
            \frac{d}{dt} |\Tilde{V}_1 -\hat{V}_1|^2   
                \le   -2\kappa\psi_0 | \Tilde{V}_1 - \hat{V}_1|^2 
                +C\kappa\|\psi\|_{\text{Lip}}   e^{-\kappa\psi_0 t_k} |\Tilde{X}_1-\hat{X}_1|  |\Tilde{V}_1 - \hat{V}_1 |\\
                + 2\kappa |\Tilde{V}_1 - \hat{V}_1 | |\mathcal{R}(\Tilde{X}_1,\Tilde{V}_1)|,
        \end{multline}
        and
        \begin{equation*}
            \frac{d}{dt}|\Tilde{V}_1 - \hat{V}_1| \le -\psi_0 \kappa |\Tilde{V}_1 - \hat{V}_1|
            + C\kappa\|\psi\|_{\text{Lip}}   e^{-\kappa\psi_0 t_k} |\Tilde{X}_1-\hat{X}_1|
            + \kappa  |\mathcal{R}(\Tilde{X}_1,\Tilde{V}_1)|.
        \end{equation*}
        Also, with
        \begin{equation*}
            \frac{d}{dt} | \Tilde{X}_1 - \hat{X}_1| \le |\Tilde{V}_1-\hat{V}_1|,
        \end{equation*}
        one has
        \begin{equation*}
             \frac{d}{dt}\left(|\Tilde{X}_1-\hat{X}_1| + |\Tilde{V}_1-\hat{V}_1| \right) \le C\left(|\Tilde{X}_1-\hat{X}_1| + |\Tilde{V}_1-\hat{V}_1| \right) + C|\mathcal{R}|.
        \end{equation*}
        By Gr\"onwall's inequality, it holds
        \begin{equation*}
            |\hat{X}_1-\Tilde{X}_1| + |\hat{V}_1-\Tilde{V}_1| \le C e^{-\kappa\psi_0 t_k} \tau^2.
        \end{equation*}
        Then
        \begin{equation*}
            \left[\mathbb{E}\left[|\hat{X}_1-\Tilde{X}_1|^q + |\hat{V}_1-\Tilde{V}_1|^q\right]\right]^{\frac{1}{q}}
            \le \left[\mathbb{E}\left[|\hat{X}_1-\Tilde{X}_1| + |\hat{V}_1-\Tilde{V}_1|\right]^q\right]^{\frac{1}{q}} \le C e^{-\kappa\psi_0 t_k} \tau^2,
        \end{equation*}
        which implies
        \[
            W_q(\Tilde{S}(\Tilde{f}_{t_k}),\Tilde{f}_{t_{k+1}})\le C(\kappa,\psi_0,\|\psi\|_\text{{Lip}}, R ) e^{-\kappa\psi_0 t_k}\tau^2.
        \]
    \end{proof}

    The estimate on $W_q(\Tilde{S}(\Tilde{f}_{t_k}),\mathcal{Q}_\infty (\Tilde{f}_{t_{k}}) )$ is given by Lemma \ref{lem: 10}.

    \begin{lemma}\label{lem: 10}
        Under the same assumption of Theorem \ref{thm: 2}, it holds
        \begin{equation}\label{eq:lmm10}
            W_q (\mathcal{Q}_\infty (\Tilde{f}_{t_k}),\Tilde{S}(\Tilde{f}_{t_k})) \le C(\kappa, R) \tau^2, \quad q\in [1,\infty).
        \end{equation}
    \end{lemma}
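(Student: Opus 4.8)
The plan is to realise both $\mathcal{Q}_\infty(\Tilde{f}_{t_k})$ and $\Tilde{S}(\Tilde{f}_{t_k})$ as time-$\tau$ push-forwards of the same measure $\Tilde{f}_{t_k}$ and to compare the two velocity fields that generate them. Since the transient system \eqref{eq: tempsys} is decoupled, $\Tilde{S}(\Tilde{f}_{t_k}) = (\Phi^\infty_\tau)_\#\Tilde{f}_{t_k}$, where $\Phi^\infty_t$ is the flow of $\dot z = b_\infty(z) := (v,\ \xi[\Tilde{f}_{t_k}](x,v))$, with $\xi$ the operator of \eqref{eq: mfl for cs}. On the other side, $\mathcal{Q}_\infty(\Tilde{f}_{t_k}) = f^{(1)}_{t_k+\tau}$, the first-particle marginal of the solution $f^{(p)}$ of the Liouville equation \eqref{the RBM Liouville} with $f^{(p)}_{t_k} = \Tilde{f}_{t_k}^{\otimes p}$; integrating \eqref{the RBM Liouville} over $z_2,\dots,z_p$ and using exchangeability of $f^{(p)}$ yields the transport equation $\partial_t f^{(1)} + \nabla_{z_1}\cdot\bigl(b_{[p]}(z_1,t) f^{(1)}\bigr)=0$, where $b_{[p]}(z_1,t)=(v_1,\,b^v_{[p]}(z_1,t))$ and
\begin{equation*}
b^v_{[p]}(z_1,t) := \frac{\kappa}{f^{(1)}_t(z_1)}\int \psi(|x_2-x_1|)(v_2-v_1)\,f^{(2)}_t(z_1,z_2)\,dz_2 ,
\end{equation*}
so that $\mathcal{Q}_\infty(\Tilde{f}_{t_k}) = (\Phi^{[p]}_\tau)_\#\Tilde{f}_{t_k}$ with $\Phi^{[p]}$ the flow of $\dot z = b_{[p]}(z,t)$. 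By the flocking/diameter estimates (Lemma \ref{lem: supp}, Lemma \ref{lem: suppS}, Lemma \ref{ineq: coarseD}) all measures involved stay supported, for $t\in[t_k,t_{k+1}]$, in one fixed compact set depending only on $R$, and on that set both $b_\infty$ and $b_{[p]}(\cdot,t)$ are Lipschitz in $z$ with a constant $L=L(\kappa,\psi,R)$; here one uses the smoothness of $f_0$, propagated along the flow, to control $f^{(1)}_t,f^{(2)}_t\in C^1$ and to keep $f^{(1)}_t$ bounded away from $0$ in the interior of its support.

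The core of the argument is the one-step consistency estimate $\bigl\|b_{[p]}(\cdot,t) - b_\infty\bigr\|_{L^\infty} \le C(\kappa,\psi,R)(t-t_k)$ for $t\in[t_k,t_{k+1}]$, \emph{with a constant independent of $p$}. At $t=t_k$ equality holds because $f^{(p)}_{t_k}=\Tilde{f}_{t_k}^{\otimes p}$ forces $f^{(2)}_{t_k}=\Tilde{f}_{t_k}\otimes\Tilde{f}_{t_k}$ and hence $b_{[p]}(\cdot,t_k)=b_\infty$; equivalently, $b^v_{[p]}(z_1,t)=\kappa\,\mathbb{E}[\psi(|X_2(t)-X_1(t)|)(V_2(t)-V_1(t))\mid Z_1(t)=z_1]$ for the $p$-particle flow issued from $\Tilde{f}_{t_k}^{\otimes p}$, and the order-$1/\sqrt p$ fluctuation of the empirical force is annihilated by this conditional expectation, which at $t=t_k$ returns exactly $b^v_\infty(z_1)$. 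For $t>t_k$ I split
\begin{equation*}
f^{(2)}_t(z_1,z_2) - f^{(1)}_t(z_1)\Tilde{f}_{t_k}(z_2) = \bigl(f^{(2)}_t - f^{(1)}_t\otimes f^{(1)}_t\bigr)(z_1,z_2) + f^{(1)}_t(z_1)\bigl(f^{(1)}_t-\Tilde{f}_{t_k}\bigr)(z_2).
\end{equation*}
The second term contributes $\kappa\int\psi(|x_2-x_1|)(v_2-v_1)(f^{(1)}_t-\Tilde{f}_{t_k})(z_2)\,dz_2$ --- the factor $f^{(1)}_t(z_1)$ cancels --- which is $O(\kappa C(R)\|f^{(1)}_t-\Tilde{f}_{t_k}\|_{L^1}) = O(\kappa C(R)(t-t_k))$ since $\partial_t f^{(1)}$ is bounded on the compact support, uniformly in $p$. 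The first term is the two-particle correlation; it vanishes at $t_k$ and, because in the $p$-particle Liouville the direct interaction between two particles carries the prefactor $\kappa/(p-1)$, it grows at rate $O(\kappa/(p-1))$, so its contribution to $b_{[p]}-b_\infty$ is $O((t-t_k)/(p-1))$, negligible in both $\tau$ and $p$. All constants are controlled by $\|\psi\|_{\mathrm{Lip}}$, $\psi\le1$, $\psi\ge\psi_0$, and the $p$-uniform flocking bounds ($|V_i|,\mathcal{D}_V\le C(R)$, whence $|\dot V_i|=\bigl|\tfrac{\kappa}{p-1}\sum_{j}\psi(|X_j-X_i|)(V_j-V_i)\bigr|\le C(\kappa,R)$).

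Granting this, the conclusion follows from a Dobrushin/Gr\"onwall estimate: for $z\in\mathrm{supp}\,\Tilde{f}_{t_k}$,
\begin{equation*}
\bigl|\Phi^{[p]}_\tau(z)-\Phi^\infty_\tau(z)\bigr| \le \int_0^\tau \bigl\|b_{[p]}(\cdot,t_k+s)-b_\infty\bigr\|_{L^\infty}\, e^{L(\tau-s)}\,ds \le C(\kappa,\psi,R)\,\tau^2,
\end{equation*}
and, coupling $\mathcal{Q}_\infty(\Tilde{f}_{t_k})$ with $\Tilde{S}(\Tilde{f}_{t_k})$ through $z\mapsto(\Phi^{[p]}_\tau(z),\Phi^\infty_\tau(z))$ with $z\sim\Tilde{f}_{t_k}$,
\begin{equation*}
W_q\bigl(\mathcal{Q}_\infty(\Tilde{f}_{t_k}),\Tilde{S}(\Tilde{f}_{t_k})\bigr)^q \le \int_{\mathbb{R}^{2d}}\bigl|\Phi^{[p]}_\tau(z)-\Phi^\infty_\tau(z)\bigr|^q\,\Tilde{f}_{t_k}(dz) \le C(\kappa,\psi,R)^q\,\tau^{2q},
\end{equation*}
that is $W_q(\mathcal{Q}_\infty(\Tilde{f}_{t_k}),\Tilde{S}(\Tilde{f}_{t_k}))\le C(\kappa,R)\tau^2$ for every $q\in[1,\infty)$, with $p$ absent from the bound.

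The main obstacle is exactly this $p$-uniform one-step consistency estimate: one has to quantify the growth of the two-particle correlation and of the marginal drift $f^{(1)}_t-\Tilde{f}_{t_k}$ directly from the Liouville dynamics \eqref{the RBM Liouville}, keeping every constant independent of $p$ --- this is what upgrades the naive $O(\tau/\sqrt p)$ coupling error into a genuine $O(\tau^2)$ on the level of the \emph{laws} --- and one has to justify the Lipschitz regularity of the nonlocal field $b_{[p]}(\cdot,t)$ up to the boundary of the compact support, which is where the smoothness assumption on $f_0$ and the diameter bounds of Lemmas \ref{lem: supp}--\ref{ineq: coarseD} enter. A slightly softer variant avoids differentiating densities by Taylor-expanding $Z_i(t)=Z_i(t_k)+O(t-t_k)$ in the conditional-expectation formula for $b^v_{[p]}$, using the $p$-uniform flocking bound on $|\dot V_i|$, and checking that the empirical-force fluctuation drops out under the conditioning.
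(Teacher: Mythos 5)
Your proposal takes a genuinely different route from the paper (a marginal/flow comparison in the spirit of Dobrushin rather than a trajectory coupling), but it has a genuine gap at exactly the two points you yourself defer to the end. Everything hinges on (i) representing $\mathcal{Q}_\infty(\Tilde{f}_{t_k})$ as the push-forward of $\Tilde{f}_{t_k}$ by the flow of the conditional field $b_{[p]}$, and (ii) the $p$-uniform consistency bound $\|b_{[p]}(\cdot,t)-b_\infty\|_{L^\infty}\le C(t-t_k)$. Both require dividing by the marginal $f^{(1)}_t$: the field $b_{[p]}$ is a conditional expectation, defined only $f^{(1)}_t$-a.e., and to run the Gr\"onwall step you need it Lipschitz on the support (so that the flow exists and the continuity equation is represented by it), and you need a \emph{pointwise-in-$z_1$} bound $\sup_{z_1}f^{(1)}_t(z_1)^{-1}\int\bigl|f^{(2)}_t-f^{(1)}_t\otimes f^{(1)}_t\bigr|(z_1,z_2)\,dz_2\lesssim (t-t_k)$, uniformly in $p$. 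Your heuristic (the direct pair interaction carries the prefactor $\kappa/(p-1)$, hence correlations of size $(t-t_k)/(p-1)$) only controls integrated quantities such as the $L^1$ norm of the correlation; it gives no uniform control of the conditional covariance after division by $f^{(1)}_t(z_1)$, and near the boundary of the compact support $f^{(1)}_t$ degenerates, so ``bounded away from zero in the interior'' does not help where it is needed. Similarly, the marginal-drift term $\|f^{(1)}_t-\Tilde{f}_{t_k}\|_{L^1}\lesssim t-t_k$ requires $\|\partial_t f^{(1)}_t\|_{L^1}$ bounds, i.e.\ derivative bounds on $f^{(p)}_t$, so you are implicitly importing regularity and nondegeneracy hypotheses on $f_0$ that are not part of Lemma \ref{lem: 10}, whose constant is only $C(\kappa,R)$; the paper deliberately confines that kind of PDE-regularity argument to Proposition \ref{lem: 10old}, which under $C^2\cap W^{2,\infty}$ data only yields the weaker rate $\tau^{2/q}$. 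So the two estimates you flag as ``the main obstacle'' are not finishing touches — they are the lemma, and they are not established.

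For comparison, the paper's proof never touches densities: it couples the transient particle of \eqref{eq: tempsys} with particle $1$ of the $p$-particle system \eqref{eq: the RBM limit discrete} started from the same initial point drawn from $\Tilde{f}_{t_k}$, inserts a forward-Euler intermediate $(X_1^E,V_1^E)$, and uses that, conditionally on $Z_1(0)$, the $p-1$ companions are i.i.d.\ samples of $\Tilde{f}_{t_k}$, so the expected Euler force equals the frozen mean-field force at $Z_1(0)$; the one-step increments are $O(\tau)$ with the flocking decay factors from Lemmas \ref{lem: supp}, \ref{lem: suppS} and \ref{ineq: coarseD}, giving $\mathbb{E}|V_1^E-\hat{V}_1|^2\le C\tau^4$ and $\mathbb{E}|V_1^E-V_1|^2\le C\tau^4$, hence $W_q\le C\tau^2$, using only compact support and flocking. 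The ``softer variant'' you sketch in your last sentence (Taylor expansion of the trajectories plus conditioning on particle $1$) is essentially this argument; if you want to stay at the level of the laws instead, you would have to actually prove the conditional-correlation and regularity estimates above, which is considerably harder than the lemma itself.
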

    \begin{proof}
    
    Set $(\hat{X}_1,\hat{V}_1)$ and $\{(X_i,V_i)\}$ to be the solutions of \eqref{eq: tempsys} and \eqref{eq: the RBM limit discrete} respectively. The initial data $(\hat{X}_1(0),\hat{V}_1(0))$ and $\{(X_i(0),V_i(0))\}$ follow $\Tilde{f}_{t_k}.$ Take the coupling $(\hat{X}_1(0),\hat{V}_1(0)) = (X_1(0),V_1(0)).$ We introduce $\{(X_i^E, V_i^E)\}$ as the approximation of $\{(X_i,V_i)\}$ by the forward Euler method with time step $\tau.$ Then, taking the expectation of the selection of $\{(X_j(0),V_j(0)\}_{j\ne 1},$ one has 
    \begin{equation*}
    \begin{aligned}
        \frac{d}{dt} \frac{1}{2}\mathbb{E} |V_1^E - \hat{V}_1|^2 =&
        \int \kappa \psi (| x- X_1(0)|) (v- V_1(0))\cdot \mathbb{E}(V_1^E(t)-\hat{V}_1(t))\Tilde{f}_{t_k}\\
        & - \int \kappa \psi (| x- \hat{X}_1(t)|) (v- \hat{V}_1(t))\cdot \mathbb{E}(V_1^E(t)-\hat{V}_1(t))\Tilde{f}_{t_k}\\
        \le & \kappa \int \psi (|x-X_1(0)|) (\hat{V}_1(t)-V_1(0))\cdot \mathbb{E}(V_1^E(t)-\hat{V}_1(t))\Tilde{f}_{t_k}\\
        &+ \kappa\int\|\psi\|_{\text{Lip}} |X_1(0)-\hat{X}_1(t)| (v-\hat{V}_1(t))\cdot \mathbb{E}(V_1^E(t)-\hat{V}_1(t))\Tilde{f}_{t_k}.\\
    \end{aligned}
    \end{equation*}
    By Lemma \ref{lem: supp} and Lemma \ref{lem: suppS}, one has 
    \[
    | v-\hat{V}_1(t)|\le Ce^{-\kappa\psi_0 t_k}.
    \]
    Note that 
    \[
    |X_1(0)-\hat{X}_1(t)|\le Ce^{-\kappa\psi_0 t_k}\tau
    \]
    and 
    \[
    |\hat{V}_1(t)-V_1(0)|\le Ce^{-\kappa\psi_0 t_k}\tau.
    \]
    Take the expectation of $(X_1(0),V_1(0))$, one has
    \begin{equation*}
         \frac{d}{dt} \mathbb{E} |V_1^E - \hat{V}_1|^2
         \le Ce^{-\kappa\psi_0 t_k}\tau \sqrt{\mathbb{E}|V_1^E - \hat{V}_1|^2}.
    \end{equation*}
    Hence
    \begin{equation*}
         \mathbb{E} |V_1^E - \hat{V}_1|^2 \le Ce^{-2\kappa\psi_0 t_k}\tau^4.
    \end{equation*}
    Similar with \eqref{eq:qqx}, one has
    \begin{equation*}
         \mathbb{E} |X_1^E - \hat{X}_1|^2 \le Ce^{-2\kappa\psi_0 t_k}\tau^6.
    \end{equation*}
    Now consider
    \begin{equation*}
    \begin{aligned}
        \frac{d}{dt}\frac{1}{2}\mathbb{E}|V^E_1(t) -V_1(t)|^2 =& \mathbb{E}(V^E_1(t) -V_1(t))\frac{\kappa}{p-1}\sum\limits_{j=1}^p\bigg[\psi(X_j(0)-X_1(0))(V_j(0)-V_1(0)) \\
        &- \psi(X_j(t) - X_1(t))(V_j(t) - V_1(t))\bigg]\\
        \le& \mathbb{E}\kappa(V^E_1 -V_1)\bigg[\|\psi\|_\text{Lip}|X_2(0)-X_1(0)- X_2(t) + X_1(t))(V_2(0)-V_1(0)) \\&+ \psi(X_2(t) - X_1(t))(V_2(0) - V_1(0) - V_2(t) + V_1(t))\bigg].
    \end{aligned}
    \end{equation*}
    By Lemma \ref{lem: supp} and Lemma \ref{ineq: coarseD}, one knows that $|V_j(t)|\le e^{-\kappa\psi_0 t_k},$ for any $j$ and $t\in [0,\tau].$ Hence it's easy to obtain  
    \begin{equation*}
         \frac{d}{dt} \mathbb{E} |V_1^E - V_1|^2
         \le Ce^{-\kappa\psi_0 t_k}\tau \sqrt{\mathbb{E}|V_1^E - V_1|^2},
    \end{equation*}
    and 
    \begin{equation*}
         \mathbb{E} |V_1^E - V_1|^2 \le Ce^{-2\kappa\psi_0 t_k}\tau^4, \quad \mathbb{E} |X_1^E - \hat{X}_1|^2 \le Ce^{-2\kappa\psi_0 t_k}\tau^6.
    \end{equation*}
    Then, by the triangular inequality of Wasserstein distance, one has
    \begin{equation*}
         W_2 (\mathcal{Q}_\infty (\Tilde{f}_{t_k}),\Tilde{S}(\Tilde{f}_{t_k})) \le C(\kappa, R)e^{-\kappa\psi_0 t_k} \tau^2.
    \end{equation*}
    The estimate of $W_q$ is similar by calculating 
    \begin{equation*}
        \frac{d}{dt}(\mathbb{E}|V_1^E - \hat{V}_1|^q)^{\frac{1}{q}}=\frac{1}{q}(\mathbb{E}|V_1^E - \hat{V}_1|^q)^{\frac{1}{q}-1}\frac{d}{dt}\mathbb{E}|V_1^E - \hat{V}_1|^q \le  C(\kappa, R)e^{-\kappa\psi_0 t_k}\tau.
    \end{equation*}
        
    \end{proof}

    \subsection{Proof of Theorem \ref{thm: 2}}\label{subsec:pfthm2}
    Now we prove Theorem \ref{thm: 2} by combining the above lemmas.
    \begin{proof}[Proof of Theorem \ref{thm: 2}]
    By the triangular inequality, it holds
    \begin{equation*}
        W_q(\mathcal{Q}_\infty^n (f_0), \Tilde{f}(n\tau))  \le  \sum\limits_{m=1}^n W_q (\mathcal{Q}_\infty^{n-m+1} (\Tilde{f}_{m-1}),\mathcal{Q}_\infty^{n-m}(\Tilde{f}_m)).
    \end{equation*}
    And then, by the stability result in Lemma \ref{lem:G stability}, it holds
    \begin{equation*}
        \sum\limits_{m=1}^n W_q (\mathcal{Q}_\infty^{n-m+1} (\Tilde{f}_{m-1}),\mathcal{Q}_\infty^{n-m}(\Tilde{f}_m))
        \le \sum\limits_{m=1}^n C W_q(\mathcal{Q}_\infty (\Tilde{f}_{m-1}), \Tilde{f}_m).
    \end{equation*}
    Again by the triangular inequality, one has
    \begin{equation*}
        W_q(\mathcal{Q}_\infty (\Tilde{f}_{m-1}), \Tilde{f}_m) \le W_q(\mathcal{Q}_\infty (\Tilde{f}_{m-1}), \Tilde{S}(\Tilde{f}_{m-1}))
        + W_q(\Tilde{S}(\Tilde{f}_{m-1}), \Tilde{f}_m).
    \end{equation*}
    Then, by Lemma \ref{lem: w2 S f} and Lemma \ref{lem: 10}, we have
    \begin{equation}\label{eq:3.81}
        \sum\limits_{m=1}^n W_q (\mathcal{Q}_\infty^{n-m+1} (\Tilde{f}_{m-1}),\mathcal{Q}_\infty^{n-m}(\Tilde{f}_m))
        \le \sum\limits_{m=1}^n  C e^{-\kappa\psi_0 (m-1)\tau}\tau^2 \le  C \tau,
    \end{equation}
    where the constant $C$ is independent of $n.$
        One completes the proof.
    \end{proof}

    \begin{remark}
        Although our analysis is based on $W_{q,2}$ distance, the main results in Section \ref{sec:main} also work on $W_{q,p},$ $p\in [1,\infty),$ by noting that
    \begin{equation*}
        \|x \|_p\le d^{1-\frac{1}{q}+\frac{1}{p}}\|x \|_q, \quad x\in\mathbb{R}^d, \quad 1<p,q<\infty.
    \end{equation*}
    \end{remark}

\section{Application: the RBM-gPC}\label{sec:the RBMgPC}

In this section, we show a simple application for the mean-field limit analysis for the Cucker-Smale model combined with the RBM. 
Based on the above analysis, and inspired by Carrillo's work \cite{carrillo2019particle}, we can construct the numerical scheme for the approximation of stochastic mean-field models of swarming which preserve the nonnegativity of the solution. 

Let us consider the stochastic mean-field equation for the distribution function $f(\theta, x, v; t)$:
\begin{equation}\label{eq: liouville}
    \partial_t f + v\cdot \nabla_x f = \nabla_v \cdot [\mathcal{H}[f]f],
\end{equation}
where
\begin{equation*}
    \mathcal{H}[f](\theta,x,v;t) = \kappa \int\int \psi(|x-y|,\theta)(v-w)f(\theta,y,w;t)dwdy,
\end{equation*}
with $\psi$ satisfying  positivity, boundedness, Lipschitz continuity and mononticity conditions: there exists a positive constant $\psi_M  > 0 $  such that 
\begin{equation*}
    0\le \psi(r,\theta) \le \psi_M,\, \forall r\ge0, \quad \|\psi\|_\text{{Lip}}<\infty,
\end{equation*}
\begin{equation*}
    (\psi(r_1,\theta)-\psi(r_2,\theta))(r_1-r_2)\le 0,\quad r_1,r_2 \in \mathbb{R}_+.
\end{equation*}
Here, $\theta$ is a random variable from a measurable space $(\Omega,\mathcal{F})$ to $(I_\Theta,\mathcal{B}_\mathbb{R}),$ with a probability distribution $\pi (\theta),$ with $I_\Theta \subset \mathbb{R}$ and $\mathcal{B}_\mathbb{R}$ the Borel set.

It is easy to see that the total mass and momentum of \eqref{eq: liouville} is conserved in time.
Note that if we directly apply the classical stochastic Galerkin method to \eqref{eq: liouville}, the solution might lose its positivity. However, we can make use of the microscopic dynamics to approximate $f.$ Our method is to consider the corresponding Cucker-Smale model with random input $\theta$ on the interaction kernel $\psi:$
\begin{equation*}
    \left\{\begin{aligned}
        \partial_t x_i(t,\theta) =& v_i(t,\theta),\, i=1,\cdots, N,\\
        \partial_t v_i(t,\theta) =& \frac{\kappa}{N-1}\sum\limits_{j=1}^N 
        \psi(|x_j(t,\theta)-x_i(t,\theta)|,\theta)(v_j(t,\theta)-v_i(t,\theta)).
    \end{aligned}\right.
\end{equation*}

Denote $\{\Phi_k(\theta)\}_{j=0}^\infty$ to be the gPC basis with respect to $\pi,$ which means that $\Phi_k$ is a polynomial of degree $k,$ and
these polynomials form an orthonormal basis of $L^2(\Omega,\mathcal{F},\pi).$ 
We approximate the position and the velocity of $i$-th agent as follows:
\begin{equation*}
    x_i(\theta,t)\approx x^K_i = \sum\limits_{k=0}^{K} {^{k}\hat{x}_i}(t)\Phi_k(\theta),
    \quad v_i(\theta,t)\approx v^K_i = \sum\limits_{k=0}^{K} {}^{k}\hat{v}_i(t)\Phi_k(\theta),
\end{equation*}
where
\begin{equation*}
    ^{k}\hat{x}_i=\int x_i(\theta,t)\Phi_k(\theta)d\pi,\quad ^{k}\hat{v}_i=\int v_i(\theta,t)\Phi_k(\theta)d\pi.
\end{equation*}
We obtain the following system
\begin{equation}\label{eq: particlesG}
    \left\{\begin{aligned}
        \frac{d}{dt} {}^{\ell}\hat{x}_i (t) =& {}^{\ell}\hat{v}_i (t),\\
        \frac{d}{dt} {}^{\ell}\hat{v}_i (t) =& \frac{\kappa}{N-1}\sum\limits_{j=1}^N\sum\limits_{k=0}^K e_{\ell k}^{ij}(^{k}\hat{v}_j(t)-^{k}\hat{v}_i(t)),
    \end{aligned} \quad l = 0,1,\ldots,K.\right.
\end{equation}
where
\begin{equation*}
    e_{\ell k}^{ij}=\frac{1}{|\Phi_\ell|^2}\int \psi(|x_j-x_i|,\theta) \Phi_k(\theta)\Phi_\ell(\theta)d\pi.
\end{equation*}
Then, we design Algorithm \ref{algo:the RBMgPC}, on the generalized Polynomial Chaos expansion in the random space based on the RBM:

\begin{algorithm}\label{algo:the RBMgPC}
\caption{RBM-gPC}
  Consider $N$ samples $(x_i,v_i)$ with $i=1,\cdots,N$ from the initial $f_0(x,v)$, and fix the batch size $p$\;
  Apply the RBM on \eqref{eq: particlesG}: \For{$k=1$ to $T/\tau$} 
  {Divide $\{1,\cdots, N\}$ into $n = N/p$ batches randomly\;
    \For{each batch $\mathcal{C}_q$}
        {
        Update the position and velocity change by
        \begin{equation*}
            \left\{\begin{aligned}
            \frac{d}{dt} {}^{\ell}\hat{x}_i (t) =& {}^{\ell}\hat{v}_i (t),\\
            \frac{d}{dt} {}^{\ell}\hat{v}_i (t) =& \frac{\kappa}{p-1}\sum\limits_{j\in\mathcal{C}_q}\sum\limits_{k=0}^K e_{\ell k}^{ij}(^{k}\hat{v}_j(t)-^{k}\hat{v}_i(t)).
    \end{aligned}\right.
        \end{equation*}
        }
    }
   Reconstruct $\mathbb{E}_\theta[f(\theta,x,v;T)].$
\end{algorithm}

\begin{remark}
    Although being derived from different numerical methods (the RBM and the Monte Carlo method), Algorithm \ref{algo:the RBMgPC} is similar with the Monte Carlo gPC (MCgPC) proposed in \cite{carrillo2019particle} at first glance. Both can reduce the total cost of \eqref{eq: particlesG} to $\mathcal{O}(KN).$ The main difference between them is that the RBM-gPC preserves the momentum during evolution. Actually, since the RBM updates information in the unit of batches, (instead, the MCgPC updates information of each particle independently at per time step.) for any $t\in[t_m,t_{m+1}],$ the total sum of velocities is conserved in each batch by calculus:
    \begin{equation*}
        \frac{d}{dt} \sum\limits_{i\in \mathcal{C}_q} {}^{\ell}\hat{v}_i (t) = \frac{\kappa}{p-1}\sum\limits_{i,j\in\mathcal{C}_q}\sum\limits_{k=0}^K e_{\ell k}^{ij}(^{k}\hat{v}_j(t)-^{k}\hat{v}_i(t))=0,
    \end{equation*}
    for $l=0,\cdots,K.$ Furthermore, it holds 
    \begin{equation*}
        \sum\limits_{i=1}^N {}^{\ell}\hat{v}_i (t) = \sum\limits_{q=1}^{N/p} \sum\limits_{i\in \mathcal{C}_q}{}^{\ell}\hat{v}_i (t) = \sum\limits_{q=1}^{N/p} \sum\limits_{i\in \mathcal{C}_q}{}^{\ell}\hat{v}_i (t_m) = \sum\limits_{q=1}^{N/p} \sum\limits_{i\in \mathcal{C}_q}{}^{\ell}\hat{v}_i (t_{0}) = \sum\limits_{i=1}^N {}^{\ell}\hat{v}_i (0).
    \end{equation*}
    
\end{remark}




\begin{remark}
    We use $$x_i^{K,R}(t,\theta):= \sum\limits_{k=0}^{K} {^{k}\hat{x}_i}(t)\Phi_k(\theta),\quad v_i^{K,R}(t,\theta):=\sum\limits_{k=0}^{K} {}^{k}\hat{v}_i(t)\Phi_k(\theta)$$ to denote the approximation of position and velocity obtained by Algorithm \ref{algo:the RBMgPC}. The kinetic distribution $f^R_{N,K}(\theta,x,v,t)$ is recovered from the empirical density function $$f^R_{N,K}(\theta,x,v,t) = \frac{1}{N}\sum\limits_{i=1}^N \delta(x-x_i^{K,R}(t,\theta))\otimes\delta(v-v_i^{K,R}(t,\theta)).$$ Common approximations methods for Dirac delta distributions can be used when needed. For example, we fix an upper and lower bound for the $(x,v)$-space, to restrict our computational domain in a cube space and discretize it in uniform cells $\{C_l\}_{l=1}^{N_l}$ of size $|C_l|=h^{2d},$ where $h$ is the edge length of each cell. One can reconstruct the distribution $f^R_{N,K}$ by histograms counting the number of particles belonging to each cell. Then the discrete distribution function $\Bar{f}^R_{N,K}$ reads
    \begin{equation}\label{eq: approx f}
       \Bar{f}^R_{N,K}(\theta,x,v,t) = \frac{1}{|C_l|}\left(\frac{1}{N}\sum\limits_{i=1}^N \mathbb{I}(x_i^{K,R}(t,\theta),v_i^{K,R}(t,\theta)\in C_l)\right)\cdot \left(\sum\limits_{l=1}^{N_l}\mathbb{I}((x,v)\in C_l)\right),
    \end{equation}
    where $\mathbb{I}(\cdot)$ is the indicator function.
    Clearly, the positivity of the distribution function is preserved. One can also refer to a brief review, by Lee et al. \cite{lee2012regularized}, on the numerical methods for regularized Dirac delta functions.
\end{remark}


\begin{remark}
    With respect to the stochastic variable $\theta,$ the dynamics of the $N$-particle system achieves spectral accuracy as we adopt the generalized polynomial chaos (gPC) basis \cite{xiu2002wiener}, provided that we have a smooth dependence of the particle solution from the random field.

    For any given $\theta$ and $K,$ denote the distribution function of the $N$-particle system \eqref{eq: particlesG} by $f_{N,K}.$ 
    If one uses the reconstruction \eqref{eq: approx f}, it holds that
        \begin{equation*}
             \mathbb{E} W_2 (f^R_{N,K},\Bar{f}^R_{N,K}) \le  \left(\mathbb{E}\frac{1}{N}\sum\limits_{i=1}^N |(x_i^R,v_i^R) - c_{l(i)}|^2 \right)^{\frac{1}{2}},
        \end{equation*}
        where $c_{l(i)}$ is the center of the cell $C_l$ containing $(x_i^R,v_i^R).$ Since $h$ is the uniform edge length, $|(x_i^R,v_i^R) - c_{l(i)}|^2 \le \sum\limits^d (h/2)^2$ for any $i.$ Combining with the estimate of $\E W_2 (f_{N,K},f^R_{N,K})$ in Section \ref{subsec: meanCS}, it holds that
        \begin{equation*}
        \mathbb{E} W_2 (f_{N,K},\Bar{f}^R_{N,K})     \le \mathcal{O}(\sqrt{\frac{\tau}{p-1}}) +
       \left\{\begin{aligned}
            &\mathcal{O}(N^{-\frac{1}{4}}) \, &\text{if } d<2,\\
        &\mathcal{O}(N^{-\frac{1}{4}}\sqrt{\log(1+N)}) \, &\text{if } d=2,\\
        &\mathcal{O}(N^{-\frac{1}{d}}) \, &\text{if } d>2,
        \end{aligned}\right.
        + \mathcal{O}(\sqrt{d}h).
    \end{equation*}
\end{remark}

\subsection{Numerical experiments}
Here we present some numerical tests for the RBM-gPC. In the following, we take the Legendre gPC base.

\paragraph{The space homogeneous case}
We consider the space-independent case in the one-dimensional setting:
\begin{equation}\label{case: baby}
    \partial_t f(\theta,v;t) = \partial_v \left[ K(\theta)(v-u)f(\theta,v;t)\right],
\end{equation}
with $u = \int vf(v)dv$ and $K(\theta)=0.5 + 0.01\theta,$ $\theta\sim \mathcal{U}(0,1).$ The long time solution is a Dirac delta $\delta(v-u)$ provided $K(\theta)>0,$ see \cite{tosin2017boltzmann,carrillo2010asymptotic}. We take the Legendre polynomial basis. The gPC approximation is given for $h=0,\cdots,M$ by 
\begin{equation}\label{eq: fgPC}
    \partial_t \hat{f}_h (v;t) = \frac{1}{|\Phi_h|^2} \partial_v \left[ \sum\limits_{k=0}^M (v-u) \mathcal{H}_{hk}\hat{f}_{t_k}(v;t)\right],
\end{equation}
where
\begin{equation*}
    \mathcal{H}_{hk} = \int K(\theta)\Phi_h (\theta) \Phi_k (\theta) d g (\theta), \quad 
    \hat{f}_h (v;t) = \int f(\theta,v;t) \Phi_h (\theta) d g (\theta).
\end{equation*}
We consider an initial density function $f_0(v)$ by
\begin{equation*}
    f_0(v) = \beta\left[\exp{\left(-\frac{(v-\mu)^2}{2\sigma^2}\right)} + \exp{\left(-\frac{(v+\mu)^2}{2\sigma^2}\right)} \right], \quad \sigma^2=0.1, \,\mu = 0.5,
\end{equation*}
with $\beta>0$ a normalization constant. Discrete samples of the initial data of the ODE system are obtained from $f_0(v).$ Denote the expected temperature of the system $\mathcal{T}$ by 
\begin{equation*}
    \mathcal{T}(f) = \int (v-u)^2 f(\theta,v;t)dvdg(\theta).
\end{equation*}
In Figure \ref{fig:babyshape}, we show the evolution of the expected density toward the Dirac delta function $\delta(v-u),\,u=0$ for the RBM-gPC scheme at different times with $p=2$ and $N=10^4.$
\begin{figure}[htbp]
    \centering
    \includegraphics[scale=0.5]{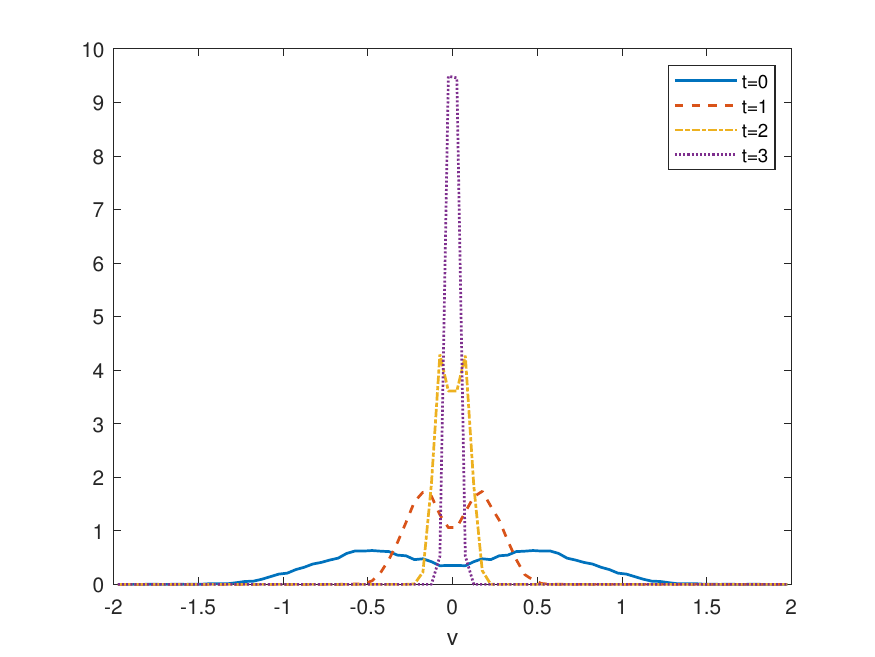}
    \caption{Evolution of the expected density $\delta(v-u),\,u=0$ for the RBM-gPC scheme.}
    \label{fig:babyshape}
\end{figure}
In Figure \ref{fig:tp}, we take $T=0.5,$ $K=3,$ $dt=10^{-2}$ and $dv=10^{-2}.$ Time integration is performed through a $4th$ order Runge-Kutta method. In Figure \ref{fig:Ntp}, we evaluate the mean square error (MSE) for the expected temperature 
\begin{equation*}
    \text{MSE}_\text{T} := \frac{1}{n_m}\sum\limits_{i=1}^{n_m}\left|\mathcal{T}(f_{\text{ref}}) - \mathcal{T}(f_{\text{rbm}}^{(i)}) \right|^2,
\end{equation*}
as $N$ grows with $p=2,$ where $\mathcal{T}(f_{\text{ref}})$ denotes the reference expected temperature obtained by \eqref{eq: fgPC} through a central difference scheme, and $\mathcal{T}(f_{\text{rbm}}^{(i)})$ denotes the expected temperature obtained by the RBM-gPC in the $i$-th turn. We have applied $n_m=100$ for random initialization and batch splits. 
Figure \ref{fig:tvtp} shows the difference of TV distance between the expected reference distribution and the expected distribution by the RBM-gPC:
\begin{equation*}
    \text{Err}_\text{TV} := \left| \mathbb{E}_\theta f_{\text{ref}} - \mathbb{E}_\theta \frac{1}{n_m}\sum\limits_{i=1}^{n_m} f_{\text{rbm}}^{(i)} \right|_{L^1},
\end{equation*}
with $n_m =100$ as well.


\begin{figure}[htbp]
    \centering
    \subfigure[$\text{MSE}_\text{T}$]{
    \begin{minipage}[t]{0.48\textwidth}
        \centering
        \includegraphics[scale=0.46]{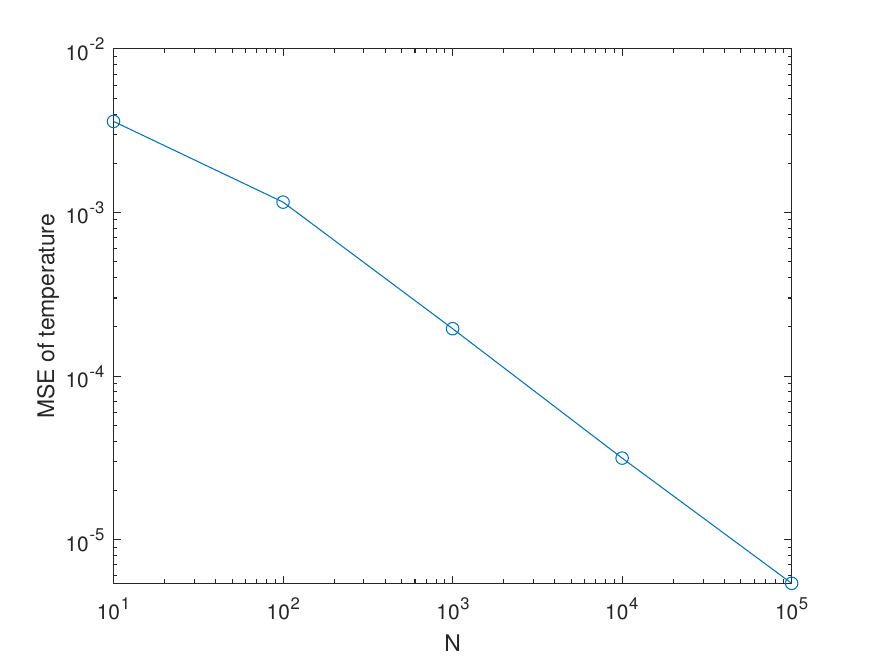}
    \label{fig:Ntp}
    \end{minipage}}
    \subfigure[$\text{Err}_\text{TV}$]{
    \begin{minipage}[t]{0.48\textwidth}
        \centering
        \includegraphics[scale=0.46]{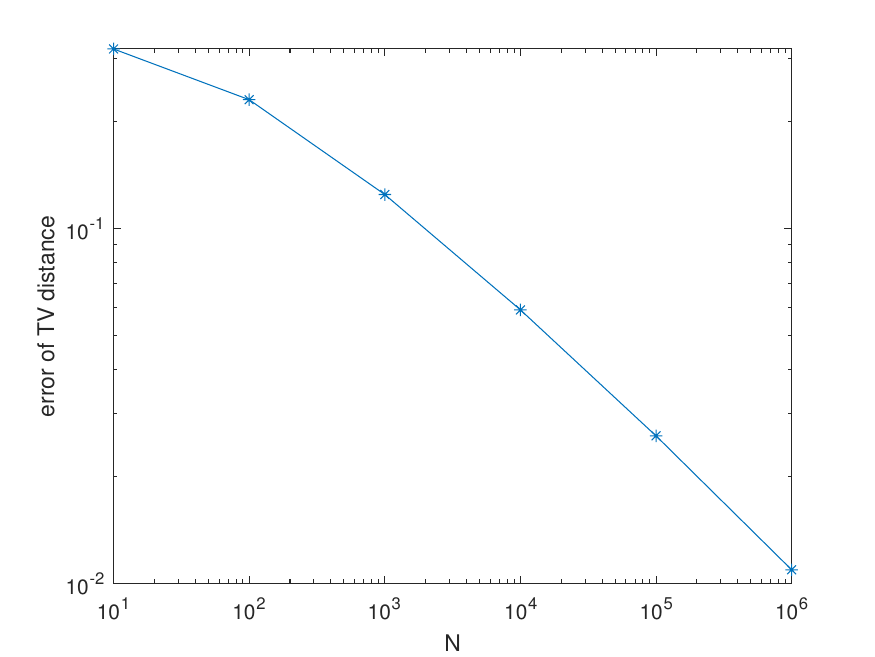}
        \label{fig:tvtp}
        \end{minipage}}
        \caption{(a): The $\text{MSE}_\text{T}$ of expected temperature between reference solution and the RBM-gPC solution. (b): The $\text{Err}_\text{TV}$ between the expected reference distribution and the expected distribution by the RBM-gPC. Both are obtained at $T=0.5,$ with $dt=dv=10^{-2},$ $p=2.$}
        \label{fig:tp}
\end{figure}
Also, Figure \ref{fig:msepp} and Figure \ref{fig:msedt} show the decay of $\text{MSE}_\text{T}$ as the batch size $p$ and the time step $dt$, respectively. The reference distribution of Figure \ref{fig:msepp} is obtained by the RBM-gPC with $N=2^8,$ $p = 2^8,$ and $dt = 10^{-2},$ and of Figure \ref{fig:msedt} is obtained with $N=2^8,$ $p = 2,$ and $dt = 10^{-2}.$ We still take $n_m=100.$
\begin{figure}[htbp]
    \centering
    \subfigure[]{
    \begin{minipage}[t]{0.48\textwidth}
        \centering
        \includegraphics[scale=0.46]{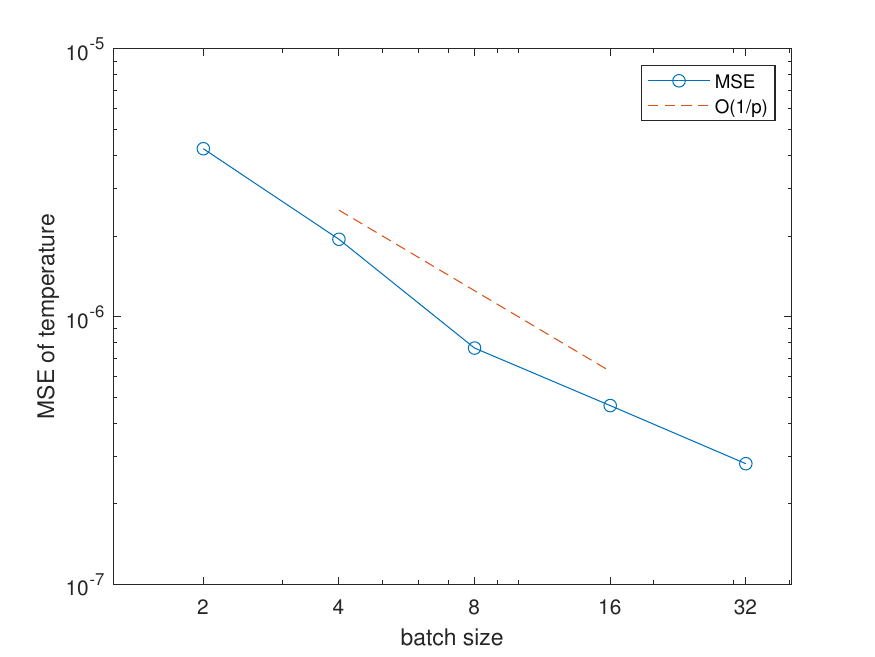}
    \label{fig:msepp}
    \end{minipage}}
    \subfigure[]{
    \begin{minipage}[t]{0.48\textwidth}
        \centering
        \includegraphics[scale=0.46]{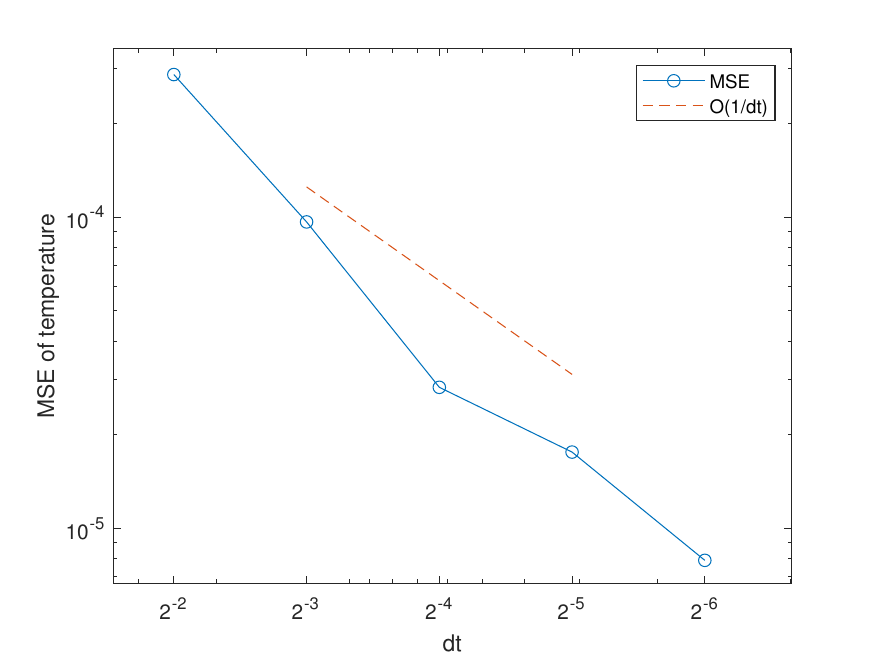}
    \label{fig:msedt}
    \end{minipage}}
    \caption{(a):The $\text{MSE}_\text{T}$ of expected temperature of the RBM-gPC solution, with $dt=10^{-2}$ and batch size $p$ varying. (b): The $\text{MSE}_\text{T}$ of expected temperature of the RBM-gPC solution, with $p=2$ and time step $dt$ varying. Both are obtained at $T=0.5$ with $N=2^8.$}
\end{figure}

\paragraph{Stochastic 1D Cucker-Smale dynamics}
In this test, we consider the 1D Cucker-Smale dynamics. Let us consider the initial distribution as the following bivariate and bimodal distribution
\begin{equation*}
    f_0(x,v) = \frac{1}{2\pi \sigma_x\sigma_v}\exp\left(-\frac{x^2}{2\sigma_v ^2}\right)\left[\exp{\left(-\frac{(v+1)^2}{2\sigma_x ^2}\right)}+ \exp{\left(-\frac{(v-1)^2}{2\sigma_x^2}\right)}\right],
\end{equation*}
with $\sigma_x^2 =0.5,$ $\sigma_v^2=0.2.$ Our initial data for particle positions and velocities are
sampled from $f_0.$ The stochastic interactions are given by
\begin{equation}\label{eq:Hkernel}
    H(\theta; |x_i - x_j|) = \frac{1}{(1+|x_i - x_j|^2)^{\gamma(\theta)}},
\end{equation}
with $\gamma (\theta) = 0.1+ 0.05 \theta, \theta \sim \mathcal{U}[-1,1].$ The results are obtained through the RBM-gPC scheme with $dt = 10^{-2},$ $N=10^4$, batch size $p =2,$ and $K=3.$ The expected distribution is reconstructed by \texttt{ksdensity} in Matlab in the domain $[-3,3]\times[-3,3]$ with 100 gridpoints in both space and velocity. Figure \ref{fig:1d} presents the evolution over the time interval $t\in[0,4]$ of the expected distribution.

\begin{figure}[htbp]
    \centering
    \includegraphics[scale = 0.36]{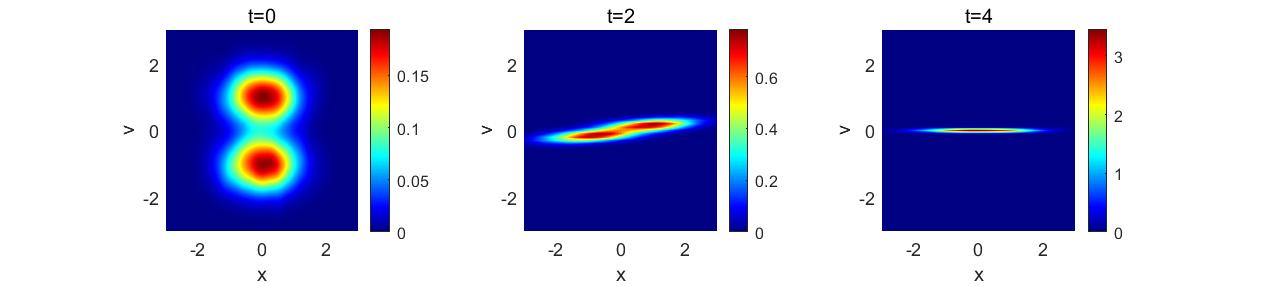}
    \caption{The expected distribution of the stochastic 1D Cucker-Smale dynamics with $N=10^4$ agents.}
    \label{fig:1d}
\end{figure}

\paragraph{Stochastic 2D Cucker-Smale dynamics}
Figure \ref{fig:2d} shows the evolution over the time interval $t\in [0,4]$ of the 2D Cucker-Smale nonhomogeneous mean-field model. For the initial distribution, we consider uniformly distributed $N=10^4$ particles on a 2D annulus with a circular counterclockwise motion
\begin{equation*}
    f_0 (x,v) = \frac{1}{|C|}\chi (x\in C)\delta \left( v - \frac{k\wedge x}{|x|}\right),
\end{equation*}
being $C: = \{x\in \mathbb{R}^2: 0.5\le |x_1-x_2|\le 1 \}$ and $k$ the fundamental unit vector of the $z$-axis, where $|C|$ means the cardinality of set $C.$
The interaction kernel shares the same form with \eqref{eq:Hkernel}.
The evolution shows the flocking phenomenon in the top row and that the mean velocity is conserved in the bottom row of Figure \ref{fig:2d}. The results are obtained through the RBM-gPC scheme with $dt = 10^{-2},$ $N=10^4$, batch size $p =2,$ and $K=3.$ The reconstruction follows the same method as in the 1D case.
\begin{figure}[htbp]
    \centering
    \includegraphics[scale=0.35]{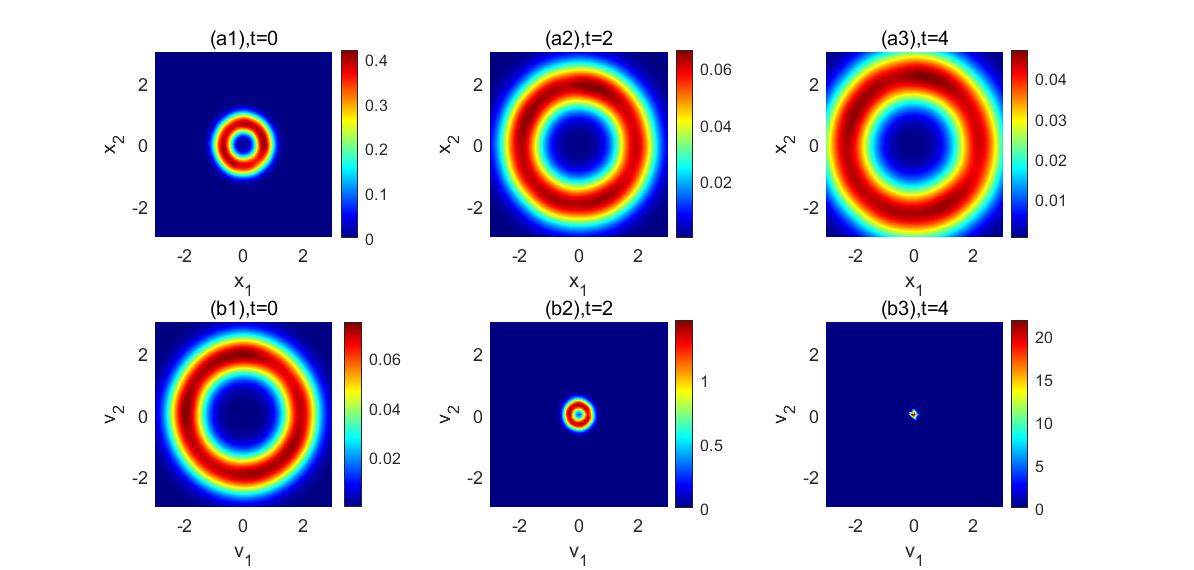}
    \caption{The expected distribution of the stochastic 2D Cucker-Smale dynamics with $N=10^4$ agents.}
    \label{fig:2d}
\end{figure}

\section{Conclusion}\label{sec:conclusion}
In conclusion, we give an analysis on the mean-field limit of the RBM for the Cucker-Smale model and obtain a long-time approximation. In addition, as a simple application, we propose a generalized Polynomial Chaos based Random Batch Method (RBM-gPC) as a numerical method to simulate the mean-field limit dynamics of the Cucker-Smale model with random inputs.

This paper focuses on the separate limits of the number of particles $N$ and the time step $\tau$ but not a uniform limit of $(N,\tau).$ For future work, it will be interesting to investigate the mixture of particles as time evolves, and to study on the uniform limit. Also, numerically, an error estimate on this model applied with the particle method (corresponding to a new model) can be an interesting topic to study.

\section*{Acknowledgement}
We thank Prof. Shi Jin and Prof. Lei Li for detailed and valuable comments and advice on the manuscript. This work is supported by the NSFC grant (No. 12031013, 12201404), the Shanghai Municipal Science and Technology Key Project (No. 22JC1402300), and Project supported by the National Science Foundation for International Senior Scientists (No. 12350710181).

\begin{appendix}
    
    \section{Proof of Lemma \ref{ineq: coarseD}}\label{ass:A}
    The proof is given by Lemma 2.4 in \cite{ha2021uniform}.
    \begin{proof}[Proof of Lemma \ref{ineq: coarseD}]
        For the first assertion, we claim that the relative velocities are non-increasing in time. Let $t\in [t_{m-1}, t_m)$ be given. Then one can choose time-dependent indices $k$ and $\ell$ such that 
        \begin{equation*}
            |V_k^R (t) - V_\ell^R (t)| = \mathcal{D}_{V^R}(t).
        \end{equation*}
        Then for such $k$ and $\ell,$ one has 
        \begin{equation}\label{eq:A1}
        \begin{aligned}
            \frac{d}{dt} | V_k^R(t) -V_\ell^R(t)|^2 = &
            2(V_k^R(t) - V_\ell^R (t)) \cdot\frac{d}{dt}(V_k^R(t) - V_\ell^R (t))\\
            = & \frac{2\kappa}{p-1} \sum\limits_{j\in [k]_m}\psi (|X_j^R - X_k^R|)(V_j^R - V_k^R)\cdot(V_k^R - V_\ell^R)\\
            &- \frac{2\kappa}{p-1}\sum\limits_{j\in[i]_m}\psi (|X_j^R - X_\ell^R|)(V_j^R - V_\ell^R)\cdot(V_k^R - V_\ell^R).
        \end{aligned}
        \end{equation}
        In order to show that the right-hand side of \eqref{eq:A1} is not positive, we use the maximality of $| V_k^R(t) -V_\ell^R(t)|$ at time $t.$ Since
        \begin{equation*}
            | V_k^R(t) -V_\ell^R(t)| \ge | V_j^R(t) -V_\ell^R(t)|, \quad j = 1,\cdots,N,
        \end{equation*}
        one has
        \begin{multline*}
            (V_j^R(t)-V_k^R (t))\cdot(V_k^R(t)-V_\ell^R (t)) =
            - ((V_k^R-V_\ell^R) - (V_j^R-V_\ell^R))\cdot (V_k^R - V_\ell^R)\\
            \le - | V_k^R(t) -V_\ell^R(t)|^2 + | V_j^R(t) -V_\ell^R(t)|| V_k^R(t) -V_\ell^R(t)|\le 0, \quad j= 1,\cdots,N.
        \end{multline*}
        Similarly, one has 
        \begin{equation*}
            (V_j^R(t)-V_\ell^R (t))\cdot (V_k^R(t)-V_\ell^R (t)) \ge 0, \quad j=1,\cdots,N.
        \end{equation*}
        Hence, $| V_k^R(t) -V_\ell^R(t)|^2$ is non-increasing in time.

        Then by the definition od $\mathcal{D}_{X^R}$ and $\mathcal{D}_{V^R},$ one has 
        \begin{equation*}
            \frac{d}{dt}\mathcal{D}_{X^R}(t)\le \mathcal{D}_{V^R}(t) \le\mathcal{D}_{V^R}(0), \quad a.e.\, t>0.
        \end{equation*}
        This yields the second assertion on $\mathcal{D}_{X^R}.$
    \end{proof}

    \section{Details on Proposition \ref{lem: 10old}}\label{app:semigroup}

    \paragraph{Fixed-time regularity of $\Tilde{f}$.} First we check the regularity of $\Tilde{f}$ by Lemma \ref{lem: C2}, which is needed in the proof of Proposition \ref{lem: 10old}.
    
    \begin{lemma}\label{lem: C2}
    For the flocking kinetic model \eqref{eq: mfl for cs}, suppose that the initial probability measure $f_0 \in (C^2\cap W^{2,\infty})(\mathbb{R}^{2d})$ is compactly supported, and the interaction kernel $\psi$ satisfies $$\sup\limits_{r\ge 0} (|\psi^\prime(r)| + |\psi^{\prime\prime}(r)|) \le \epsilon_\psi$$ with a constant $\epsilon_\psi.$ Then, for any $T\in (0,\infty),$ there exists a unique classical solution $\Tilde{f}_t \in C^2([0,T)\times \mathbb{R}^{2d})$ for \eqref{eq: mfl for cs}. 
\end{lemma}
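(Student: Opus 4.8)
The plan is to construct the solution by the method of characteristics and then bootstrap its regularity. First, by Lemma~\ref{lem: supp} the support of the (measure-valued) solution $\Tilde{f}_t$ of \eqref{eq: mfl for cs} with $\Tilde{f}_0=f_0$ stays inside a fixed compact set on $[0,T]$, so $\rho_t:=\int\Tilde{f}_t\,dv$ and $j_t:=\int v\,\Tilde{f}_t\,dv$ have uniformly bounded mass and support, and hence the mean-field force
\begin{equation*}
    \xi[\Tilde{f}_t](x,v,t)=\kappa\,(\psi_{*}*j_t)(x)-\kappa v\,(\psi_{*}*\rho_t)(x),\qquad \psi_{*}(z):=\psi(|z|),
\end{equation*}
is bounded and globally Lipschitz in $(x,v)$, uniformly in $t\in[0,T]$. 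A standard Banach fixed-point / Wasserstein-stability argument (contraction of the map $\mu_\cdot\mapsto(\Phi^{\mu}_\cdot)_{\#}f_0$ in $\sup_{t\le T}W_1$; see e.g.\ \cite{ha2018uniform,piccoli2015control}) then gives the unique solution, represented as the pushforward $\Tilde{f}_t=(\Phi_t)_{\#}f_0$ of the characteristic flow $\Phi_t(z_0)=(X(t;z_0),V(t;z_0))$ solving the self-consistent system \eqref{eq: mflsys}, i.e.\ $\dot X=V$, $\dot V=\xi[\Tilde{f}_t](X,V,t)$, $\Phi_0=\mathrm{id}$; $\Phi_t$ is a bi-Lipschitz diffeomorphism of $\mathbb{R}^{2d}$, uniformly on $[0,T]$, and since $f_0\in L^\infty$ the pushforward has a bounded, compactly supported density.

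Second, I would upgrade the spatial regularity by a two-step bootstrap based on the representation obtained by integrating the transport equation along characteristics,
\begin{equation*}
    \Tilde{f}_t(z)=f_0\bigl(\Phi_t^{-1}(z)\bigr)\,\exp\!\left(\kappa d\int_0^t (\psi_{*}*\rho_s)\bigl(\Phi_s(\Phi_t^{-1}(z))\bigr)\,ds\right),\qquad \nabla_v\!\cdot\!\xi[\Tilde{f}_s]=-\kappa d\,(\psi_{*}*\rho_s).
\end{equation*}
\emph{Step (i):} from $\Tilde{f}_t\in L^\infty$ with compact support one gets $\rho_t,j_t\in L^\infty\cap L^1$; since $\psi_{*}$ is Lipschitz with $|\nabla\psi_{*}|\le\|\psi\|_{\mathrm{Lip}}$ a.e., differentiation under the integral gives $\nabla(\psi_{*}*\rho_t)=(\nabla\psi_{*})*\rho_t\in C^0_x$, so $\psi_{*}*\rho_t,\psi_{*}*j_t\in C^1_x$, hence $\xi[\Tilde{f}_t]\in C^1_{x,v}$, so $\Phi_t\in C^1_z$, and since $\nabla_v\!\cdot\!\xi[\Tilde{f}_t]\in C^1_x$ the displayed formula yields $\Tilde{f}_t\in C^1_z$. \emph{Step (ii):} now $\rho_t,j_t\in C^1_x$ with compact support, so one further derivative can be placed on the moment: $\nabla^2(\psi_{*}*\rho_t)=(\nabla\psi_{*})*(\nabla\rho_t)\in C^0_x$ (convolution of an $L^\infty$ function with a continuous compactly supported one), hence $\psi_{*}*\rho_t,\psi_{*}*j_t\in C^2_x$, $\xi[\Tilde{f}_t]\in C^2_{x,v}$, $\Phi_t,\Phi_t^{-1}\in C^2_z$, $\nabla_v\!\cdot\!\xi[\Tilde{f}_t]\in C^2_x$, and the formula gives $\Tilde{f}_t\in C^2_z$, with a uniform bound on $\|\Tilde{f}_t\|_{C^2}$ over $[0,T]$ depending only on $\kappa,\epsilon_\psi,R,d,T$ and $\|f_0\|_{W^{2,\infty}}$. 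I expect this bootstrap to be the main obstacle; its two genuine difficulties are: (a) the apparent circularity, since the force depends on the very solution whose smoothness is being proved — resolved by observing that $\nabla_v\!\cdot\!\xi$ depends only on the \emph{spatial} density $\rho_s$, so no derivative is actually lost; and (b) the non-smoothness of $z\mapsto\psi(|z|)$ at $z=0$ — circumvented because $\xi$ enters only through the convolutions $\psi_{*}*\rho$ and $\psi_{*}*j$, so one derivative may always be absorbed by the Lipschitz kernel and any additional derivative by the macroscopic moments.

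Finally, joint regularity in $(t,z)$ follows from the continuity of $t\mapsto\Phi_t$ (and of $t\mapsto\rho_t,j_t$) in the $C^2$ topology, obtained by applying Gr\"onwall's inequality to the characteristic ODE together with its first and second variational equations; the equation \eqref{eq: mfl for cs}, rewritten as $\partial_t\Tilde{f}=-v\cdot\nabla_x\Tilde{f}-\xi[\Tilde{f}]\cdot\nabla_v\Tilde{f}-(\nabla_v\!\cdot\!\xi[\Tilde{f}])\Tilde{f}$, then exhibits $\partial_t\Tilde{f}$, and after one more differentiation all second-order derivatives of $\Tilde{f}$, as continuous functions on $[0,T)\times\mathbb{R}^{2d}$, so that $\Tilde{f}\in C^2([0,T)\times\mathbb{R}^{2d})$. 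Uniqueness in the classical class is inherited from uniqueness at the measure level. The quantitative constants recorded along the way are exactly what is needed for the estimate \eqref{eq:lmm10old} in Proposition~\ref{lem: 10old}.
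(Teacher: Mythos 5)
Your proposal is correct in substance but follows a genuinely different route from the paper. The paper does not construct the solution via characteristics: it imports local existence of classical solutions from \cite{Ha2008particle} by a fixed-point argument, rewrites \eqref{eq: mfl for cs} in the non-conservative form $\mathcal{T}\Tilde{f}=-\Tilde{f}\,\nabla_v\cdot\xi[\Tilde{f}]$ with $\mathcal{T}=\partial_t+v\cdot\nabla_x+\xi[\Tilde{f}]\cdot\nabla_v$, proves the differential inequalities \eqref{eq:C2-1}--\eqref{eq:C2-6} for $\Tilde{f}$ and its first and second derivatives, and closes with Gr\"onwall applied to $F(t)=\|\Tilde{f}(\cdot,t)\|_{W^{2,\infty}}$ plus a continuation principle. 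You instead build the solution as the pushforward of $f_0$ along the self-consistent flow of \eqref{eq: mflsys} and bootstrap regularity through the exponential representation formula, always placing at most one derivative on the kernel and the remaining ones on the macroscopic moments $\rho_t,j_t$. Each route buys something: the paper's argument directly delivers the quantitative $W^{2,\infty}$ growth bound $F(t)\le F(0)e^{Ct}$ in exactly the form consumed by Proposition~\ref{lem: 10old}, and is short once the local theory of \cite{Ha2008particle} is granted; your argument is self-contained and, notably, never differentiates $x\mapsto\psi(|x|)$ twice, so it essentially does not use the hypothesis on $\psi''$ and it sidesteps the singularity of the Hessian of $\psi(|\cdot|)$ at the origin (terms of size $\psi'(r)/r$), which the paper's omitted second-order estimates \eqref{eq:C2-4}--\eqref{eq:C2-6} quietly encounter. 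Two points you should still write out, though neither is a fatal gap: (i) continuity in $t$ of the convolutions $(\nabla\psi_{*})*\rho_t$ and $(\nabla\psi_{*})*(\nabla\rho_t)$, which is what licenses $C^1$ and then $C^2$ dependence of the flow on initial data; since $\nabla\psi_{*}$ is only bounded and a.e.\ continuous, narrow continuity of $\rho_t$ alone does not suffice, but $W_1$-continuity of $t\mapsto\rho_t$ combined with the uniform $L^\infty$ bound does; and (ii) the full joint $C^2$ regularity in $(t,z)$, i.e.\ continuity of $\partial_t\nabla_z\Tilde{f}$ and $\partial_t^2\Tilde{f}$, which requires differentiating the equation once in $t$ and expressing $\partial_t\rho_t,\partial_t j_t$ through the local conservation laws; both steps are routine but are currently asserted rather than proved.
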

    The proof stem of Lemma \ref{lem: C2} is generally a smoothness check for $\Tilde{f}$ inspired by in [\cite{Ha2008particle}, Theorem 3.1]. The main difference with \cite{Ha2008particle} is that here we need better regularity for the operator $(\mathcal{\Bar{L}}^*)^2$ and $(\mathcal{\Tilde{L}}_k^*)^2$ in Lemma \ref{lem: 10}.
    \begin{proof}[Proof of Lemma \ref{lem: C2}.]
    Local existence follows from [\cite{Ha2008particle} Theorem 3.1] by the standard fixed point argument. One only needs to obtain a $C^2$ bound of $f$ to conclude the global existence of classical solutions. One can express the non-conservative kinetic model \eqref{eq: mfl for cs} in terms of the non-linear transport operator $\mathcal{T}:=\partial_t + v\cdot \nabla_x + \xi [\Tilde{f}] \cdot \nabla_v,$
    $$ \mathcal{T}\Tilde{f} = -\Tilde{f}\nabla_v \cdot \xi[\Tilde{f}].$$
    We claim that there exists a positive constant $C(d,\kappa,\epsilon_\psi, R)$ such that
    \begin{equation}\label{eq:C2-1}
        \left | \mathcal{T}\Tilde{f} \right| \le C |\Tilde{f}|,
    \end{equation}
    \begin{equation}\label{eq:C2-2}
        \left | \mathcal{T}(\partial_{x_i}\Tilde{f}) \right| \le C (|\Tilde{f}| + |\nabla_v \Tilde{f}| + |\partial_{x_i}\Tilde{f})|,
    \end{equation}
    \begin{equation}\label{eq:C2-3}
        \left | \mathcal{T}(\partial_{v_i }\Tilde{f}) \right| \le C (|\nabla_v \Tilde{f}| + |\partial_{x_i}\Tilde{f}|),
    \end{equation}
    \begin{equation}\label{eq:C2-4}
        \left | \mathcal{T}(\partial_{x_i x_j}\Tilde{f}) \right| \le C (  |\partial_{x_i}\Tilde{f}| + |\partial_{x_j}\Tilde{f}| + |\partial_{x_i x_j}\Tilde{f}| +|\nabla_v \Tilde{f}| + |\nabla_v\partial_{x_i}\Tilde{f}| + |\nabla_v\partial_{x_j}\Tilde{f}|),
    \end{equation}
    \begin{equation}\label{eq:C2-5}
        \left | \mathcal{T}(\partial_{x_i v_j}\Tilde{f}) \right| \le C ( |\partial_{x_i x_j}\Tilde{f}| +|\nabla_v \Tilde{f}| + |\partial_{v_j}\partial_{x_i}\Tilde{f}| + |\nabla_v\partial_{v_j}\Tilde{f}| + |\nabla_v\partial_{x_i}\Tilde{f}|),
    \end{equation}

    \begin{equation}\label{eq:C2-6}
        \left | \mathcal{T}(\partial_{v_i v_j}\Tilde{f}) \right| \le C ( |\partial_{x_i v_j}\Tilde{f}| + |\partial_{x_j}\partial_{v_i}\Tilde{f}| +  |\partial_{v_j}\partial_{v_i}\Tilde{f}|).
    \end{equation}

    To verify these inequalities, note that $-\nabla_v\cdot \xi [\Tilde{f}] = \kappa d \int \psi (|y-x|)\Tilde{f}(y,w,t)dydw$ and thus $|\nabla_v\cdot \xi [\Tilde{f}]| \le \kappa d.$ By direct calculus, one obtains the following estimates:\\~\\
    For \eqref{eq:C2-1},
    \begin{equation*}
         \left | \mathcal{T}\Tilde{f} \right| \le  \left| -\kappa \Tilde{f} \nabla_v \cdot \int \psi(|y-x|) (w-v) \Tilde{f}(y,w,t) dydw\right| \le \kappa d |\Tilde{f}|;
    \end{equation*}
    for \eqref{eq:C2-2},
    \begin{equation*}
    \begin{aligned}
        \left | \mathcal{T}(\partial_{x_i}\Tilde{f}) \right| &\le \left| - (\partial _{x_i} \xi [\Tilde{f}]) \cdot \nabla_v \Tilde{f} -  (\partial_{x_i} \nabla_v \cdot \xi [\Tilde{f}])\Tilde{f}  -  (\nabla_v \cdot \xi [\Tilde{f}])\partial _{x_i} \Tilde{f} \right|\\
        &\le \kappa \epsilon_\psi (\sqrt{M_2^v f_0} + 2R) |\nabla_v \Tilde{f}| + \kappa d \epsilon_\psi |\Tilde{f}| + \kappa d |\partial_{x_i} \Tilde{f}|)\\
        & \le C (|\Tilde{f}| + |\nabla_v \Tilde{f}| + |\partial_{x_i}\Tilde{f})|;
    \end{aligned}
    \end{equation*}
    For \eqref{eq:C2-3},
    \begin{equation*}
        |\mathcal{T}\partial_{v_i} \Tilde{f} | \le |-\partial_{x_i} \Tilde{f}| + \kappa |\nabla_v \Tilde{f}| + \kappa d |\partial_{v_i} \Tilde{f}|
        \le C (|\nabla_v \Tilde{f}| + |\partial_{x_i}\Tilde{f}|).
    \end{equation*}
    The estimate on \eqref{eq:C2-4}-\eqref{eq:C2-6} is similar and we omit it for brevity.
    
    Now, let $F(t)$ measure the $W^{2,\infty}$-norm of $\Tilde{f},$
    $$ F := \|\Tilde{f}(\cdot,t)\| _{W^{2,\infty}} .$$
    The above inequalities imply
    $$ \frac{d}{dt}F(t) \le CF(t).$$
    Then we end up with the energy bound by Gr\"onwall's inequality
    $F(t)\le F(0) e^{Ct}.$
    Equipped with this a priori $W^{2,\infty}$ estimate, standard continuation principle yields a global extension of local classical solutions. 
\end{proof}

The following is a simple prerequisite for Proposition \ref{lem: 10old}.

    \begin{lemma}\label{lem: 4.2}
For any $g_1\in L^1(\mathbb{R}^{2d}),g_2\in L^1(\mathbb{R}^{2dp}),$ the operators $\mathcal{\Tilde{L}}_k^*$ and $\mathcal{\Bar{L}}^*$ defined by \eqref{op:tLk} and \eqref{op:bL} respectively, it holds
    \begin{equation}\label{eq: 4.2.1}
         \int | e^{\tau \mathcal{\Tilde{L}}_k^*}g_1| dz \le  \int | g_1| dz,
    \end{equation}
    \begin{equation}\label{eq: 4.2.2}
        \int \left|  e^{\tau \Bar{\mathcal{L}}^*}  g_2  \right| dz_1 \cdots dz_p \le \int \left|  g_2  \right| dz_1 \cdots dz_p.
    \end{equation}
\end{lemma}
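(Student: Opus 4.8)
The plan is to observe that, whatever the precise definitions \eqref{op:tLk} and \eqref{op:bL} amount to, both $\tilde{\mathcal{L}}_k^*$ and $\bar{\mathcal{L}}^*$ are (adjoint) generators of linear \emph{continuity equations} in divergence form, for which the $L^1$ mass is conserved --- so the claimed inequalities hold, in fact with equality. Concretely, $u(t):=e^{t\tilde{\mathcal{L}}_k^*}g_1$ solves $\partial_t u + v\cdot\nabla_x u + \nabla_v\cdot\big(\xi[\tilde{f}_{t_k}]\,u\big)=0$ on $\mathbb{R}^{2d}$, i.e. $\partial_t u + \nabla_z\cdot(b_k u)=0$ with the \emph{time-independent} phase-space field $b_k(x,v):=\big(v,\ \xi[\tilde{f}_{t_k}](x,v)\big)$; likewise $w(t):=e^{t\bar{\mathcal{L}}^*}g_2$ solves the $p$-particle Liouville equation $\partial_t w+\sum_{i=1}^p\nabla_{x_i}\cdot(v_i w)+\sum_{i=1}^p\nabla_{v_i}\cdot(\xi_i w)=0$ on $\mathbb{R}^{2dp}$, i.e. $\partial_t w+\nabla_Z\cdot(B\,w)=0$ with $B:=\big((v_i,\xi_i)\big)_{i=1}^p$. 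Both fields are locally Lipschitz with at most linear growth: the velocity-block terms $\xi[\tilde{f}_{t_k}]$, $\xi_i$ are bounded by $\psi_M\le 1$ times a first velocity moment, which is finite since $\tilde{f}_{t_k}$ is compactly supported by Lemma \ref{lem: supp}, while the position-block term is the linearly growing velocity variable. Hence the associated phase flow is globally defined and, on the fixed interval $[0,\tau]$, has finite propagation speed, so it maps compactly supported data to compactly supported data.

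Given this structure, I would prove $\|e^{\tau\mathcal{L}^*}g\|_{L^1}=\|g\|_{L^1}$ (which is stronger than \eqref{eq: 4.2.1}--\eqref{eq: 4.2.2}) by a renormalization argument. By density it suffices to take $g$ smooth and compactly supported (for instance $g\in(C^2\cap W^{2,\infty})(\mathbb{R}^{2d})$ compactly supported, cf. Lemma \ref{lem: C2}), so that the solution $f(t,\cdot)$ stays $C^1$ and compactly supported. Writing the equation as $\partial_t f=-\nabla_z\cdot(bf)$ and using $\sgn{f}\,\nabla f=\nabla|f|$ and $\sgn{f}\,f=|f|$ a.e., one gets $\sgn{f}\,\nabla_z\cdot(bf)=b\cdot\nabla_z|f|+(\nabla_z\cdot b)|f|=\nabla_z\cdot(b|f|)$, whence
\[
\frac{d}{dt}\int |f|\,dz=\int \sgn{f}\,\partial_t f\,dz=-\int \nabla_z\cdot(b|f|)\,dz=0
\]
by the divergence theorem, which is legitimate because $b|f|$ is compactly supported. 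The non-smoothness of $\sgn{\cdot}$ is handled in the standard DiPerna--Lions way by first multiplying by $\beta_\varepsilon'(f)$ for a smooth convex approximation $\beta_\varepsilon$ of $|\cdot|$ and then letting $\varepsilon\to 0$, the remaining term $\int(\nabla_z\cdot b)\big(\beta_\varepsilon(f)-f\beta_\varepsilon'(f)\big)\,dz$ vanishing in the limit since $\beta_\varepsilon(f)-f\beta_\varepsilon'(f)\to|f|-f\,\sgn{f}=0$. Thus $\int|f(\tau,z)|\,dz=\int|g(z)|\,dz$, and the bound extends from the dense class to all $g\in L^1$ since $e^{\tau\mathcal{L}^*}$ is an isometry there.

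Equivalently, and perhaps more cleanly, one can bypass the PDE altogether: solving by characteristics gives $e^{\tau\mathcal{L}^*}g=(g\circ\Phi_{-\tau})\,\big|\det D\Phi_{-\tau}\big|$, i.e. the pushforward of the signed measure $g\,dz$ by the phase flow $\Phi_\tau$, and the change-of-variables formula yields $\|e^{\tau\mathcal{L}^*}g\|_{L^1}=\|g\|_{L^1}$ at once, since the total variation of a measure is preserved under pushforward by a diffeomorphism. In either route the only genuinely delicate point is the vanishing of the boundary terms at infinity --- equivalently, that no mass escapes to infinity during time $\tau$ --- which is precisely where the linear growth of $b$ together with the compact-support / flocking bounds (Lemma \ref{lem: supp}, Lemma \ref{ineq: coarseD}) enters; the lack of $C^1$-regularity of $\sgn{\cdot}$ is routine and handled by the $\beta_\varepsilon$ regularization. (Note also that the argument is robust: a dissipative zeroth-order term, were one present, would only contribute a further $\le 0$ term, so the contraction would persist.)
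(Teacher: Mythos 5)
Your proof is correct, but it follows a different route from the paper's. You work directly with signed data: you identify $e^{\tau\mathcal{L}^*}g$ with the solution of a linear continuity equation driven by a (locally Lipschitz, linearly growing, divergence-bounded) time-independent field, and conclude $L^1$-\emph{isometry}, either by a DiPerna--Lions renormalization of $|\cdot|$ or by recognizing the solution as the pushforward of the signed measure $g\,dz$ under the (injective) phase flow, so the positive and negative parts cannot cancel. The paper instead makes the elementary reduction $g_1=g_1^+-g_1^-$: by linearity one may normalize each part to a probability density, for nonnegative data the semigroup output is again a probability density (positivity preservation plus the same one-line mass-conservation computation you use), and the triangle inequality then gives $\int|e^{\tau\tilde{\mathcal{L}}_k^*}g_1|\le\int|g_1^+|+\int|g_1^-|=\int|g_1|$, which is exactly the claimed inequality. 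Your approach buys the sharper statement (equality, hence an isometry, and robustness to an added dissipative zeroth-order term), at the price of the extra technical apparatus you rightly flag: the $\beta_\varepsilon$ regularization of $\sgn{\cdot}$, control of boundary terms via finite propagation on $[0,\tau]$, the a.e.\ Jacobian of a merely bi-Lipschitz flow (recall $\psi$ is only Lipschitz, so $b$ need not be $C^1$ and the solution need not be classical), and the density/extension step identifying the extension with the semigroup on all of $L^1$. The paper's argument sidesteps all of this because only the inequality is needed downstream in Proposition \ref{lem: 10old}; both arguments share the same core ingredient, conservation of mass for the transport semigroup.
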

    \begin{proof} 
    One only needs to consider $g_1\ge 0$ since there is composition $g_1 = g_1^+ -g_1^-.$ Also, since $e^{\tau \mathcal{L}_k^*}cg_1 = ce^{\tau \mathcal{L}_k^*}g_1$ and $e^{\tau \Bar{\mathcal{L}}^*}cg_2 = ce^{\tau \Bar{\mathcal{L}}^*}g_2$ for any constant $c,$ one can set $g_1$ be a probability density function. Then,
    $$\int |e^{\tau \mathcal{\Tilde{L}}_k^*}g_1| dz = \int e^{\tau \mathcal{\Tilde{L}}_k^*}g_1 dz,$$
    since $e^{\tau \mathcal{\Tilde{L}}_k^*}g_1$ is another probability density function derived from $g_1$ by the dynamics \eqref{eq: tempsys}. It's clear that
    $$\int e^{\tau \mathcal{\Tilde{L}}_k^*}g_1 dz = 
    \int\int_0^\tau \frac{\partial}{\partial t} \Tilde{g}_1(t)dtdz + \int g_1 dz = \int g_1dz,$$
    where $\Tilde{g}_1(t)$ denotes the solution of \eqref{eq: mfl for cs} with initial datum $g_1.$ For the general $g_1,$ 
    \begin{equation*}
         \int | e^{\tau \mathcal{\Tilde{L}}_k^*}g_1| dz \le  
         \int | e^{\tau \mathcal{\Tilde{L}}_k^*}g_1^+| dz
         + \int | e^{\tau \mathcal{\Tilde{L}}_k^*}g_1^-| dz
         =  \int | g_1^+| dz + \int | g_1^-| dz =  \int | g_1| dz.
    \end{equation*}
    The proof of \eqref{eq: 4.2.2} is similar.
\end{proof}

    \paragraph{Proof of Proposition \ref{lem: 10old}.}
    It's clear that $\Tilde{S} = e^{\tau \mathcal{\Tilde{L}}_k^*}\Tilde{f}_{t_k},$ where 
    \begin{equation}\label{op:tLk}
        \mathcal{\Tilde{L}}_k^* := -  (v \cdot \nabla_x) \cdot -
    \nabla_v \cdot \left( \int \kappa\psi(|y-x|)(w-v)f_{t_k}(y,w)dydw \cdot \right).
    \end{equation}
    Define the operator of \eqref{the RBM Liouville} as
    \begin{equation}\label{op:bL}
        \Bar{\mathcal{L}}^* := -\sum\limits_{i=1}^p (v_i\nabla_{x_i})\cdot  - \sum\limits_{i=1}^p\nabla_{v_i}\cdot (\xi_i \cdot).
    \end{equation}
    Then one has
    \begin{equation*}
    \begin{aligned}
        \mathcal{Q}_\infty (\Tilde{f}_{t_k}) =& \int e^{\tau\Bar{\mathcal{L}}^*} \Pi_{i=1}^p \Tilde{f}_{t_k}(x_i,v_i)dx_2\cdots dx_pdv_2\cdots dv_p \\
        =&\Tilde{f}_{t_k} (x_1,v_1) + \tau \int \Bar{\mathcal{L}}^* \Pi_{i=1}^p \Tilde{f}_{t_k} (x_i,v_i)dx_2\cdots dx_pdv_2\cdots dv_p \\
        &+ \int_0^\tau (\tau -s) \int (\Bar{\mathcal{L}}^*)^2 e^{s\Bar{\mathcal{L}}^*} \Pi_{i=1}^p \Tilde{f}_{t_k} (x_i,v_i)dx_2\cdots dx_pdv_2\cdots dv_p ds,
    \end{aligned}
    \end{equation*}
    where
    \begin{small}
    \begin{equation*}
        \begin{aligned}
            \int \Bar{\mathcal{L}}^* \Pi_{i=1}^p \Tilde{f}_{t_k} (z_i) dz_2\cdots dz_p 
            = & -\sum\limits_{i=1}^p \int (v_i \nabla_{x_i})\cdot \Pi_{j=1}^p \Tilde{f}_{t_k} (z_j) - \nabla _{v_i} \cdot (\xi_i \Pi_{j=1}^p)\Tilde{f}_{t_k} (z_j)dz_2\cdots dz_p)\\
            = & - (v_1 \nabla_{x_1}) \cdot \Tilde{f}_{t_k} (z_1) - \nabla_{v_1} \cdot \int \kappa \psi (|y-x_1|)(w-v_1) \Tilde{f}_{t_k} (y,w) dydw.
        \end{aligned}
    \end{equation*}
    \end{small}
    Note that $\Tilde{S}(\Tilde{f}_{t_k}) = \Tilde{f}_{t_k} + \tau \Tilde{\mathcal{L}}_k^*\Tilde{f}_{t_k} + \int_0^\tau (\tau -s)(\Tilde{\mathcal{L}}_k^*)^2 e^{s\Tilde{\mathcal{L}}_k^*} \Tilde{f}_{t_k} ds,$ then
    \begin{small}
        \begin{equation*}
        \left| \Tilde{S}(\Tilde{f}_{t_k}) - \mathcal{Q}_\infty (\Tilde{f}_{t_k}) \right|(z_1)\le \int _0^\tau (\tau-s) \left[ \left|(\Tilde{\mathcal{L}}_k^*)^2 e^{s\Tilde{\mathcal{L}}_k^*} \Tilde{f}_{t_k}(z_1)\right|  +  \left| \int (\mathcal{\Bar{L}}^*)^2 e^{s\Bar{\mathcal{L}}^*}\Pi_{i=1}^p \Tilde{f}_{t_k} (z_i) dz_2\cdots dz_p \right| \right]ds.
    \end{equation*}
    \end{small}
    As shown in Lemma \ref{lem: C2}, $\Tilde{f}_{t_k} \in C^2(z),$ hence it holds 
    $$\int |(\Tilde{\mathcal{L}}_k^*)^2 \Tilde{f}_{t_k}| dz\le C ,\quad \int \left| (\mathcal{\Bar{L}}^*)^2 \Pi _{i=1}^p \Tilde{f}_{t_k} (z_i)\right| dz_2 \cdots dz_p\le C,$$ 
    where $C=C(\kappa,\epsilon_\psi ,R,d).$ By Lemma \ref{lem: 4.2}, taking $g_1 = (\Tilde{\mathcal{L}}_k^*)^2  \Tilde{f}_{t_k}$ and $g_2 = (\mathcal{\Bar{L}}^*)^2 \Pi _{i=1}^p \Tilde{f}_{t_k} (z_i),$ one has
    \begin{equation*}
        \int |(\mathcal{\Tilde{L}}_k^*)^2 e^{\tau \mathcal{\Tilde{L}}_k^*}\Tilde{f}_{t_k}| dz \le  \int |(\mathcal{\Tilde{L}}_k^*)^2 \Tilde{f}_{t_k}| dz,
    \end{equation*}
    and
    \begin{equation*}
        \int \left| (\mathcal{\Bar{L}}^*)^2 e^{\tau \Bar{\mathcal{L}}^*} \Pi _{i=1}^p \Tilde{f}_{t_k} (z_i)\right| dz_1 \cdots dz_p \le \int \left| (\mathcal{\Bar{L}}^*)^2 \Pi _{i=1}^p \Tilde{f}_{t_k} (z_i)\right| dz_1 \cdots dz_p,
    \end{equation*}
    Using the exchangability of $\mathcal{L}^*$ and $e^{\tau \mathcal{L}^*},$ with $\mathcal{L}^*=\mathcal{\Tilde{L}}_k^*$ or $\mathcal{\Bar{L}}^*.$ 
    Then one has $ |\Tilde{S}(\Tilde{f}_{t_k}) - \mathcal{Q}_\infty (\Tilde{f}_{t_k})|_{TV}  \le C \tau^2.$ 
    
    For $\Tilde{S}(\Tilde{f}_{t_k}),$ by Lemma \ref{lem: suppS}, it holds
    \begin{equation*}
        \text{supp}[\Tilde{S}_\infty(\Tilde{f}_{t_k})]\subset
        B(0,C)\times B(0,C e^{-\kappa\psi_0 t}).
    \end{equation*}

    For the support of $\mathcal{Q}_\infty(\Tilde{f}_{t_k}),$ by Lemma \ref{lem: supp}, one knows that 
    \begin{equation*}
        \text{supp}[\mathcal{Q}_\infty(\Tilde{f}_{t_k})]\subset
        B(0,C)\times B(0,C e^{-\kappa\psi_0 t_k}).
    \end{equation*}
    
    Hence for any $z \in \text{supp}|\Tilde{S}(\Tilde{f}_{t_k}) - \Tilde{f}_{t_k}|,$ it holds $|z|\le C.$
    Then, by Lemma \ref{lem: TV} and applying similar analysis on Lemma \ref{lem: lmmG} Step 1, one has 
    $$ W_q (\mathcal{Q}_\infty (\Tilde{f}_{t_k}),\Tilde{S}(\Tilde{f}_{t_k})) \le C(\kappa,\epsilon_\psi , f_0,d) \tau^{\frac{2}{q}}.$$

\qed

\end{appendix}

\bibliographystyle{plain}
\bibliography{main}

\end{document}